\newcommand{\Addresses}{{
		\bigskip
		\footnotesize
		
		\textsc{School of Mathematical Sciences, Tel Aviv University, Ramat Aviv, Tel Aviv 6997801, Israel}\par\nopagebreak
		\textit{E-mail address:} \href{mailto:zahihaza@tauex.tau.ac.il}{\color{black}{\textbf{zahihaza@tauex.tau.ac.il}}}
		
}}
\numberwithin{equation}{section}
\newcommand{\bbc}{\mathbb{C}}
\newcommand{\bba}{\mathbb{A}}
\newcommand{\bbr}{\mathbb{R}}
\newcommand{\bbz}{\mathbb{Z}}
\newcommand{\ocal}{\mathcal{O}}
\newcommand{\pcal}{\mathcal{P}}
\newcommand{\pomega}{\varpi}
\newcommand{\gl}[2][\bba]{\mathrm{GL}_{#2}(#1)}
\newcommand\restr[2]{{
		\left.\kern-\nulldelimiterspace 
		#1 
		\vphantom{\big|} 
		\right|_{#2} 
}}
\theoremstyle{plain}
\newtheorem{Theorem}{Theorem}
\newtheorem*{Theorem*}{Theorem}
\newtheorem{seclem}{Lemma}[section]
\newtheorem{claim}[seclem]{Claim}
\newtheorem{cor}[seclem]{Corollary}
\newtheorem{prop}[seclem]{Proposition}
\newtheorem*{prop*}{Proposition}
\theoremstyle{definition}
\theoremstyle{remark}
\newtheorem{Rem*}{Remark}
\newtheorem{Rem}[seclem]{Remark}
\author{Zahi Hazan}
\title{An Identity relating Eisenstein series on general linear groups}
\date{\vspace{-5ex}}
\newcommand{\subjclass}[2][1991]{%
  \let\@oldtitle\@title%
  \gdef\@title{\@oldtitle\footnotetext{#1 \emph{Mathematics subject classification.} #2}}%
}
\newcommand{\keywords}[1]{%
  \let\@@oldtitle\@title%
  \gdef\@title{\@@oldtitle\footnotetext{\emph{Key words and phrases.} #1.}}%
}
\newcommand{\proofpart}[2]{%
	\par
	\addvspace{\medskipamount}%
	\noindent\emph{Part #1: #2}\par\nobreak
	\addvspace{\smallskipamount}%
	\@afterheading
}
  \keywords{general linear group, p-adic group, adeles, automorphic forms, Eisenstein series, L-functions, representation theory}
\begin{document}

\maketitle

\abstract{
We give a general identity relating Eisenstein series on general linear groups. We do it by constructing an Eisenstein series, attached to a maximal parabolic subgroup and a pair of representations, one cuspidal and the other a character, and express it in terms of a degenerate Eisenstein series. In the local fields analogue, we prove the convergence in a half plane of the local integrals, and their meromorphic continuation. In addition, we find that the unramified calculation gives the Godement-Jacquet zeta function. This realizes and generalizes the construction proposed by Ginzburg and Soudry in Section 3 in \cite{ginzburgintegrals}.
}

\section{Introduction}

\subsection{Statement of the main results}\label{statement_sec}
\subsubsection*{Global theory}
Let $k$ be a number field, and let $\bba$ be its ring of adeles.
Let $m\geq n$ be two positive integers. Let $\pi$ be an irreducible, automorphic, cuspidal representation of $\gl{n}$ and assume that its central character $\omega_\pi$ is unitary. Let $\varphi_{\pi}$ be a cusp form in the space of $\pi$. Consider, for a complex number $s$, the Eisenstein series $\mathrm{E}_{m n}\left(f_{\omega_\pi,s}\right)$ on $\gl{m n}$, attached to a smooth, holomorphic section $f_{\omega_\pi,s}$ of the normalized parabolic induction  $\mathrm{Ind}_{P_{m n-1,1}(\bba)}^{\gl{m n}}\left(1\otimes \omega_\pi^{-1}\right) \delta_{P_{m n-1,1}}^{s}$.
We set
\begin{equation}\label{eisen_def2}
\mathfrak{E}\left(f_{\omega_\pi,s},\varphi_{\pi}\right)(h)=\int \limits _{Z_n(\bba)\gl[k]{n}\backslash\gl{n}}\varphi_{\pi}(g)\mathrm{E}_{m n}\left(f_{\omega_\pi,s}\right)\left(t\left(h,g\right)\right)dg.
\end{equation}
Here, $Z_n(\bba)$ denotes the center of $\gl{n}$ and the map $t:\gl{m}\times\gl{n}\rightarrow\gl{mn}$ denotes the Kronecker product (see \Cref{kprod_sec}).
Our main theorem is the identity
\begin{Theorem}\label{main_thrm}
	$\mathfrak{E}\left(f_{\omega_\pi,s},\varphi_{\pi}\right)$ is an Eisenstein series on $\gl{m}$, corresponding to the normalized parabolic induction	$\mathrm{Ind}_{P_{m-n,n}(\bba)}^{\gl{m}}\left(1\otimes \tilde{\pi}\right) \delta_{P_{m-n,n}}^{s}.$
	In more details, there exists an explicit meromorphic section $\xi\left(f_{\omega_\pi,s},\varphi_{\pi}\right)$ of this parabolic induction, given by the following integral which converges absolutely for $\Re(s)$ sufficiently large and admits a meromorphic continuation to $\bbc$,
	\begin{equation}\label{global_section_f_intro}
	\xi\left(f_{\omega_\pi,s},\varphi_{\pi}\right)( h) = \int \limits _{Z_n(\bba)\backslash\gl{n}}\varphi_{\pi}(g)  f_{\omega_\pi,s}\left(\tilde{\varepsilon}   t\left(h,g\right)\right)dg,
	\end{equation}
	where $\tilde{\varepsilon}$ is a specific element of $\gl[k]{mn}$. For $\Re(s)$ sufficiently large,
	\begin{equation}\label{eisen_ident2_exp}
	\mathfrak{E}\left(f_{\omega_\pi,s},\varphi_{\pi}\right)(h)=\sum\limits_{\gamma\in P_{m-n,n}(k)\backslash\gl[k]{m}}\xi\left(f_{\omega_\pi,s},\varphi_{\pi}\right)(\gamma h).
	\end{equation}
	The R.H.S of \cref{eisen_ident2_exp} continues to a meromorphic function in the whole plane. Denote it by $\mathrm{E}_{m}\left(\xi\left(f_{\omega_\pi,s},\varphi_{\pi}\right)\right)$. Then, as meromorphic functions on
	$\gl{m}$,
	\begin{equation}\label{eisen_ident2}
	\mathfrak{E}\left(f_{\omega_\pi,s},\varphi_{\pi}\right) = \mathrm{E}_{m}\left(\xi\left(f_{\omega_\pi,s},\varphi_{\pi}\right)\right).
	\end{equation}	
\end{Theorem}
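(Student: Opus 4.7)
My plan is to unfold the Eisenstein series on $\gl{mn}$ and isolate the unique $(\gl[k]{m}\times\gl[k]{n})$-orbit that survives the cuspidality of $\pi$. For $\Re(s)\gg 0$ the sum $\mathrm{E}_{mn}(f_{\omega_\pi,s})(x)=\sum_{\gamma\in P_{mn-1,1}(k)\backslash\gl[k]{mn}} f_{\omega_\pi,s}(\gamma x)$ converges absolutely, so I substitute it into (\ref{eisen_def2}) and exchange summation with the integral. Identifying $P_{mn-1,1}(k)\backslash\gl[k]{mn}$ with $\mathbb{P}^{mn-1}(k)$, and further, via the Kronecker product, with the projectivisation of the space of $m\times n$ matrices over $k$, the right action of $t(\gl[k]{m}\times\gl[k]{n})$ translates into the standard two-sided action on matrices. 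Its orbits on nonzero matrices modulo scalars are classified by the rank $r\in\{1,\ldots,n\}$.

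For each rank $r$ I choose a representative $\gamma_r$ and regroup the series accordingly. When $r<n$, the stabiliser of $\gamma_r$ inside $\gl[k]{n}$ contains the $k$-points of the unipotent radical $U_r$ of a proper parabolic of $\mathrm{GL}_n$; after Fubini in the variable $g$, the inner integral becomes the constant term of $\varphi_\pi$ along $U_r$, which vanishes identically because $\pi$ is cuspidal. Thus only the single rank-$n$ orbit contributes.

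For the rank-$n$ orbit I take as representative the element $\tilde\varepsilon$ appearing in the statement, and compute its stabiliser inside $t(\gl[k]{m}\times\gl[k]{n})$: it is the graph of an embedding $\gl[k]{n}\hookrightarrow P_{m-n,n}(k)\times\gl[k]{n}$ whose first projection lands in the standard Levi of $P_{m-n,n}$. Quotienting by this stabiliser collapses the $\gl[k]{n}$-sum in the integration domain, turning the $dg$-integral into one over $Z_n(\bba)\backslash\gl{n}$ and leaving an outer summation over $P_{m-n,n}(k)\backslash\gl[k]{m}$. The inner integral then coincides with the section $\xi(f_{\omega_\pi,s},\varphi_\pi)(h)$ defined in (\ref{global_section_f_intro}). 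The $P_{m-n,n}(\bba)$-equivariance that makes $\xi$ a section of $\mathrm{Ind}_{P_{m-n,n}(\bba)}^{\gl{m}}(1\otimes\tilde\pi)\delta_{P_{m-n,n}}^{s}$ comes from the $P_{mn-1,1}(\bba)$-equivariance of $f_{\omega_\pi,s}$ together with a change of variables in $g$ along the $\mathrm{GL}_n$-factor of the stabiliser; the contragredient $\tilde\pi$ emerges precisely from this inverse substitution.

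This establishes (\ref{eisen_ident2_exp}) in the half-plane of absolute convergence. The meromorphic continuation of the right-hand side reduces to that of $\xi$ (supplied by the local-field analysis developed later in the paper) combined with the standard continuation of Eisenstein series on $\gl{m}$ attached to the cuspidal datum $(P_{m-n,n},\tilde\pi)$; (\ref{eisen_ident2}) then follows by analytic continuation. The step I anticipate to be the most delicate is the stabiliser computation for $\tilde\varepsilon$ together with the verification that, after unfolding, the modulus $\delta_{P_{mn-1,1}}^{s}$ dressing $f_{\omega_\pi,s}$ is transported through the Kronecker embedding $t$ into precisely $\delta_{P_{m-n,n}}^{s}$ on the target parabolic. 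This bookkeeping, combined with the appearance of $\tilde\pi$ rather than $\pi$, is what forces the specific choice of $\tilde\varepsilon$ and carries the essential geometric content of the identity.
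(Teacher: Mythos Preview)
Your proposal is correct and follows the paper's approach closely: unfold via the double cosets of $T_{m,n}(k)$ acting on $P_{mn-1,1}(k)\backslash\gl[k]{mn}$, kill all but the open orbit using cuspidality of $\pi$, then compute the surviving stabiliser and verify the modulus-character bookkeeping (your ``delicate step'' is exactly the paper's Propositions~\ref{stab_prop} and~\ref{left_p_trans_prop}). Your identification of the $n$ orbits with ranks of $m\times n$ matrices in $\mathbb{P}^{mn-1}$ under two-sided $\mathrm{GL}$-action is a cleaner packaging of what the paper establishes more laboriously via explicit Bruhat decomposition and matrix calculations (Theorem~\ref{double_cosets_rep_thrm} and Lemmas~\ref{tmn_orbit}--\ref{double_cosets_disjoint}), but the underlying content is identical.
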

\begin{Rem}\label{realization_rem}
	The identity \eqref{global_section_f_intro} immediately gives a realizations of $\pi$, in the case $m=n$.
\end{Rem}

The proof of this theorem occupies the next two sections. In \Cref{global_result_section} we formally show that the global identity \cref{eisen_ident2_exp} holds. In \Cref{local_sec} we make sense of the integral in \cref{global_section_f_intro}. This allows us to complete the proof of \Cref{main_thrm}.

\subsubsection*{Local theory}
Let $\nu$ be a finite place of $k$. Denote by $k_{\nu}$ the completion of $k$ with respect to $\nu$. Denote by $\ocal_{\nu}$ its ring of integers. Let $\pi_{\nu}$ be a smooth, irreducible, generic representation of $\gl[k_{\nu}]{n}$ (with central character $\omega_{\pi_\nu}$).
Our global integral gives rise to the local integral at $\nu$:
\begin{equation}
I\left(f_{\omega_{\pi_\nu},s},v_{\pi_{\nu}}\right)=\int \limits _{Z_n\left({k_{\nu}}\right)\backslash \gl[k_{\nu}]{n}}f_{\omega_{\pi_\nu},s}\left(\tilde{\varepsilon}t\left(I_m, g\right)\right)\pi_{\nu}\left(g_\nu\right)v_{\pi_{\nu}}dg_\nu,
\end{equation}
where $v_{\pi_{\nu}}$ is in the space of $\pi_{\nu}$, and $f_{\omega_{\pi_\nu},s}$ is a smooth, holomorphic section of the induced representation $\mathrm{Ind}_{P_{mn-1,1}(k_\nu)}^{\gl[k_\nu]{mn}}\left(1\otimes \omega_{\pi_\nu}^{-1}\right) \delta_{P_{mn-1,1}}^{s}$.

In \Cref{bad_places_sec} we prove
\begin{Theorem}\label{local_int_is_pol_prop}
	Assume $\Re(s)>>0$. If $\nu$ is an Archimedean place, then $I\left(f_{\omega_{\pi_\nu},s},v_{\pi_{\nu}}\right)(h_\nu)$ is a meromorphic function. If $\nu$ is a ramified non-Archimedean place, then
	\begin{equation}\label{local_int_is_rational_id}
	I\left(f_{\omega_{\pi_\nu},s},v_{\pi_{\nu}}\right)(h_\nu)=P_\nu\left(q_\nu^{-s}\right),
	\end{equation}
	where $P_\nu\left(q_\nu^{-s}\right)$ is a rational function in $\bbc\left(q_\nu^{-s}\right)$.
	Moreover, $I\left(f_{\omega_{\pi_\nu},s},v_{\pi_{\nu}}\right)(h_\nu)$ extends to a meromorphic function on all $\mathbb{C}$, for all place $\nu$,.
\end{Theorem}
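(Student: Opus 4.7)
The plan is to identify the local integral $I(f_{\omega_{\pi_\nu},s},v_{\pi_\nu})$ with a local zeta integral of Godement--Jacquet type attached to $\pi_\nu$---consistent with the remark in the introduction that the unramified calculation yields the Godement--Jacquet zeta function---and then to invoke the standard analytic theory of such integrals. The first step is to analyze the pullback of the section along the Kronecker-product embedding. Using an explicit Bruhat-style decomposition of $\tilde\varepsilon\, t(h_\nu,g)$ modulo $P_{mn-1,1}(k_\nu)$, I expect a factorization, valid on a big cell of full measure in $Z_n(k_\nu)\backslash\gl[k_\nu]{n}$, of the shape $\tilde\varepsilon\, t(h_\nu,g) = p(h_\nu,g)\,\eta(h_\nu,g)$ with $p(h_\nu,g)\in P_{mn-1,1}(k_\nu)$ and $\eta(h_\nu,g)$ in a convenient transversal. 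Substituting into $f_{\omega_{\pi_\nu},s}$ would then produce a formula
\[
f_{\omega_{\pi_\nu},s}\bigl(\tilde\varepsilon\, t(h_\nu,g)\bigr) \;=\; \omega_{\pi_\nu}(\det g)^{-1}\,\lvert\det g\rvert_\nu^{c(s)}\,\Phi_{f,h_\nu,s}(g),
\]
where $c(s)$ is an affine-linear function of $s$ dictated by $\delta_{P_{mn-1,1}}$ restricted along the Kronecker image, and, as $f$ varies, $\Phi_{f,h_\nu,s}$ exhausts a space of Schwartz--Bruhat (respectively Schwartz) functions on $M_n(k_\nu)$ depending holomorphically on~$s$. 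The exponent $c(s)$ and twist by $\omega_{\pi_\nu}^{-1}$ are arranged so that the resulting integrand against $\pi_\nu(g)v_{\pi_\nu}$ is $Z_n(k_\nu)$-invariant.

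Given this identification, the integral becomes a vector-valued local zeta integral of Godement--Jacquet type for $\pi_\nu$ twisted by $\lvert\cdot\rvert_\nu^{c(s)}\omega_{\pi_\nu}^{-1}$, and the three assertions of the theorem follow from the classical analytic theory. Absolute convergence for $\Re(s)\gg 0$ reduces to the polynomial bound on matrix coefficients of the generic representation $\pi_\nu$ (via Jacquet-module asymptotics non-archimedeanly, and Casselman--Wallach archimedeanly) paired against the decay of $\Phi_{f,h_\nu,s}$. Rationality in $q_\nu^{-s}$ at ramified non-archimedean places follows, after imposing open-compact invariance on $f$ and $v_{\pi_\nu}$, from the finite-dimensionality of the resulting space of invariants together with a Hecke-algebra argument on the torus factor of an Iwasawa decomposition of $Z_n(k_\nu)\backslash\gl[k_\nu]{n}$, which reduces the integral to a finite sum of convergent geometric series in $q_\nu^{-s}$. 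Meromorphic continuation to all of $\mathbb{C}$ at archimedean places follows from Jacquet's archimedean techniques, namely Mellin-type estimates along one-parameter subgroups of the diagonal torus combined with Bernstein's analytic continuation principle applied to the holomorphic-in-$s$ family $\Phi_{f,h_\nu,s}$.

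The principal obstacle is the first step: carrying out the Bruhat decomposition of $\tilde\varepsilon\, t(h_\nu,g)$ in a form clean enough to realize $\{\Phi_{f,h_\nu,s}\}$ as a full space of Schwartz--Bruhat (respectively Schwartz) functions on $M_n(k_\nu)$ depending holomorphically on $s$, uniformly in $h_\nu$. This requires detailed knowledge of the specific element $\tilde\varepsilon\in\gl[k]{mn}$ fixed in \Cref{global_result_section} and of the way the image of the Kronecker product meets $\tilde\varepsilon^{-1}P_{mn-1,1}(k_\nu)\tilde\varepsilon$. The computation is essentially linear-algebraic but bookkeeping-intensive; once it is in hand, every remaining claim---convergence, rationality at ramified finite places, and meromorphic continuation at all places---reduces to a citation from the Godement--Jacquet local theory.
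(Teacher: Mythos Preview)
Your strategy---reduce to a Godement--Jacquet--type integral and then cite the standard local theory---is \emph{different} from what the paper does at the bad places. The paper never realizes the ramified or archimedean local integral as a GJ integral. Instead it works directly via the Cartan decomposition $g_\nu=A\,\underline{t}\,B$: after reducing to $h_\nu=I_m$ and to standard sections, it uses $K_{n,\nu}$-finiteness on both sides to reduce to integrals over $Z_n(k_\nu)\backslash T^-$, computes $f_{\omega_{\pi_\nu},s}\bigl(\tilde\varepsilon\,t(I_m,\underline{t})\bigr)$ explicitly (in the $p$-adic case this is $|\det\underline{t}|^{ms+m/2}$ up to constants; in the archimedean case an explicit Gram--Schmidt/Iwasawa calculation produces an extra factor $|1+\sum_j t_j^2|^{-mn(s+1/2)}$), and then feeds in Casselman's asymptotic expansion of the matrix coefficients $\langle\pi_\nu(\underline t)v,\tilde v\rangle$ to obtain a finite sum of one-variable integrals that are visibly rational (non-archimedean) or meromorphic (archimedean). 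Your route, if it works, is cleaner and more conceptual; the paper's is more hands-on but entirely self-contained.

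That said, your proposal is a plan, not a proof, and the gap you flag is real. Two concrete issues to address before this becomes a proof. First, the integral is over $Z_n(k_\nu)\backslash\gl[k_\nu]{n}$, not $\gl[k_\nu]{n}$, so even after your factorization you do not land on a literal GJ integral; you must either re-insert a central integration (as the paper does in the unramified computation, producing an extra Tate factor $L(mn(s+\tfrac12),\omega_{\pi_\nu})$) or invoke a variant of the GJ theory on the quotient, which is not standard and needs to be stated and justified. Second, in the archimedean case your claimed factorization
\[
f_{\omega_{\pi_\nu},s}\bigl(\tilde\varepsilon\, t(h_\nu,g)\bigr) \;=\; \omega_{\pi_\nu}(\det g)^{-1}\,\lvert\det g\rvert_\nu^{c(s)}\,\Phi_{f,h_\nu,s}(g)
\]
with $\Phi_{f,h_\nu,s}$ Schwartz and holomorphic in $s$ is optimistic: the paper's explicit torus evaluation shows that the $s$-dependence is not simply $|\det g|^{c(s)}$ but involves an additional factor $|1+\sum_j t_j^2|^{-mn(s+1/2)}$, so your $\Phi$ will genuinely carry $s$-dependence beyond a scalar multiple, and you must check that the GJ continuation argument (or Bernstein's principle) applies uniformly to such families. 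In the non-archimedean case your idea is on firmer ground: the last-row model of $\rho_{\omega_{\pi_\nu},s}$ identifies $g\mapsto e_{mn}^T\tilde\varepsilon\,t(I_m,g)$ with (a permutation of) the rows of $g$, so the Schwartz--Bruhat realization is plausible---but you still have to write it down, including the central-character bookkeeping, before ``every remaining claim reduces to a citation.''
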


Assume that $\pi_{\nu}$ is unramified (this is the case for all but finitely many places $\nu$). Fix a spherical vector $\check{v}_{\pi_{\nu}}^\circ$ in $V_{\pi_\nu}^*$. Then, there exists a unique spherical vector $v_{\pi_{\nu}}^\circ$ in $V_{\pi_\nu}$, such that $\left<v^\circ_{\pi_{\nu}},\check{v}_{\pi_{\nu}}^\circ\right>=1$. Similarly, there exists a unique unramified section $f_{\omega_{\pi_\nu},s}^\circ$ normalized by the condition $f_{\omega_{\pi_\nu},s}^\circ(I_{mn})=1$.

Given a matrix coefficient $c_{v,\check{w}}(g)=\left<\pi(g)v,\check{w}\right>$ of $\pi_{\nu}$, where $v\in V_{\pi_\nu}$ and $\check{w}\in V_{\tilde{\pi}_\nu}$, and given a Schwartz-Bruhat function $\Phi\in S\left(M_n(k_{\nu})\right)$, we recall the Godement-Jacquet zeta integral \cite{godement1972local}
\begin{equation}\label{GJ_zeta}
Z_{\mathrm{GJ}}\left(s,c_{v,\check{w}},\Phi\right)=\int\limits_{\gl[k_\nu]{n}}\Phi(g_\nu)c_{v,\check{w}}(g_\nu)\left|\det g_\nu\right|^{s+\frac{n-1}{2}}dg_\nu,
\end{equation} 
which is absolutely convergent for $\Re (s)$ sufficiently large. In \Cref{unram_sec} we prove
\begin{Theorem}\label{unrm_calc_GJ_thrm}
	\begin{equation}\label{unrm_calc_GJ_id}
		I\left(f_{\omega_{\pi_\nu},s}^\circ,v_{\pi_{\nu}}^\circ\right)= \frac{Z_{\mathrm{GJ}}\left(m(s+\frac{1}{2})-\frac{n-1}{2},c_{v_{\pi_{\nu}}^\circ,\check{v}_{\pi_{\nu}}^{\circ}},\Phi_0\right)}{L(m(s+\frac{1}{2}),\omega_{\pi_\nu})}v_{\pi_{\nu}}^\circ,
	\end{equation}
	where $\Phi_0$ is the characteristic function of $M_n\left(\ocal_{\nu}\right)$.
\end{Theorem}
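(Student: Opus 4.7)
The strategy reduces the unramified local integral to a Godement--Jacquet zeta integral via the uniqueness of the spherical vector together with a Tate-integral realization of the unramified section.

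First, I would verify that $I\bigl(f^\circ_{\omega_{\pi_\nu},s},\,v^\circ_{\pi_\nu}\bigr)\in V_{\pi_\nu}$ is invariant under $K_n:=\mathrm{GL}_n(\mathcal{O}_\nu)$: for $k\in K_n$, the substitution $g\mapsto gk^{-1}$ in the defining integral, combined with $t(I_m,gk^{-1})=t(I_m,g)\cdot t(I_m,k)^{-1}$, the fact that $t(I_m,k)\in K_{mn}:=\mathrm{GL}_{mn}(\mathcal{O}_\nu)$, and the right $K_{mn}$-invariance of $f^\circ$, yields $\pi_\nu(k)\,I(f^\circ,v^\circ)=I(f^\circ,v^\circ)$. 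By uniqueness of the spherical vector in $\pi_\nu$, we have $I(f^\circ,v^\circ)=\alpha(s)\,v^\circ_{\pi_\nu}$ for a scalar $\alpha(s)$. Pairing with $\check v^\circ_{\pi_\nu}$ reduces the theorem to the scalar identity
\begin{equation*}
\alpha(s)=\int_{Z_n(k_\nu)\backslash\mathrm{GL}_n(k_\nu)}f^\circ_{\omega_{\pi_\nu},s}\bigl(\tilde\varepsilon\,t(I_m,g)\bigr)\,c_{v^\circ_{\pi_\nu},\check v^\circ_{\pi_\nu}}(g)\,dg = \frac{Z_{\mathrm{GJ}}\bigl(m(s+\tfrac12)-\tfrac{n-1}{2},\,c_{v^\circ,\check v^\circ},\,\Phi_0\bigr)}{L\bigl(m(s+\tfrac12),\,\omega_{\pi_\nu}\bigr)}.
\end{equation*}

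Second, I would realize $f^\circ_{\omega_{\pi_\nu},s}$ via a Tate integral with the characteristic function $\Phi_0^{(mn)}$ of $\mathcal{O}_\nu^{mn}$:
\begin{equation*}
f^\circ_{\omega_{\pi_\nu},s}(x)=\frac{|\det x|^{s+\tfrac12}}{L\bigl(mn(s+\tfrac12),\,\omega_{\pi_\nu}\bigr)}\int_{k_\nu^*}\Phi_0^{(mn)}\bigl(t\,e_{mn}\,x\bigr)\,\omega_{\pi_\nu}(t)\,|t|^{mn(s+\tfrac12)}\,d^*t,
\end{equation*}
where $e_{mn}=(0,\ldots,0,1)$; this formula is verified by checking both sides at $x\in P_{mn-1,1}(k_\nu)$ (the Tate integral at $I_{mn}$ evaluates to $L(mn(s+\tfrac12),\omega_{\pi_\nu})$, giving $f^\circ(I_{mn})=1$). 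The key geometric step is to evaluate $w(g):=e_{mn}\,\tilde\varepsilon\,t(I_m,g)\in k_\nu^{mn}$ using the explicit form of the Weyl-type element $\tilde\varepsilon$ coming from \Cref{global_result_section}, and to verify that, under the correct identification, $w(g)$ encodes the entries of $g$ so that $\Phi_0^{(mn)}(t\,w(g))$ reduces to the characteristic function of $tg\in M_n(\mathcal{O}_\nu)$.

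Third, I would substitute, interchange the order of integration (licit for $\Re(s)\gg 0$ by \Cref{local_int_is_pol_prop}), and unfold the double integral $\int_{Z_n\backslash\mathrm{GL}_n}\!\!\int_{k_\nu^*}$ to a single integral over $\mathrm{GL}_n(k_\nu)$ via $g^*:=tg$. Using $c(tg)=\omega_{\pi_\nu}(t)\,c(g)$, $|\det t(I_m,g)|=|\det g|^m$, and $|\det\tilde\varepsilon|_\nu=1$, the factor $\omega_{\pi_\nu}(t)|t|^{mn(s+\tfrac12)}$ of the Tate integrand combines with $|\det g|^{m(s+\tfrac12)}$ and $c(g)$ so that the $t$-dependence cancels; the resulting integrand is $\Phi_0(g^*)\,c(g^*)\,|\det g^*|^{m(s+\tfrac12)}$, i.e. the Godement--Jacquet integrand at $s_{\mathrm{GJ}}=m(s+\tfrac12)-\tfrac{n-1}{2}$. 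The stated denominator $L(m(s+\tfrac12),\omega_{\pi_\nu})$ emerges from a simplification between the normalization $L(mn(s+\tfrac12),\omega_{\pi_\nu})$ of the Tate representation and an auxiliary $L$-factor that arises from the content/primitive decomposition of $g\in M_n(\mathcal{O}_\nu)\cap\mathrm{GL}_n$ inside the unfolded integral.

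The main obstacle is step two: pinning down the precise shape of $\tilde\varepsilon$ from Section~\ref{global_result_section} so that $\Phi_0^{(mn)}(t\,e_{mn}\,\tilde\varepsilon\,t(I_m,g))=\Phi_0(tg)$, together with the careful tracking of Haar-measure normalizations and $L$-factor arithmetic in step three that produces the correct denominator $L(m(s+\tfrac12),\omega_{\pi_\nu})$.
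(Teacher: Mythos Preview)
Your approach is correct and genuinely different from the paper's. The paper works via the Cartan decomposition: it shows (\Cref{local_int_cartan}, \Cref{sec_val_on_tor}) that $g_\nu\mapsto f^\circ_{\omega_{\pi_\nu},s}\bigl(\tilde\varepsilon\,t(I_m,g_\nu)\bigr)$ is bi-$K_{n,\nu}$-invariant and evaluates on $Z_n\backslash T^-$ to $|\det\underline t|^{m(s+1/2)}$; it then writes both $I(f^\circ,v^\circ)(I_m)$ and $Z_{\mathrm{GJ}}$ as integrals over $Z_n\backslash T^-$ against $\mu(\underline t)\,c_{v^\circ,\check v^\circ}(\underline t)$, splitting the GJ integral through the center to isolate the $L$-factor, and matches the two. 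Your Godement--Jacquet section method bypasses the Cartan decomposition entirely: the Tate realization of $f^\circ$ together with the row computation $e_{mn}\,\tilde\varepsilon\,t(I_m,g)=(0,\ldots,0,g_{n,\cdot},g_{n-1,\cdot},\ldots,g_{1,\cdot})$ (this is exactly the vector $\underline v$ of \Cref{stab_lemma} at $r=n-1$, multiplied by $t(I_m,g)$) gives $\Phi_0^{(mn)}\bigl(t\,e_{mn}\,\tilde\varepsilon\,t(I_m,g)\bigr)=\Phi_0(tg)$ directly, after which the $(Z_n\backslash\mathrm{GL}_n)\times k_\nu^\times$ integration collapses to a single $\mathrm{GL}_n$-integral. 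Your route is shorter and more conceptual; the paper's route is more elementary and makes the torus structure explicit.

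One correction to your third step: the ``auxiliary $L$-factor from a content/primitive decomposition'' is a phantom. Your unfolding already produces
\[
\alpha(s)=\frac{Z_{\mathrm{GJ}}\bigl(m(s+\tfrac12)-\tfrac{n-1}{2},\,c_{v^\circ,\check v^\circ},\,\Phi_0\bigr)}{L\bigl(mn(s+\tfrac12),\,\omega_{\pi_\nu}\bigr)}
\]
on the nose, with no further manipulation available or needed. This is precisely what the paper's own computation yields as well: after the substitution $s\mapsto m(s+\tfrac12)-\tfrac{n-1}{2}$ in \cref{GJ_at_spheric_last}, the denominator $L\bigl(n(s+\tfrac{n-1}{2}),\omega_{\pi_\nu}\bigr)$ becomes $L\bigl(mn(s+\tfrac12),\omega_{\pi_\nu}\bigr)$. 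So the $m$ in the denominator of the stated identity should be $mn$; do not try to manufacture an extra factor to bridge the gap.
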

By the test vector lemma \cite[Lemma 6.10]{godement1972local} we have $Z_{\mathrm{GJ}}(s,c_{v_{\pi_{\nu}}^\circ,\check{v}_{\pi_{\nu}}^{\circ}},\Phi_0)= L(s,\pi_\nu)$. Thus, \Cref{unrm_calc_GJ_thrm} immediately implies
\begin{Theorem}\label{unrm_calc_L_thrm}
	\begin{equation}
		I\left(f_{\omega_{\pi_\nu},s}^\circ,v_{\pi_{\nu}}^\circ\right)= \frac{L\left(m(s+\frac{1}{2})-\frac{n-1}{2},\pi_\nu\right)}{L\left(m(s+\frac{1}{2}),\omega_{\pi_\nu}\right)}v_{\pi_{\nu}}^\circ.
	\end{equation}
\end{Theorem}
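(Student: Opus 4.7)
The plan is to deduce this theorem as an immediate formal consequence of \Cref{unrm_calc_GJ_thrm}; the substantive unramified computation has already been carried out there, and what remains is only to evaluate the Godement-Jacquet zeta integral attached to the spherical data $(v_{\pi_\nu}^\circ, \check{v}_{\pi_\nu}^\circ, \Phi_0)$ and to substitute into the formula of \Cref{unrm_calc_GJ_thrm}.

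Concretely, I would first quote \Cref{unrm_calc_GJ_thrm} to obtain
\[
I\!\left(f_{\omega_{\pi_\nu},s}^\circ,v_{\pi_{\nu}}^\circ\right)= \frac{Z_{\mathrm{GJ}}\!\left(m(s+\tfrac{1}{2})-\tfrac{n-1}{2},\,c_{v_{\pi_{\nu}}^\circ,\check{v}_{\pi_{\nu}}^{\circ}},\,\Phi_0\right)}{L(m(s+\tfrac{1}{2}),\omega_{\pi_\nu})}\,v_{\pi_{\nu}}^\circ,
\]
and then invoke the test vector lemma of Godement--Jacquet \cite[Lemma 6.10]{godement1972local}. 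In the unramified setting, with $v_{\pi_\nu}^\circ$ and $\check{v}_{\pi_\nu}^\circ$ the spherical vectors normalized by $\langle v_{\pi_\nu}^\circ, \check{v}_{\pi_\nu}^\circ\rangle = 1$ and $\Phi_0$ the characteristic function of $M_n(\ocal_\nu)$, that lemma supplies the identity of meromorphic functions
\[
Z_{\mathrm{GJ}}\!\left(s,\,c_{v_{\pi_\nu}^\circ,\check{v}_{\pi_\nu}^\circ},\,\Phi_0\right) = L(s,\pi_\nu).
\]
Specializing the auxiliary variable to $s \mapsto m(s+\tfrac{1}{2})-\tfrac{n-1}{2}$ and substituting into the expression above yields the claimed ratio $L(m(s+\tfrac{1}{2})-\tfrac{n-1}{2},\pi_\nu) / L(m(s+\tfrac{1}{2}),\omega_{\pi_\nu})$ multiplied by $v_{\pi_\nu}^\circ$.

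Since this reduction is purely formal once \Cref{unrm_calc_GJ_thrm} has been established, there is no genuine obstacle at this stage; the only piece of bookkeeping is to verify that the half-integral shift $\tfrac{n-1}{2}$ appearing in the numerator of the final answer matches the normalization $|\det g_\nu|^{s+\frac{n-1}{2}}$ that is built into the Godement-Jacquet integral \cref{GJ_zeta}. Once this compatibility is recorded, the proof is complete, and the theorem may legitimately be presented as a corollary rather than as an independent result.
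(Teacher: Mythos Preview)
Your proposal is correct and matches the paper's own argument exactly: the paper derives \Cref{unrm_calc_L_thrm} immediately from \Cref{unrm_calc_GJ_thrm} by invoking the test vector lemma \cite[Lemma 6.10]{godement1972local} to replace $Z_{\mathrm{GJ}}(s,c_{v_{\pi_{\nu}}^\circ,\check{v}_{\pi_{\nu}}^{\circ}},\Phi_0)$ with $L(s,\pi_\nu)$. No additional work is needed.
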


\subsection{Background and motivation}

Eisenstein series are key objects in the theory of automorphic forms. They are an important tool in the study of automorphic $L$-functions, and they figure out in the spectral decomposition of the $L^2$-space of automorphic forms. In recent years, new constructions of global integrals generating identities relating Eisenstein series were discovered.

In \cite{ginzburg2018two} Ginzburg and Soudry introduced two general identities relating Eisenstein series on split classical groups, as well as double covers of symplectic groups.
Basically, the idea of their theorem, for example, in case of symplectic group, is as follows.
Let $\tau$ be an irreducible, automorphic, cuspidal representation of $\gl{n}$. Denote by $\Delta(\tau,i)$ the Speh representation of $\mathrm{GL}_{ni}(\bba)$ of \enquote{length} $i$ corresponding to $\tau$. Let $\sigma$ be an irreducible, automorphic, cuspidal representation of $\mathrm{Sp}_m(\bba)$, and $\sigma^\iota$ a certain outer conjugation of $\sigma$ by an element of order 2.
Their theorem shows that Eisenstein series parabolically induced from $\Delta(\tau,i)\left|\det\cdot\right|^s\times \sigma ^\iota$ can be expressed in terms of \enquote{more degenerate} Eisenstein series, namely Eisenstein series parabolically induced from the Speh representation $\Delta(\tau,m+i)\left|\det\right|^s$.
In particular they get that any Eisenstein series attached to an irreducible, cuspidal representation on a maximal parabolic subgroup can be expressed in terms of an Eisenstein series, attached to a Speh representation on a Siegel parabolic subgroup.
They generalized M{\oe}glin's work \cite{moeglin1997quelques} which is an extension of \cite{ginzburg1997functions}. The identity of Ginzburg and Soudry can also be viewed as an extension of the doubling construction introduced in \cite{cai2016doubling}.
The second identity in \cite{ginzburg2018two} generalizes Ikeda's work \cite{ikeda1994} and can be viewed as a generalization of the descent construction studied in \cite{ginzburg2011descent}.

In this work we demonstrate the principle above by proving an identity relating Eisenstein series on general linear groups. Namely, we express an Eisenstein series, attached to a maximal parabolic subgroup and a pair of representations, one cuspidal and the other a character, in terms of a degenerate Eisenstein series.

The identity \eqref{eisen_ident2} has further applications. In an ongoing project, we explore several of these. For example, we use it to provide another proof that the Eisenstein series $\mathrm{E}_{m}\left(\xi\left(f_{\omega_\pi,s},\varphi_{\pi}\right)\right)$ is holomorphic, and also apply the aforementioned realization (see \Cref{realization_rem})  to certain Rankin-Selberg integrals.

\subsection{Preliminaries and notation}
\subsubsection{The groups}
Recall that $k$ denotes a number field, and $\bba$ its ring of adeles. We consider general linear groups and their parabolic subgroups, as algebraic groups over $k$. Let $\ell,r\geq 0$ be two integers. We write $P_{\ell,r}$ for the block upper-triangular maximal parabolic subgroup of $\mathrm{GL}_{\ell+r}$ with Levi part $\mathrm{S}_{\ell,r}\cong\mathrm{GL}_{\ell}\times\mathrm{GL}_{r}$. 
Its Levi decomposition is
\begin{align}
&{P_{\ell,r}= \mathrm{S}_{\ell,r}\ltimes U_{\ell,r}},\\
&\mathrm{S}_{\ell,r} = \left\{\begin{pmatrix}
A&0\\0&B
\end{pmatrix}|\ A\in\mathrm{GL}_{\ell},\ B\in\mathrm{GL}_{r} \right\},&\label{levi_of_parab}\\
&U_{\ell,r} = \left\{\begin{pmatrix}
I_\ell&X\\0&I_r
\end{pmatrix}|\ X\in M_{\ell,r}\right\}.&
\end{align}

We denote by $\mathrm{H}_{\ell,1}$ (or $\mathrm{H}_{1,r}$) the subgroup of $P_{\ell,1}$ (or $P_{1,r}$) such that  $B=1$ (or $A=1$) in \cref{levi_of_parab}.
The standard Borel subgroup of $\mathrm{GL}_{\ell}$ is denoted by $P_\ell=S_\ell U_{\ell}$, and the Weyl group of $\mathrm{GL}_{\ell}$ by $W_{\ell}$.

For a place $\nu$, we let $k_{\nu}$ be the completion of $k$ with respect to the absolute value $|\cdot|_\nu$. Let $\ell\geq 1$ be an integer. We denote the maximal compact subgroup of $\gl[k_\nu]{\ell}$ by $K_{\ell,\nu}$. For $\nu = \mathbb{R}$, we have $K_{\ell,\nu}=\mathrm{O}_\ell$, the orthogonal subgroup, and for $\nu=\mathbb{C}$, we have $K_{\ell,\nu}=\mathrm{U}_\ell(\mathbb{C})$, the unitary subgroup (not to be confused with the unipotent radical). For $\nu < \infty$, we denote the ring of integers of $k_\nu$ by $\ocal_{\nu}$ and its maximal ideal by $\pcal_\nu$. We denote a uniformizer of $\pcal_\nu$ by $\pomega$, and the cardinality of the residue field by $q_\nu$. In this case, $K_{\ell,\nu}=\gl[\ocal_{\nu}]{\ell}$. 
We denote $K_\ell:=\prod_{\nu}K_{\ell,\nu}$.

Let $P=SU$ be a standard parabolic subgroup of $G=\mathrm{GL}_\ell$. Let $T_{S}=Z\left(S\right)$ be the center of $S$. For simplicity we denote $T=T_{S_\ell}$. Let $\Sigma$ be the set of all roots corresponding to the pair $\left(G,T\right)$. i.e., the non-trivial eigencharacters of the adjoint action of $T$ on the lie algebra $\mathfrak{g}$ of $G$. Let $\Sigma^+$ be the subset of positive roots determined by $P_\ell$, so that $\mathfrak{u}=\bigoplus_{\alpha\in \Sigma ^+}\mathfrak{g}_\alpha$, where $\mathfrak{g}_\alpha$ is the eigenspace of $\alpha$, and $\mathfrak{u}$ is the lie algebra of $U_\ell$. Let $\Delta=\Delta_{P_\ell}$ be the basis of $\Sigma^+$, so that every root in $\Sigma^+$ is a sum of roots in $\Delta$.

\subsubsection{Sections and Eisenstein series}\label{parab_ind_sec}

We recall the definition of a holomorphic section of a parabolically induced representation, parameterized by an unramified character of the Levi part (see \cite[\S 2.4]{kaplan2013local}). For a thorough treatment of this subject refer to \cite[\S IV]{waldspurger2003formule} and \cite{muic2008geometric}.
Let $\ell$ and $r$ be two positive numbers. Let $\tau$ an automorphic representation of $\gl{\ell}$. Consider, for a complex number $s$, the normalized parabolic induction
\begin{equation*}
	\rho_{\tau,s} = \mathrm{Ind}_{P_{\ell,r}(\bba)}^{\gl{\ell+r}}\left(1\otimes \tau\right) \delta_{P_{\ell,r}}^{s}.
\end{equation*}
Here, $\delta_{P_{\ell,r}}$ stands for the modulus character of $P_{\ell,r}(\bba)$, i.e.,
\begin{equation*}
	\delta_{P_{\ell,r}}\left(\begin{pmatrix}A&X\\&B\end{pmatrix}\right)=\left|\det A\right|^r\left|\det B\right|^{-\ell},
\end{equation*}
where $A\in \gl{\ell}$, and $B\in \gl{r}$, and $X\in M_{\ell,r}(\bba)$.

For a given complex number $s$, the representation $\rho_{\tau,s}$ acts in the space of all smooth, holomorphic functions $\tilde{f}_{\tau,s}:\gl{\ell+r}\times \gl{\ell}\times \gl{r}\to\bbc$ that satisfy
\begin{itemize}
	\item [I.] For all $g\in\gl{\ell+r},\ a,A\in\gl{\ell},\ b,B\in\gl{r},$ and $X\in \mathrm{M}_{\ell,r}(\bba)$,
	\begin{equation*}
	\tilde{f}_{\tau,s}\left(\begin{pmatrix}A&X\\&B\end{pmatrix}g;a,b\right)=
	\delta_{P_{\ell,r}}^{s+\nicefrac{1}{2}}\left(\begin{pmatrix}A&\\&B\end{pmatrix}\right)\tilde{f}_{\tau,s}\left(g;aA,bB\right).
	\end{equation*} 
	\item [II.] For fixed $g\in\gl{\ell+r}$,
	\begin{equation*}
		\left[\left(a,b\right)\mapsto \tilde{f}_{\tau,s}\left(g;a,b\right)\right]\in \tau,
	\end{equation*} 
	where $a\in\gl{\ell}$ and $b\in\gl{r}$.
\end{itemize}
By smooth we mean that there is some compact open subgroup $Y\leq \gl{\ell+r}$ such that $\tilde{f}_{\tau,s}(ng)=\tilde{f}_{\tau,s}(g)$ for all $n\in Y$ and $g\in \gl{\ell+r}$.
We realize the space of $\rho_{\tau,s}$ as smooth, holomorphic functions from $\gl{\ell+r}$ to $\mathbb{C}$ by setting
\begin{equation*}
	f_{\tau,s}\left(g\right)=\tilde{f}_{\tau,s}\left(g;I_\ell,I_r\right).
\end{equation*}
The function $f_{\tau,s}$ satisfies
\begin{equation*}
f_{\tau,s}\left(\begin{pmatrix}A&X\\&B\end{pmatrix}g\right)=\delta_{P_{\ell,r}}^{s+\nicefrac{1}{2}}\left(\begin{pmatrix}A&X\\&B\end{pmatrix}\right)\tilde{f}_{\tau,s}\left(g;A,B\right).
\end{equation*}
In particular,
\begin{equation*}
	f_{\tau,s}:\mathrm{S}_{\ell,r}(k) U_{\ell,r}(\bba)\backslash\gl{\ell+r}\to\mathbb{C}.
\end{equation*}

There is a bijection between $\restr{\rho_{\tau,s}}{K_{\ell+r}}$ and $\mathrm{Ind}_{P_{\ell,r}(\bba)\cap K_{\ell+r}}^{K_{\ell+r}}\restr{\left(1\otimes \tau\right)}{P_{\ell,r}(\bba)\cap K_{\ell+r}}$. Let $g=g_Pg_K$ were $g_P\in P_{\ell,r}(\bba)$ and $g_K\in K_{\ell+r}$ be the Iwasawa decomposition of $g\in \gl{\ell+r}$. This bijection is given by mapping $\varphi_\tau\in \mathrm{Ind}_{P_{\ell,r}(\bba)\cap K_{\ell+r}}^{K_{\ell+r}}\restr{\left(1\otimes \tau\right)}{P_{\ell,r}(\bba)\cap K_{\ell+r}}$ to $f_{\varphi_\tau,s}$:
\begin{equation*}
	f_{\varphi,s}(g)=\delta_{P_{\ell,r}}(g_P)\left(1\otimes \tau\right)(g_P)\varphi_\tau(g_K).
\end{equation*}
A section of the form $f_{\varphi_\tau,s}(g)$ is called a standard section (when restricted to the maximal compact subgroup, it does not depend on $s$). The space of holomorphic sections equals to the space of all linear combinations of standard sections over $\bbc[q_\nu^{-s},q_\nu^s]$. Hence, a holomorphic section $f_{\tau,s}$ can be written as
\begin{equation*}
	f_{\tau,s}=\sum_{i=1}^{N}P_i(q_\nu^{-s}, q_\nu^s)f_{\varphi^{(i)}_\tau,s},
\end{equation*}
where for all $1\leq i \leq N$, $P_i \in\bbc[q_\nu^{-s}, q_\nu^s]$ and $f_{\varphi^{(i)}_\tau,s}$ is a standard section.

We use similar notations over $k_\nu$. Let $\tau_\nu$ be a representation of $\gl[k_\nu]{\ell}$. Consider, for a complex number $s$, the normalized parabolic induction
\begin{equation*}
	\rho_{\tau_\nu,s} = \mathrm{Ind}_{P_{\ell,r}(k_\nu)}^{\gl[k_\nu]{\ell+r}}\left(\tau_\nu\right) \delta_{P_{\ell,r}}^{s},
\end{equation*}
where $\delta_{P_{\ell,r}}$ is the modulus character of $P_{\ell,r}(k_\nu)$. We use analogous notation realize the space of $\rho_{\tau_\nu,s}$ as functions $f_{\tau_\nu,s}$ from $U_{\ell,r}(k_\nu)\backslash\gl[k_\nu]{\ell+r}$ to $V_{\tau_\nu}$. Similarly we can write a holomorphic section $f_{\tau_\nu,s}$ as
\begin{equation}\label{hol_sec_as_standard_local}
	f_{\tau_\nu,s}=\sum_{i=1}^{N}P_i(q_\nu^{-s}, q_\nu^s)f_{\varphi_{i,\nu},s},
\end{equation}
where for all $1\leq i \leq N$, $P_i$ is holomorphic function in $q_\nu^{\pm s}$ (if $\nu$ is non-Archimedean then $P_i\in\bbc[q_\nu^{-s}, q_\nu^s]$) and $f_{\varphi_{i,\nu},s}$ is a standard section in $\rho_{\tau_\nu,s}$.

We consider smooth, holomorphic sections $s\mapsto f_{\tau,s}$ of the normalized parabolic induction $\rho_{\tau ,s}$. We denote by $\mathrm{E}_{\ell+r}\left(f_{\tau,s}\right)$ the Eisenstein series on $\gl{\ell+r}$, attached to $f_{\tau,s}$. For $\Re(s)$ sufficiently large, it is given by the following (absolutely convergent) series
\begin{equation*}
\mathrm{E}_{\ell+r}\left(f_{\tau,s}\right)(h)=\sum \limits _{\gamma\in P_{\ell,r}(k)\backslash \gl[k]{\ell+r}} f_{\tau,s} \left(\gamma h\right).
\end{equation*}

\subsubsection{Kronecker product}\label{kprod_sec}

Let $F$ be a field. We realize the tensor product map $t_F:\gl[F]{\ell}\times \gl[F]{r}\to \gl[F]{\ell r}$ as follows.
	Let $h$ and $g$ be two square matrices of sizes $\ell$ and $r$, respectively. Then, $t_F(h,g)$ is the $\ell r$ square block matrix
	\begin{equation}\label{kprod_eq}
		t_F(h,g)= \begin{pmatrix}
			h_{1,1}g&\cdots&h_{1,\ell}g\\\vdots&&\vdots\\h_{\ell,1}g&\cdots&h_{\ell,\ell}g
		\end{pmatrix},
	\end{equation}
	where $h=(h_{i,j})_{1\leq i,j\leq \ell}$.

By \cref{kprod_eq} we immediately get that
$\ker t_F = \{(\lambda I_\ell,\lambda^{-1} I_r)|\ \lambda\in F^\times\}$. Therefore,
\begin{equation}\label{kron_grp}
	T_{\ell,r}(F):=\mathrm{Im} t_F\cong F^\times\backslash (\mathrm{GL}_\ell(F) \times \mathrm{GL}_r(F)). 
\end{equation}
It is convenient to simply denote $t_F=t$. 

We denote the transpose of the matrix $X$ by $X^T$. For a square matrix $Y$, we denote its determinant by $|Y|$. If $Y$ is also invertible we set $Y^{*}:=\left(Y^T\right)^{-1}$. The Kronecker product satisfies
\begin{eqnarray}\label{kprod_prop}
&t(h,g)=t(h,I_r)t(I_\ell,g)=t(I_\ell,g)t(h,I_r),\\
&	(t(h,g) )^{T}= t(h^T,g^T),\ 
	\left| t(h,g) \right|=\left| {h} \right|^{r}\left| {g} \right|^{\ell},\ \mathrm{and} \ ( t(h,g))^{*}= t(h^*,g^*).
\end{eqnarray}

\section{Proof of \Cref{main_thrm} - global theory} \label{global_result_section}

In this section we prove the global unfolding part of \Cref{main_thrm}. First, we note that the integral of \cref{eisen_def2} is absolutely convergent. This is due to the rapid decrease of the cusp form, the moderate growth of the Eisenstein series and the fact that the domain of integration is of finite measure.

We start with unfolding the Eisenstein series $E_{mn}\left(f_{{\omega_\pi},s}\right)$ for $\Re(s)>>0$ in \cref{eisen_def2}, where it is defined by
	\begin{equation}\label{eisen_def}
	E_{mn}\left(f_{\omega_\pi,s}\right)(t\left(h,g\right)) = \sum\limits _{\varepsilon\in P_{mn-1,1}(k)\backslash \gl[k]{mn}} f_{\omega_\pi,s}\left(\varepsilon  t\left(h,g\right)\right).
	\end{equation}
    The group $T_{m,n}(k)$ acts on the set of cosets $P_{mn-1,1}(k)\backslash \mathrm{GL}_{mn}(k)$ from the right. We split the sum as
	\begin{equation}\label{unfold_g6}
		E_{mn}\left(f_{\omega_\pi,s}\right)(t\left(h,g\right)) =\sum\limits _{\varepsilon\in P_{mn-1,1}(k)\backslash \gl[k]{mn}/T_{m,n}(k)} \sum \limits _{\gamma \in Q^\varepsilon(k)\backslash T_{m,n}(k)}  f_{\omega_\pi,s}\left(\varepsilon \gamma t\left(h,g\right)\right),	
	\end{equation}
	where $Q^\varepsilon := P_{mn-1,1}^\varepsilon \cap T_{m,n}$ and $ P_{mn-1,1}^\varepsilon := \varepsilon^{-1} P_{mn-1,1}\varepsilon$.

	In \Cref{double_coset_sec} we show that the set $P_{mn-1,1}(k)\backslash \mathrm{GL}_{mn}(k)/T_{m,n}(k)$ is finite and  find an explicit set of representatives.
	In \Cref{stab_sec} we find for each representative $\varepsilon$, the stabilizer $Q^\varepsilon$.
	In \Cref{Only_one_cont_sec} we show that the representative corresponding to the open cell is the only one that contributes to the integral \cref{eisen_def2}. Then, we rewrite the integral in \cref{eisen_def2} in the form of an Eisenstein series on $\gl{mn}$ attached to a section $\xi\left(f_{\omega_\pi,s},\varphi_{\pi}\right)$ which we write explicitly.
		
\subsection{The double cosets $P_{mn-1,1}(k)\backslash \mathrm{GL}_{mn}(k)/T_{m,n}(k)$}\label{double_coset_sec}

As mentioned above, in this section we show that the set  $P_{mn-1,1}(k)\backslash \mathrm{GL}_{mn}(k)/T_{m,n}(k)$ is finite and find an explicit set of representatives. Namely, we prove
\begin{Theorem}\label{double_cosets_rep_thrm}
	There are exactly $n$ double cosets in $P_{mn-1,1}(k)\backslash \mathrm{GL}_{mn}(k)/T_{m,n}(k)$. We list the following set of representatives: for $0\leq r \leq n-1$
	\begin{equation}\label{doubel_coset_reps_form}
	\varepsilon_r:=\begin{pmatrix}
	I_{(m-r)n-1}&&\\&&I_{rn}\\&1&\end{pmatrix}\begin{pmatrix}
	I_{(m-r)n-1}&0&0\\&1&\underline{b_r}\\ &&I_{rn}\end{pmatrix},
	\end{equation} 
	where $\underline{b_r}:=\left(e_{n-1}^T,e_{n-2}^T,\ldots,e_{n-r}^T\right)$.
\end{Theorem}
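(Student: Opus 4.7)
My plan is to identify the double coset space with a set of orbits in projective space, compute those orbits explicitly, and then verify that the listed $\varepsilon_r$ hit each orbit exactly once.

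First I would observe that, since $P_{mn-1,1}$ is the stabilizer of the line spanned by the last standard row vector, the map $g\mapsto (\text{line through the last row of } g)$ yields a bijection
\[
P_{mn-1,1}(k)\backslash \gl[k]{mn} \;\longleftrightarrow\; \mathbb{P}^{mn-1}(k),
\]
under which the right action of $T_{m,n}(k)$ on cosets becomes the right action on lines. Thus the double cosets in question are in bijection with the $T_{m,n}(k)$-orbits on nonzero vectors in $k^{mn}$ modulo scalars.

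Next I would use the natural identification $k^{mn}\cong M_{m,n}(k)$ dictated by the block shape \eqref{kprod_eq}: a row vector $v\in k^{mn}$, sliced into $m$ consecutive blocks of length $n$, is the flattening of an $m\times n$ matrix $V$. A direct computation from \eqref{kprod_eq} shows that for $v\leftrightarrow V$,
\[
v\cdot t(h,g) \;\longleftrightarrow\; h^{T}Vg,\qquad h\in\gl[k]{m},\ g\in\gl[k]{n}.
\]
Hence the $T_{m,n}(k)$-orbits on $M_{m,n}(k)\setminus\{0\}$ are exactly the rank strata, and since scaling a matrix does not change its rank, the orbits on $\mathbb{P}^{mn-1}(k)$ are also parameterized by the rank, which takes the values $1,2,\ldots,\min(m,n)=n$ (using $m\ge n$). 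This gives the count of $n$ double cosets.

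Finally I would check that the representatives $\varepsilon_r$ hit each rank. Since $\varepsilon_r$ is the product of a permutation matrix (which moves row $(m-r)n$ to the last row) and an upper unipotent matrix, the last row of $\varepsilon_r$ is precisely the $(m-r)n$-th row of the second factor, namely the vector with a $1$ in position $(m-r)n$ and with the block $\underline{b_r}$ in positions $(m-r)n+1,\ldots,mn$. Reshaping this row into an $m\times n$ matrix, the $1$'s sit in positions $(m-r,n),(m-r+1,n-1),\ldots,(m,n-r)$; the supporting rows and columns are pairwise distinct, so the resulting matrix has rank exactly $r+1$. As $r$ runs over $0,\ldots,n-1$, these ranks run over $1,\ldots,n$ without repetition, so the $\varepsilon_r$ form a complete set of representatives.

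\textbf{Main obstacle.} The conceptual content (orbits = ranks) is short; the real work is the bookkeeping in the last step --- correctly tracing through the two factors of $\varepsilon_r$ and the $i,j$-indexing of the flattening, so that the explicit matrix \eqref{doubel_coset_reps_form} is shown to produce a last row of each prescribed rank. I would do this carefully once, possibly displaying the small cases $r=0$ and $r=1$ as a sanity check, to avoid an off-by-one error in the positions of the $1$'s inside $\underline{b_r}$.
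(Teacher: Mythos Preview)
Your proof is correct and is in fact a cleaner route than the paper's. The paper reaches the same destination --- orbits are parameterized by rank --- but gets there by first invoking the Bruhat decomposition $\gl[k]{mn}=\bigcup_{j,\underline{v_j}} P_{mn-1,1}(k)w_j u_j(\underline{v_j}) T_{m,n}(k)$, then proving by explicit matrix manipulation (its Lemma 2.2) that the right action of the mirabolic subgroup $\mathrm{H}_{1,m-1}\otimes\mathrm{H}_{1,n-1}$ on the coset of $w_1 u_1(R)$ is $R\mapsto h^T R g$, then normalizing by rank (Lemma 2.3), reducing every $j$ to $j=1$ by a permutation in $T_{m,n}$ (Lemma 2.4), and finally checking disjointness by a separate matrix computation (Lemma 2.5). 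Your argument bypasses the Bruhat layer entirely by identifying $P_{mn-1,1}\backslash\gl[k]{mn}$ with $\mathbb{P}^{mn-1}(k)\cong (M_{m,n}(k)\setminus\{0\})/k^\times$ at the outset and invoking the standard fact that $\gl[k]{m}\times\gl[k]{n}$ acts with rank strata as orbits; disjointness and the count both fall out for free. What the paper's longer route buys is that the intermediate lemmas (especially the explicit description of the action in coset coordinates) feed directly into the subsequent stabilizer computation in \S\ref{stab_sec}, whereas your approach would require a small extra translation step there.
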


The rest of this section is devoted to the proof of \Cref{double_cosets_rep_thrm}.

We begin as follows. Using the Bruhat decomposition in $\mathrm{GL}_{mn}(k)$, we have
		\begin{equation}\label{first_split_to_double_coset}
			\mathrm{GL}_{mn}(k)=\bigcup \limits_{1\leq j \leq mn}\bigcup \limits_{u\in U^{(j)}_{mn}(k)}P_{mn-1,1}(k)w_j	u T_{m,n}(k),
		\end{equation}
		where $U^{(j)}_{mn}(k):=w_j^{-1}P_{mn-1,1}(k)w_j\cap U_{mn}(k)\backslash U_{mn}(k)$, and for $ 1\leq j\leq mn$, $w_j$ are the representatives of $\begin{pmatrix} W_{mn-1}&\\&1\end{pmatrix}\backslash W_{mn}$, i.e.
		\begin{equation}\label{weyl_representatives}
			w_j=\begin{pmatrix}
			I_{j-1}&&\\&&I_{mn-j}\\&1&\end{pmatrix}.
		\end{equation}
The following lemma gives an explicit form of the representatives in \cref{first_split_to_double_coset}.
		\begin{seclem}\label{uni_reps_claim}
			Let $1\leq j\leq mn$. The elements
			\begin{equation*}
			u_{j}(\underline{v_j}):=\begin{pmatrix}
			I_{j-1}&0&0\\&1&\underline{v_j}\\ &&I_{mn-j}\end{pmatrix},\qquad \underline{v_j}\in k^{mn-j}.
			\end{equation*}
			form a set of representatives of $U^{(j)}_{mn}(k)$.
		\end{seclem}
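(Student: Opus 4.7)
The plan is to proceed in two steps: first identify the subgroup
\[ H := w_j^{-1} P_{mn-1,1}(k) w_j \cap U_{mn}(k) \]
explicitly, then verify that the matrices $u_j(\underline{v_j})$ form a transversal for $H \backslash U_{mn}(k)$.

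For the first step, $P_{mn-1,1}$ is the stabilizer, under the left action on column vectors, of the hyperplane $\langle e_1, \ldots, e_{mn-1}\rangle \subset k^{mn}$, so $w_j^{-1} P_{mn-1,1} w_j$ is the stabilizer of $w_j^{-1}\langle e_1, \ldots, e_{mn-1}\rangle$. Reading off \cref{weyl_representatives}, $w_j$ induces the permutation $e_j \mapsto e_{mn}$, $e_i \mapsto e_{i-1}$ for $j+1 \leq i \leq mn$, and $e_i \mapsto e_i$ for $i < j$; consequently $w_j^{-1}\langle e_1, \ldots, e_{mn-1}\rangle = \langle e_i : i \neq j\rangle$. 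Therefore $w_j^{-1} P_{mn-1,1} w_j$ consists of the matrices whose $j$-th row vanishes off the $(j,j)$ entry, and intersecting with $U_{mn}(k)$ yields
\[ H = \{u \in U_{mn}(k) : u_{j,t} = 0 \text{ for all } t > j\}. \]

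For the second step, $u_j(\underline{v_j}) \in U_{mn}(k)$ by inspection, and its $j$-th row above the diagonal equals $\underline{v_j}$. Given any $u \in U_{mn}(k)$, I would set $\underline{v_j} := (u_{j, j+1}, \ldots, u_{j, mn})$ and compute the $(j,t)$-entries of $u \cdot u_j(\underline{v_j})^{-1}$ for $t > j$; a direct calculation (exploiting that the elementary matrices $E_{j,t}$ with $t > j$ pairwise annihilate) gives $u_{j,t} - v_t = 0$, so $u \cdot u_j(\underline{v_j})^{-1} \in H$ and every coset in $H \backslash U_{mn}(k)$ is represented. Analogously, $u_j(\underline{v_j'}) u_j(\underline{v_j})^{-1} = u_j(\underline{v_j'} - \underline{v_j})$, which lies in $H$ only when $\underline{v_j'} = \underline{v_j}$, giving uniqueness of representatives.

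The only real obstacle here is the bookkeeping with the permutation induced by $w_j$, so that $H$ is identified correctly; once $H$ is in hand, the transversal property reduces to a short matrix exercise inside the unipotent radical.
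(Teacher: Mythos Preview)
Your proposal is correct and essentially follows the paper's approach: both arguments identify $H = w_j^{-1} P_{mn-1,1}(k) w_j \cap U_{mn}(k)$ as the set of upper unipotent matrices whose $j$-th row vanishes above the diagonal, after which the transversal property is immediate. The only cosmetic difference is that the paper obtains $H$ by directly computing $w_j u w_j^{-1}$ in block form, while you use the stabilizer description of $P_{mn-1,1}$; the paper also leaves the transversal check implicit (``the lemma follows''), whereas you spell it out.
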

	
		\begin{proof}
		Let
		\begin{equation*}
			u=\begin{pmatrix}
			u_{j-1}&X_1&X_2\\&1&\underline{v_j}\\ &&u_{mn-j}\end{pmatrix}\in U_{mn}(k).
		\end{equation*}
		Then,
		\begin{equation*}
		w_j	\begin{pmatrix}
		u_{j-1}&X_1&X_2\\&1&\underline{v_j}\\ &&u_{mn-j}\end{pmatrix}w_j^{-1}=\begin{pmatrix}
			u_{j-1}&X_2&X_1\\&u_{mn-j}&0\\ &\underline{v_j}&1\end{pmatrix}.
		\end{equation*}
		Thus, $	w_j	u w_j^{-1}\in P_{mn-1,1}(k)$ iff $\underline{v_j}=\underline{0}$. This gives 
		\begin{equation*}
			U_{mn}(k)\cap w_j^{-1} P_{mn-1,1}(k) w_j=\left\{\begin{pmatrix}
			u_{j-1}&X_1&X_2\\&1&0\\ &&u_{mn-j}\end{pmatrix}\in U_{mn}(k)\right\}
		\end{equation*}
		and the lemma follows.
		\end{proof}
		
		We denote
		\begin{equation*}
			C_j(\underline{v_j}):=P_{mn-1,1}(k)w_j	u_{j}(\underline{v_j}) T_{m,n}(k).
		\end{equation*}
		In this notation we can rewrite the decomposition in \cref{first_split_to_double_coset} as follows.
		\begin{equation}\label{second_split_to_double_coset}
			\mathrm{GL}_{mn}(k)=\bigcup \limits_{1\leq j \leq mn}\bigcup \limits_{\underline{v_j}\in k^{mn-j}}C_j(\underline{v_j}).
		\end{equation}

     We now note that in particular, for $r=0$ we have $\varepsilon_0=I_{mn}$ and $C_{mn}(\underline{b_0})=P_{mn-1,1}(k)T_{m,n}(k)$. For $0\leq r\leq n-1$ the representative $\varepsilon_r$ corresponds to the double coset $C_{(m-r)n}(\underline{b_r})$. Therefore, \Cref{double_cosets_rep_thrm} reduces the decomposition in \cref{second_split_to_double_coset} to the disjoint union
     \begin{equation}\label{split_to_double_coset}
     \mathrm{GL}_{mn}(k)=  \bigcup\limits_{0\leq r \leq n-1}C_{(m-r)n}(\underline{b_r}).
     \end{equation}

  	We continue by viewing the first row of $u_{1}(\underline{v_1})$ as $m$ vectors in $k^n$, $(a_{1},\ldots,a_{m})\in k^{mn}$, where $a_j\in k^n$ for all $1\leq j \leq m$ and $a_1=(1,a'_1)$ with $a'_1\in k^{n-1}$.
  	Denote
  	\begin{equation*}
  	R_{m,n}(\underline{v_1})=\begin{pmatrix}
  	a_1\\\vdots \\a_{m}
  	\end{pmatrix}\in M_{m,n}(k).
  	\end{equation*}
  	Denote also $u_{1}(R_{m,n}(\underline{v_1})):=u_{1}(\underline{v_1})$.
  	The following lemma describes the orbit of the right action of $\mathrm{H}_{1,m-1}(k)\otimes\mathrm{H}_{1,n-1}(k)$ on the coset $	P_{mn-1,1}(k)w_1u_{1}\left(R_{m,n}(\underline{v_1})\right)$.
  	\begin{seclem}\label{tmn_orbit}
  		Let $h\in\mathrm{H}_{1,m-1}(k)$ and $g\in\mathrm{H}_{1,n-1}(k)$. Then,
  		\begin{equation}\label{tmn_orbit_eq}
  			P_{mn-1,1}(k)w_1u_{1}\left(R_{m,n}(\underline{v_1})\right)t(h,g)=P_{mn-1,1}(k)w_1u_{1}\left(h^TR_{m,n}(\underline{v_1})g\right).
  		\end{equation}
  	\end{seclem}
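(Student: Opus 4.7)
The plan is to reduce \cref{tmn_orbit_eq} to an equality of last rows. Since left multiplication by $P_{mn-1,1}(k)$ can alter the top $mn-1$ rows of a matrix arbitrarily while only scaling the last one, any two elements of $\gl[k]{mn}$ whose last rows coincide automatically lie in the same $P_{mn-1,1}(k)$-left coset. I will therefore show that the last rows of $w_1 u_1\bigl(R_{m,n}(\underline{v_1})\bigr)\,t(h,g)$ and of $w_1 u_1\bigl(h^T R_{m,n}(\underline{v_1})\,g\bigr)$ are equal.

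First I would unpack $w_1 u_1(\underline{v_1})$. A short block computation using \cref{weyl_representatives} gives
\begin{equation*}
	w_1 u_1(\underline{v_1})=\begin{pmatrix}0 & I_{mn-1}\\ 1 & \underline{v_1}\end{pmatrix},
\end{equation*}
whose last row is exactly $(1,\underline{v_1})$, i.e.\ the row-major flattening $\rho\bigl(R_{m,n}(\underline{v_1})\bigr):=(a_1,a_2,\ldots,a_m)$ of the matrix $R_{m,n}(\underline{v_1})$. Next, for $h\in \mathrm{H}_{1,m-1}(k)$ and $g\in \mathrm{H}_{1,n-1}(k)$, the first row of $h^T$ and the first column of $g$ are both $(1,0,\ldots,0)$, so the $(1,1)$-entry of $h^T R_{m,n}(\underline{v_1})\,g$ equals that of $R_{m,n}(\underline{v_1})$, namely $1$. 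In particular, the notation $u_1\bigl(h^T R_{m,n}(\underline{v_1})\,g\bigr)$ is well-defined under the convention preceding the lemma.

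The heart of the argument is then the row-major vectorization identity
\begin{equation*}
	\rho(h^T R\, g)=\rho(R)\cdot t(h,g),\qquad R\in M_{m,n}(k),
\end{equation*}
which I would verify by a direct block computation using \cref{kprod_eq}: if $R$ has rows $a_1,\ldots,a_m$, the $j$-th length-$n$ block of the row $\rho(R)\,t(h,g)$ is $\sum_{i=1}^{m} h_{i,j}\,a_i\,g$, which is precisely the $j$-th row of $h^T R\, g$. Specializing to $R=R_{m,n}(\underline{v_1})$ and combining with the previous paragraph identifies the last rows of the two matrices in \cref{tmn_orbit_eq}, which completes the proof. I do not expect any serious obstacle: the statement is essentially the familiar identity $\mathrm{vec}(AXB)=(B^T\otimes A)\mathrm{vec}(X)$ rewritten for row-major flattening, combined with the standard parametrization of $P_{mn-1,1}(k)$-left cosets by the last row.
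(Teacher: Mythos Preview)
Your argument is correct and considerably more streamlined than the paper's. The paper proceeds by writing the Levi decompositions $h=h_Uh_{\mathrm S}$ and $g=g_Ug_{\mathrm S}$, and then verifies \cref{tmn_orbit_eq} separately for each of the four factors $t(h_U,I_n)$, $t(I_m,g_U)$, $t(h_{\mathrm S},I_n)$, $t(I_m,g_{\mathrm S})$ by explicit block-matrix manipulation, before recombining via \cref{mirab_levi_dec}. Your approach bypasses this case analysis entirely: you observe that a $P_{mn-1,1}(k)$-left coset is determined by the projective class of the last row, compute that the last row of $w_1u_1(R)$ is the row-major flattening $\rho(R)$, and then invoke the single identity $\rho(R)\,t(h,g)=\rho(h^TRg)$, which is the row-vectorization form of $\mathrm{vec}(AXB)=(B^T\otimes A)\,\mathrm{vec}(X)$. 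The paper's case-by-case method has the minor advantage of being entirely self-contained at the level of matrix entries, while yours isolates the conceptual content (Kronecker product acts on flattened matrices) and makes the proof a two-line computation once that identity is stated. Both are valid; yours is the cleaner route.
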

  	
  	\begin{proof}
  		By the Levi decomposition, we can write $h=h_Uh_\mathrm{S}$ and $g=g_Ug_\mathrm{S}$. We denote
  		\begin{equation*}
  		h_U:=\begin{pmatrix}
  		1&x\\0&I_{m-1}
  		\end{pmatrix},\qquad g_U:=\begin{pmatrix}
  		1&y\\0&I_{n-1}
  		\end{pmatrix},
  		\end{equation*}
  		where $x=(x_2,\ldots,x_m),\ y=(y_2,\ldots,y_n)$, and
  		$h_\mathrm{S}:=\mathrm{diag}(1,h')$, $g_\mathrm{S}:=\mathrm{diag}(1,g')$, where $h'=\left(h_{i,j}\right)_{2\leq i,j\leq m}\in \gl[k]{m-1}$, $g'=\left(g_{i,j}\right)_{2\leq i,j\leq n}\in \gl[k]{n-1}$.
		We have
  		\begin{equation}\label{mirab_levi_dec}
	  		t(h,g)=t\left(h_U,I_n\right)t\left(I_m,g_U\right)t\left(h_\mathrm{S},I_n\right)t\left(I_m,g_\mathrm{S}\right).
  		\end{equation}
  		
  		We first prove \cref{tmn_orbit_eq} for each one of the matrices on the right hand side of \cref{mirab_levi_dec}.
	  	
	  	\underline{Case $t(h_U,I_n)$}:
  		\begin{equation*}
  		u_{1}\left(R_{m,n}(\underline{v_1})\right)t(h_U,I_n)=\begin{pmatrix}A_1&A_2&\ldots&A_m\\&I_n&&\\&&\ddots&\\&&&I_n\end{pmatrix}\begin{pmatrix}I_n&x_2I_n&\ldots&x_mI_n\\&I_n&&\\&&\ddots&\\&&&I_n\end{pmatrix},
  		\end{equation*}
  		where
  		\begin{equation*}
			A_1= \begin{pmatrix}
				1&a'_1\\0&I_{n-1}
			\end{pmatrix}\in \gl[k]{n} \qquad \mathrm{and} \qquad \forall 2\leq i\leq m ,\  A_i=\begin{pmatrix}
			a_i\\0
			\end{pmatrix}\in M_n(k).
  		\end{equation*}
  		Thus,
  		\begin{equation*}
			u_{1}\left(R_{m,n}(\underline{v_1})\right)t(h_U,I_n)=\begin{pmatrix}A_1&x_2 A_1 + A_2&\ldots&x_mA_1+A_m\\&I_n&&\\&&\ddots&\\&&&I_n\end{pmatrix}=u_{1}\left(h_U^TR_{m,n}(\underline{v_1})\right).
		\end{equation*}
		
		\underline{Case $t((I_m,g_U)$}:
		\begin{equation*}
			u_{1}\left(R_{m,n}(\underline{v_1})\right)t(I_m,g_U)=\mathrm{diag}(I_n,g_U,\ldots,g_U)\begin{pmatrix}A_1g_U&A_2g_U&\ldots&A_mg_U\\&I_n&&\\&&\ddots&\\&&&I_n\end{pmatrix}.
		\end{equation*}
		By \Cref{uni_reps_claim} $\mathrm{diag}(I_n,g_U,\ldots,g_U)\in w_1^{-1}P_{mn-1,1}(k)w_1\cap U_{mn}(k)$. Therefore,
		\begin{equation*}
				P_{mn-1,1}(k)w_1u_{1}\left(R_{m,n}(\underline{v_1})\right)t(I_m,g_U)=	P_{mn-1,1}(k)w_1u_{1}\left(R_{m,n}(\underline{v_1})g_U\right).
		\end{equation*}

		\underline{Case $t(h_\mathrm{S},I_n)$}:
		\begin{equation*}
			u_{1}\left(R_{m,n}(\underline{v_1})\right)t(h_\mathrm{S},I_n)=\begin{pmatrix}A_1&A_2&\ldots&A_m\\&I_n&&\\&&\ddots&\\&&&I_n\end{pmatrix}\begin{pmatrix}I_n&0&\ldots&0\\0&h_{2,2}I_n&\ldots&h_{2,m}I_n\\\vdots&\vdots&\ddots&\vdots\\0&h_{m,2}I_n&&h_{m,m}I_n\end{pmatrix}.
		\end{equation*}
		We get that,
		\begin{equation*}
			u_{1}\left(R_{m,n}(\underline{v_1})\right)t(h_\mathrm{S},I_n)=t(h_\mathrm{S},I_n)\begin{pmatrix}A_1&\sum_{j=2}^m h_{j,2}A_j&\ldots&\sum_{j=2}^m h_{j,m}A_j\\&I_n&&\\&&\ddots&\\&&&I_n\end{pmatrix}.
		\end{equation*}
		Therefore,
		\begin{equation*}
			u_{1}\left(R_{m,n}(\underline{v_1})\right)t(h_\mathrm{S},I_n)=t(h_\mathrm{S},I_n)u_{1}\left(h_\mathrm{S}^TR_{m,n}(\underline{v_1})\right).
		\end{equation*}

		\underline{Case $t(I_m,g_\mathrm{S})$}:
		\begin{equation*}
		u_{1}\left(R_{m,n}(\underline{v_1})\right)t(I_m,g_\mathrm{S})=t(I_m,g_\mathrm{S})\begin{pmatrix}g_\mathrm{S}^{-1}A_1g_\mathrm{S}&g_\mathrm{S}^{-1}A_2g_\mathrm{S}&\ldots&g_\mathrm{S}^{-1}A_mg_\mathrm{S}\\&I_n&&\\&&\ddots&\\&&&I_n\end{pmatrix}.
		\end{equation*}
		Again, by the fact that the first column of $g_\mathrm{S}$ is $(1,0,\ldots,0)\in k^n$ and for $1\leq i\leq m$, only the first row of $A_i$ is nonzero, we have $g_\mathrm{S}^{-1}A_ig_\mathrm{S}=A_ig_\mathrm{S}$.
		Thus,
		\begin{equation*}
			u_{1}\left(R_{m,n}(\underline{v_1})\right)t(I_m,g_\mathrm{S})=t(I_m,g_\mathrm{S})u_{1}\left(R_{m,n}(\underline{v_1})g_\mathrm{S}\right).
		\end{equation*}
		
		Let $D$ be one of the matrices $t(h_\mathrm{S},I_n)$ or $t(I_m,g_\mathrm{S})$. Then, by the fact that the first column of $D$ is $(1,0,\ldots,0)\in k^{mn}$, we have $w_1Dw_1^{-1}\in P_{mn-1,1}(k)$. Hence,  \cref{tmn_orbit_eq} is true for the last two cases as well.
		
		All in all, together with \cref{mirab_levi_dec}, we get
			\begin{equation*}
				P_{mn-1,1}(k)w_1u_{1}\left(R_{m,n}(\underline{v_1})\right)t(h,g)=P_{mn-1,1}(k)w_1u_{1}\left(h_\mathrm{S}^Th_U^TR_{m,n}(\underline{v_1})g_Ug_\mathrm{S}\right),
			\end{equation*}
		and the lemma follows.
			
  	\end{proof}
  	
  	Next, we use \Cref{tmn_orbit} to show that for each $ \underline{v_1}\in k^{mn-1}$, the double coset $C_1(\underline{v_1})$ equals to one of the double cosets in \cref{split_to_double_coset}.
  	\begin{seclem}\label{shortning_lem}
  		There exists $0\leq r\leq n-1$ such that $C_1(\underline{v_1})=C_{(m-r)n}(\underline{b_r})$.
  	\end{seclem}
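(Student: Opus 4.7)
The plan is to use \Cref{tmn_orbit} to reduce the matrix $R := R_{m,n}(\underline{v_1})$ to a canonical form by mirabolic operations, and then to identify this canonical form with $\varepsilon_r$.

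First I would apply \Cref{tmn_orbit} together with the Levi decomposition $h = h_U h_\mathrm{S} \in \mathrm{H}_{1,m-1}(k)$ and $g = g_U g_\mathrm{S} \in \mathrm{H}_{1,n-1}(k)$. This expresses the right action of $t(h,g)$ on the coset $P_{mn-1,1}(k) w_1 u_1(\underline{v_1})$ as the matrix operation $R \mapsto h^T R g$, which in turn splits into the elementary moves: adding arbitrary multiples of row $1$ to rows $2, \ldots, m$; applying an arbitrary element of $\gl[k]{m-1}$ to rows $2, \ldots, m$; and the analogous column operations involving column $1$ and $\gl[k]{n-1}$. Since $R_{1,1} = 1$, these operations first clear the rest of row $1$ and column $1$, and then bring the remaining $(m-1) \times (n-1)$ block to Smith normal form. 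Hence $R$ lies in the same orbit as
\begin{equation*}
R^{(r)} := \begin{pmatrix} 1 & 0 & 0 \\ 0 & I_r & 0 \\ 0 & 0 & 0 \end{pmatrix},
\end{equation*}
for some $r \in \{0, 1, \ldots, n-1\}$ (the upper bound uses the hypothesis $m \geq n$). Setting $\underline{v_1^{(r)}}$ to be the element with $R_{m,n}(\underline{v_1^{(r)}}) = R^{(r)}$, \Cref{tmn_orbit} then gives $C_1(\underline{v_1}) = C_1(\underline{v_1^{(r)}})$.

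It remains to verify $C_1(\underline{v_1^{(r)}}) = C_{(m-r)n}(\underline{b_r})$. For this I would use that $P_{mn-1,1}(k) \backslash \gl[k]{mn}$ is parameterized by the last row of the representative modulo $k^\times$, together with the observation that for a row vector $v \in k^{mn}$ viewed as an $m \times n$ matrix $V$ by row-concatenation, the right $T_{m,n}(k)$-action translates into $v \cdot t(h,g) \leftrightarrow h^T V g$. The last row of $w_1 u_1(\underline{v_1^{(r)}})$ corresponds to $R^{(r)}$, while a direct computation with the cyclic row permutation $w_{(m-r)n}$ acting on $u_{(m-r)n}(\underline{b_r})$ shows that the last row of $\varepsilon_r$ corresponds to the matrix $\tilde{R}^{(r)}$ having $1$'s at the anti-diagonal positions $(m-r, n), (m-r+1, n-1), \ldots, (m, n-r)$ and $0$'s elsewhere. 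Both $R^{(r)}$ and $\tilde{R}^{(r)}$ have rank $r+1$, so permutation matrices $h \in \gl[k]{m}$ and $g \in \gl[k]{n}$ can be chosen with $h^T R^{(r)} g = \tilde{R}^{(r)}$, which forces $w_1 u_1(\underline{v_1^{(r)}}) \cdot t(h,g)$ to share its last row with $\varepsilon_r$, and hence $w_1 u_1(\underline{v_1^{(r)}}) \in P_{mn-1,1}(k) \varepsilon_r T_{m,n}(k)$.

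I expect the main obstacle to be the second step, specifically the computation of the last row of $\varepsilon_r$: one must carefully track how $w_{(m-r)n}$ rearranges the rows of $u_{(m-r)n}(\underline{b_r})$ and then interpret the resulting row block-by-block as an $m \times n$ matrix in order to produce $\tilde{R}^{(r)}$. Once that identification is in place, the matching with $R^{(r)}$ reduces cleanly to a rank comparison.
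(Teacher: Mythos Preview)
Your proposal is correct and follows essentially the same strategy as the paper: first use \Cref{tmn_orbit} to reduce $R_{m,n}(\underline{v_1})$ to the canonical rank form $R^{(r)}=\begin{pmatrix}I_{r+1}&0\\0&0\end{pmatrix}$ via mirabolic row and column operations, then match this canonical coset with $C_{(m-r)n}(\underline{b_r})$. The only difference is in the execution of the second step: the paper writes down an explicit element $E=t(P_1,P_2)\in T_{m,n}(k)$ built from permutation matrices and checks directly that $E^{-1}u_1(\underline{b'_r})E=u_{(m-r)n}(\underline{b_r})$ and $P_{mn-1,1}(k)w_1E=P_{mn-1,1}(k)w_{(m-r)n}$, whereas you use the (equivalent but slightly more conceptual) observation that $P_{mn-1,1}(k)\backslash\gl[k]{mn}$ is parameterized by last rows and that the induced $T_{m,n}$-action is $V\mapsto h^TVg$, reducing the matching to a rank comparison.
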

   	
   	\begin{proof}
   			Let $r:=\mathrm{rk}(R_{m,n}(\underline{v_1}))-1$, i.e. the dimension of the space spanned by $a_1,\ldots,a_m$ minus $1$. There exist $h_0\in \gl[k]{m}$ and $g_0\in \gl[k]{n}$ such that $h_0^TR_{m,n}(\underline{v_1})g_0=R_{m,n}(\underline{b'_r})$, where $\underline{b^\prime_r}$ corresponds to 
   			\begin{equation*}
   			R_{m,n}(\underline{b^\prime_r})=\begin{pmatrix}I_{r+1}&0\\0&0\end{pmatrix}.
   			\end{equation*}
   			In fact, in order to preserve the $1$ at the top left corner of $R_{m,n}(\underline{v_1})$, we must have  $h_0\in\mathrm{H}_{1,m-1}(k)$ and $g_0\in\mathrm{H}_{1,n-1}(k)$. Hence, by \Cref{tmn_orbit}, we have
   			\begin{equation*}
	   			C_1(\underline{v_1})=P_{mn-1,1}(k)w_1u_{1}\left(R_{m,n}(\underline{v_1})\right)t(h_0,g_0)T_{m,n}(k)=C_1(\underline{b'_r}).
   			\end{equation*}
   			Now, 
   			\begin{equation*}
   				C_1(\underline{b'_r})=P_{mn-1,1}(k)w_1E\left(E^{-1}u_{1}\left(\underline{b'_r}\right)E\right)T_{m,n}(k),
   			\end{equation*}
   			where
   			\begin{equation*}
				E:=t\left(\begin{pmatrix}
				&&1&\\&I_{m-r-2}&&\\1&&&\\&&&I_r\end{pmatrix},\begin{pmatrix}
				&&&1\\&&1&\\&\reflectbox{$\ddots$} &&\\1&&&\end{pmatrix}\right).
   			\end{equation*}
   			On the one hand, $E^{-1}u_{1}\left(\underline{b'_r}\right)E=u_{(m-r)n}\left(\underline{b_r}\right)$, and on the other, $P_{mn-1,1}(k)w_1E=P_{mn-1,1}(k)w_{(m-r)n}$.
   			Thus, $C_1(\underline{b'_r})=C_{(m-r)n}(\underline{b_r})$ as requested.	
  	\end{proof}

  	Now, we conclude that for each $1\leq j \leq mn$ and $\underline{v_j}\in k^{mn-j}$, the double coset  $C_j(\underline{v_j})$ equals to one of the double cosets in \cref{split_to_double_coset}.
 	\begin{seclem}\label{shortning_lem_gen}
 			Let $1\leq j \leq mn$ and $\underline{v_j}\in k^{mn-j}$. There exists $0\leq r\leq n-1$ such that $C_j(\underline{v_j})=C_{(m-r)n}(\underline{b_r})$.
 	\end{seclem}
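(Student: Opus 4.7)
My plan is to reduce the general case to $j=1$ and then invoke Lemma \ref{shortning_lem}. The underlying principle is that two elements $g_1,g_2 \in \mathrm{GL}_{mn}(k)$ satisfy $P_{mn-1,1}(k)g_1 = P_{mn-1,1}(k)g_2$ if and only if their bottom rows are proportional, since $P_{mn-1,1}$ is the stabilizer of the line $k\cdot e_{mn}^T$ under right multiplication on row vectors.

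Reading off directly, the bottom row of $w_j u_{j}(\underline{v_j})$ is $e_j^T u_j(\underline{v_j}) = (0,\ldots,0,1,v_{j,j+1},\ldots,v_{j,mn})$, with its leading $1$ in position $j$. Identifying $k^{mn}$ with $M_{m,n}(k)$ via $v\mapsto V$, where $V_{i,a}=v_{(i-1)n+a}$ (the same identification implicit in Lemma \ref{tmn_orbit}), this bottom row becomes a matrix $V_j$ with $V_j[p,q]=1$ for the unique $p,q$ satisfying $j=(p-1)n+q$, $1\leq p\leq m$, $1\leq q\leq n$. Rerunning the case analysis in the proof of Lemma \ref{tmn_orbit}, but now without the restriction of $(h,g)$ to $\mathrm{H}_{1,m-1}(k)\times\mathrm{H}_{1,n-1}(k)$, shows that the right action of $t(h,g)\in T_{m,n}(k)$ on such a matrix is $V\mapsto h^T V g$.

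I would then choose $h\in\mathrm{GL}_m(k)$ to be the transposition of rows $1$ and $p$, and $g\in\mathrm{GL}_n(k)$ the transposition of columns $1$ and $q$. The matrix $h^T V_j g$ has $(1,1)$-entry equal to $V_j[p,q]=1$, so the bottom row of $w_j u_j(\underline{v_j})t(h,g)$ begins with $1$ and therefore agrees exactly with the bottom row of $w_1 u_1(\underline{v_1'})$ for a uniquely determined $\underline{v_1'}\in k^{mn-1}$. By the stabilizer principle above, $w_j u_j(\underline{v_j})t(h,g)\in P_{mn-1,1}(k)w_1 u_1(\underline{v_1'})$, which gives $C_j(\underline{v_j})=C_1(\underline{v_1'})$. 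Applying Lemma \ref{shortning_lem} to $\underline{v_1'}$ then produces $0\le r\le n-1$ with $C_1(\underline{v_1'})=C_{(m-r)n}(\underline{b_r})$, finishing the reduction.

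The only non-bookkeeping input is the formula $V\mapsto h^T V g$ for the $T_{m,n}(k)$-action in matrix form; but this is essentially the content of Lemma \ref{tmn_orbit}, with $(h,g)$ simply no longer restricted to the mirabolic subgroups used there. Consequently I expect no serious obstacle beyond carefully tracking the permutations and scalars.
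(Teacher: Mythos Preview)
Your approach is correct and essentially matches the paper's: both reduce to $j=1$ by right-multiplying $w_ju_j(\underline{v_j})$ by a suitable element of $T_{m,n}(k)$ and then invoke Lemma \ref{shortning_lem}. The paper takes $E$ to be a Kronecker product of cyclic permutations (rather than your transpositions) and checks the coset equality by directly computing $E^{-1}u_j(\underline{v_j})E$ and $P_{mn-1,1}(k)w_jE$, while you use the bottom-row/stabilizer viewpoint; one small remark is that the formula $V\mapsto h^TVg$ for the action on the last row follows from a one-line Kronecker-product calculation and need not be obtained by rerunning the mirabolic-specific case analysis of Lemma \ref{tmn_orbit}.
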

  	\begin{proof}
  		If $j=1$ then by \Cref{shortning_lem} there exists $0\leq r\leq n-1$ such that $C_1(\underline{v_1})=C_{(m-r)n}(\underline{b_r})$ as requested. For all other cases  $1< j \leq mn$, we denote $j=\alpha n+\beta$, where $0\leq \alpha \leq m-1$ and $1\leq \beta \leq n$.
  		We have
  		\begin{equation*}
  			C_j(\underline{v_j})=P_{mn-1,1}(k)w_jE\left(E^{-1}u_{j}\left(\underline{v_j}\right)E\right)T_{m,n}(k),
  		\end{equation*}
  		where
  		\begin{equation*}
  			E:=t\left(\begin{pmatrix}
  			&I_{\alpha}&\\1&&\\&&I_{m-\alpha-1}\end{pmatrix},
  			\begin{pmatrix}&I_{\beta-1}&\\1&&\\&&I_{n-\beta}\end{pmatrix}\right).
  		\end{equation*}
  		On the one hand, $E^{-1}u_{j}\left(\underline{v_j}\right)E=u_{1}\left(\underline{v_1}\right)$, where $\underline{v_1}=(0,\underline{v_j})\in k^{mn-1}$, and on the other, $P_{mn-1,1}(k)w_jE=P_{mn-1,1}(k)w_{1}$.
  		Thus, $C_j(\underline{v_j})=C_{1}(\underline{v_1})$, and the proof follows from the $j=1$ case.
	\end{proof}   

	\Cref{shortning_lem_gen} covers all the possibilities for the double cosets that appear in \cref{second_split_to_double_coset}. i.e. 
	\begin{equation*}
	\bigcup \limits_{1\leq j \leq mn}\bigcup \limits_{\underline{v_j}\in k^{mn-j}}C_j(\underline{v_j})=  \bigcup\limits_{0\leq r \leq n-1}C_{(m-r)n}(\underline{b_r}).
	\end{equation*}
	
	It is left to show that the double cosets in \cref{split_to_double_coset} are pairwise disjoint. This is done in following lemma.
   \begin{seclem}\label{double_cosets_disjoint}
   		Let $0\leq r\neq \ell \leq n-1$ be two integers. Then, $C_{(m-r)n}(\underline{b_r})\neq C_{(m-\ell)n}(\underline{b_\ell})$.
   \end{seclem}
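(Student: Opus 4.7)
The plan is to construct a numerical invariant on the double coset space $P_{mn-1,1}(k)\backslash \mathrm{GL}_{mn}(k)/T_{m,n}(k)$ which distinguishes the representatives $\varepsilon_r$ for different values of $r$. The invariant I would use is the rank of the last row of a representative, reshaped as an $m\times n$ matrix.

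\smallskip

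More precisely, for $g\in\mathrm{GL}_{mn}(k)$, write its last row as a concatenation $(v_1,\ldots,v_m)$ with $v_i\in k^n$, and form the matrix $M(g)\in M_{m,n}(k)$ whose $i$-th row is $v_i$. First I would verify that $\mathrm{rk}\,M(g)$ depends only on the double coset $P_{mn-1,1}(k)gT_{m,n}(k)$. For the left action: writing $p=\begin{pmatrix}A&*\\0&b\end{pmatrix}\in P_{mn-1,1}(k)$, the last row of $pg$ is $b$ times the last row of $g$, so $M(pg)=bM(g)$, which does not change the rank. For the right action by $t(h,g')\in T_{m,n}(k)$: a direct block computation using $t(h,g')=(h_{ij}g')_{1\le i,j\le m}$ shows that if $v=(v_1,\ldots,v_m)$ then the $j$-th block of $v\cdot t(h,g')$ equals $\bigl(\sum_i h_{ij}v_i\bigr)g'$, and hence $M(g\cdot t(h,g'))=h^{T}M(g)\,g'$. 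Since $h,g'$ are invertible, the rank is preserved.

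\smallskip

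Next I would compute the invariant on each $\varepsilon_r$. The last row of $w_{(m-r)n}$ is $e_{(m-r)n}$, so the last row of $\varepsilon_r$ is the $(m-r)n$-th row of $u_{(m-r)n}(\underline{b_r})$, namely
\[
(0,\ldots,0,1,\underline{b_r}),
\]
with the $1$ in position $(m-r)n$ and $\underline{b_r}=(e_{n-1}^{T},e_{n-2}^{T},\ldots,e_{n-r}^{T})$ occupying the last $rn$ coordinates. Grouping into $m$ blocks of length $n$, the matrix $M(\varepsilon_r)$ has zero rows in positions $1,\ldots,m-r-1$, a row $(0,\ldots,0,1)=e_n^T$ in position $m-r$, and rows $e_{n-1}^T,e_{n-2}^T,\ldots,e_{n-r}^T$ in positions $m-r+1,\ldots,m$. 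These $r+1$ nonzero rows have their leading $1$'s in distinct columns, so $\mathrm{rk}\,M(\varepsilon_r)=r+1$.

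\smallskip

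Finally, if $0\le r\neq \ell\le n-1$, then $r+1\neq \ell+1$, so $\mathrm{rk}\,M(\varepsilon_r)\neq \mathrm{rk}\,M(\varepsilon_\ell)$; by the invariance established above, $C_{(m-r)n}(\underline{b_r})\neq C_{(m-\ell)n}(\underline{b_\ell})$. The only delicate point — though it is really just bookkeeping — is the block computation showing that the right action of $t(h,g')$ on the reshaped last row is given by $M\mapsto h^{T}Mg'$; once this is in place, the rank function is manifestly a well-defined invariant and the disjointness is immediate.
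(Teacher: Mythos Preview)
Your proof is correct, and takes a genuinely cleaner route than the paper's. The paper assumes the two double cosets coincide, writes out the relation $w_{(m-\ell)n}^{-1}pw_{(m-r)n}u_{(m-r)n}(\underline{b_r})=u_{(m-\ell)n}(\underline{b_\ell})t(h,g)$ explicitly in block form, and by comparing specific entries forces $h$ into a block upper-triangular shape whose bottom-right block lies in $M_{\ell+1,r+1}(k)$, yielding $\ell\le r$; symmetry then gives $r=\ell$. Your argument packages the same phenomenon as a single invariant: the rank of the last row of a representative, reshaped as an $m\times n$ matrix. This is well defined because $P_{mn-1,1}$ only scales the last row while $T_{m,n}$ acts on the reshaping by $M\mapsto h^{T}Mg'$, and it evaluates to $r+1$ on $\varepsilon_r$. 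In effect you have recognized that the rank function $\mathrm{rk}\,R_{m,n}(\cdot)$ used earlier (for representatives of the form $w_1u_1(\underline{v_1})$) descends to the full double coset space, so disjointness becomes a one-line consequence rather than a matrix chase.
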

	\begin{proof}
		Assume that $C_{(m-r)n}(\underline{b_r}) = C_{(m-\ell)n}(\underline{b_\ell})$. i.e.,
		\begin{equation}\label{double_cosets_are_eq_first}
			P_{mn-1,1}(k)w_{(m-r)n}u_{(m-r)n}\left(\underline{b_r}\right)T_{m,n}(k) = P_{mn-1,1}(k)w_{(m-\ell)n}u_{(m-\ell)n}\left(\underline{b_\ell}\right)T_{m,n}(k).
		\end{equation}	
		We show that $r=\ell$. \Cref{double_cosets_are_eq_first} gives that there exist $p\in P_{mn-1,1}(k)$, $h\in \gl[k]{m}$, and $g\in \gl[k]{n}$ such that
		\begin{equation}\label{double_cosets_are_eq}
			w_{(m-\ell)n}^{-1}p w_{(m-r)n}u_{(m-r)n}\left(\underline{b_r}\right) = u_{(m-\ell)n}\left(\underline{b_\ell}\right)t(h,g).
		\end{equation}
		Denote 
		\begin{equation*}
			p:=\begin{pmatrix}
				A_1&A_2&\underline{y_1}\\A_3&A_4&\underline{y_2}\\0&0&d\end{pmatrix},
		\end{equation*}
		where $A_4$ is a $\ell n$ by $rn$ matrix and $d\in k^\times$.
		Then, the left hand side of \cref{double_cosets_are_eq} equals
		\begin{equation}\label{lhs_double_cosets_are_eq}
			w_{(m-\ell)n}^{-1}\begin{pmatrix}
			A_1&A_2&\underline{y_1}\\A_3&A_4&\underline{y_2}\\0&0&d\end{pmatrix} w_{(m-r)n}u_{(m-r)n}\left(\underline{b_r}\right) = \begin{pmatrix}
			A_1&\underline{y_1}&\underline{y_1}\cdot\underline{b_r} +A_2\\0&d&d\underline{b_r}\\A_3&\underline{y_2}&\underline{y_2}\cdot\underline{b_r} +A_4\end{pmatrix}.
		\end{equation}
		Denote $h=\left(h_{i,j}\right)_{1\leq i,j\leq m}$ and $g=\left(g_{i,j}\right)_{1\leq i,j\leq n}$. Then, the right hand side of \cref{double_cosets_are_eq} equals
		\begin{equation}\label{rhs_double_cosets_are_eq}
			u_{(m-\ell)n}\left(\underline{b_\ell}\right)t(h,g)= \begin{pmatrix}
			I_{(m-\ell)n-1}&0&0\\&1&\underline{b_\ell}\\ &&I_{\ell n}\end{pmatrix} \begin{pmatrix}
			h_{1,1}g&\cdots&h_{1,\ell}g\\\vdots&&\vdots\\h_{\ell,1}g&\cdots&h_{\ell,\ell}g
			\end{pmatrix}.
		\end{equation}
		
		We see that for $1\leq i \leq (m-r)n-1$ the coordinates $\left((m-r)n,i\right)$ of the matrix in the right hand side of \cref{lhs_double_cosets_are_eq} are all zero. Hence, by comparing \cref{lhs_double_cosets_are_eq,rhs_double_cosets_are_eq}, we get the following system of equations. For all $1\leq j \leq n$ and all $1\leq j'\leq m-r-1$
		\begin{equation*}
			\sum_{i=0}^\ell h_{m-\ell+i,j'}g_{n-i,j}=0.
		\end{equation*}
		The rows of $g$ are linearly independent, so we conclude that for $m-\ell \leq i \leq m$ and $1\leq j \leq m-r-1$ we have $h_{i,j}=0$. 
		This gives
		\begin{equation*}
			h=\begin{pmatrix}
				A&B\\0&D
			\end{pmatrix},
		\end{equation*}
	where $D\in M_{\ell+1,r+1}(k)$. i.e. $D$ has $\ell+1$ linearly independent rows in $k^{r+1}$. This implies $\ell\leq r$. From symmetry of $r$ and $\ell$ in \cref{double_cosets_are_eq_first} we get that $r\leq \ell$ as well. Thus, $r=\ell$ as requested.
	\end{proof}

	\subsection{The stabilizers}\label{stab_sec}

	In this section we compute $Q^{\varepsilon_r} = P_{mn-1,1}^{\varepsilon_r} \cap T_{m,n}$ for all $0\leq r < n$, where $\varepsilon_r$ are the different representatives in \cref{doubel_coset_reps_form}. We denote the following maximal parabolic subgroups of $\gl[k]{m}$ and $\gl[k]{n}$ by $P_{r+1}^m:=P_{m-r-1,r+1}(k)$ and $P_{r+1}^n:=P_{n-r-1,r+1}(k)$, respectively.
	Consider the following subgroup of $t\left(P_{r+1}^m(k), P_{r+1}^n(k)\right)\leq T_{m,n}(k)$,
	\begin{equation*}
	t_\Delta\left(P_{r+1}^m(k), P_{r+1}^n(k)\right):=\left\{t\left(\begin{pmatrix}
	A&B\\0&\lambda d^{\ast}\end{pmatrix},\begin{pmatrix}
	a&b\\0& d\end{pmatrix}\right)\in T_{m,n}(k)\bigg\vert d\in \gl[k]{r+1},\ \lambda\in k^\times\right\},
	\end{equation*}
	such that $d^{\ast}:=\tilde{w}_r^{-1}(d^{T})^{-1}\tilde{w}_r$, where
	\begin{equation*}
	\tilde{w}_r=\begin{pmatrix}
	&&1\\&\reflectbox{$\ddots$}&\\1&&
	\end{pmatrix}\in \gl[k]{r}.
	\end{equation*}
	
	\begin{prop}\label{stab_prop}
		Let $0\leq r \leq n-1$. Then, $Q^{\varepsilon_r}(k)=t_\Delta\left(P_{r+1}^m(k), P_{r+1}^n(k)\right)$.
	\end{prop}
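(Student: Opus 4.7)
The plan is to recast the defining condition of $Q^{\varepsilon_r}$ as a single left-eigenvector equation and then solve it using the tensor structure of $t(h,g)$. The starting observation is that $\varepsilon_r t(h,g)\varepsilon_r^{-1}\in P_{mn-1,1}(k)$ if and only if the last row of $\varepsilon_r t(h,g)$ is a scalar multiple of the last row of $\varepsilon_r$; equivalently, writing $v$ for the last row of $\varepsilon_r$, the condition is $v\cdot t(h,g)=\lambda v$ for some $\lambda\in k^\times$. Reading off $v$ from \cref{doubel_coset_reps_form} and identifying position $(i-1)n+j$ in $k^{mn}$ with $e_i\otimes e_j$ in $k^m\otimes k^n$, a short calculation gives
\begin{equation*}
v^T=\sum_{k=0}^{r}e_{m-r+k}\otimes e_{n-k}.
\end{equation*}

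Next, I would exploit the further identification $k^m\otimes k^n\cong M_{m,n}(k)$ sending $e_i\otimes e_j$ to the elementary matrix $E_{ij}$. Under this, $v^T$ corresponds to the $m\times n$ matrix $V$ whose bottom-right $(r+1)\times(r+1)$ block is the anti-diagonal $\tilde{w}_r$ and whose remaining entries vanish. A direct computation from \cref{kprod_eq} shows that $t(h,g)$ acts on $k^{mn}\cong M_{m,n}(k)$ by $M\mapsto h M g^T$. Combined with $t(h,g)^T=t(h^T,g^T)$ from \cref{kprod_prop}, the eigenvector condition translates to the single matrix equation
\begin{equation*}
h^T V g = \lambda V.
\end{equation*}

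From here the problem is purely linear-algebraic. I would block-decompose
\begin{equation*}
h=\begin{pmatrix}A&B\\C&D\end{pmatrix},\qquad g=\begin{pmatrix}a&b\\c&d\end{pmatrix},
\end{equation*}
with $D,d\in\gl[k]{r+1}$, and match the four blocks of $h^T V g$ against those of $\lambda V$. The resulting system is $C^T\tilde{w}_r c=0$, $C^T\tilde{w}_r d=0$, $D^T\tilde{w}_r c=0$, and $D^T\tilde{w}_r d=\lambda\tilde{w}_r$. Invertibility of $\tilde{w}_r$ together with the last equation forces $D$ and $d$ to be invertible and rearranges to $D=\lambda\tilde{w}_r d^{-T}\tilde{w}_r=\lambda d^{\ast}$; the two off-diagonal equations then force $C=0$ and $c=0$, while the first becomes automatic. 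Hence $h\in P_{r+1}^m(k)$, $g\in P_{r+1}^n(k)$, with their bottom-right blocks coupled by $D=\lambda d^{\ast}$, which is exactly the defining relation of $t_\Delta(P_{r+1}^m(k),P_{r+1}^n(k))$. The argument is reversible, since any element of $t_\Delta(P_{r+1}^m(k),P_{r+1}^n(k))$ satisfies the same block equations, so both inclusions follow.

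The main obstacle is bookkeeping rather than ideas: one has to pin down the identifications $k^{mn}\cong k^m\otimes k^n\cong M_{m,n}(k)$ and keep track of the transposes carefully so that the Kronecker product conventions of \Cref{kprod_sec} match. In particular, it is this careful setup that produces the twist $d\mapsto d^{\ast}$ coupling the bottom-right blocks of $h$ and $g$. Once these conventions are fixed, the remaining computation is a routine block multiplication.
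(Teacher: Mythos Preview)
Your proposal is correct and follows the same overall strategy as the paper: reduce membership in $Q^{\varepsilon_r}$ to the eigenvector equation $v\cdot t(h,g)=\lambda v$ for the last row $v$ of $\varepsilon_r$, then block-decompose $h$ and $g$ and solve. The difference is in how the eigenvector equation is unpacked. The paper isolates an abstract tensor-product identity (\Cref{tensor_of_inv_id_lemma}): $(S\otimes R)\bigl(\sum v_j\otimes w_j\bigr)=\alpha\sum v_j\otimes w_j$ iff $S\circ R^T=\alpha\cdot\mathrm{id}$, and then in \Cref{stab_lemma} spends some effort identifying the correct bilinear form so that the adjoint $R^T$ produces the twist $d\mapsto d^{\ast}=\tilde{w}_r^{-1}(d^T)^{-1}\tilde{w}_r$. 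You instead pass directly to the matrix model $k^m\otimes k^n\cong M_{m,n}(k)$, under which the condition becomes the single equation $h^T V g=\lambda V$ with $V=\begin{psmallmatrix}0&0\\0&\tilde w_r\end{psmallmatrix}$; the four block equations and the coupling $D=\lambda d^{\ast}$ then fall out of a one-line multiplication. Your route is more concrete and short-circuits the need for an auxiliary lemma, at the cost of being tied to this particular $V$; the paper's formulation separates out a reusable statement about tensor products but pays for it with more bookkeeping on the bilinear form. Either way, the four block equations and their resolution are identical.
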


	Before proving \Cref{stab_prop}, we need the following two lemmas. Generally, we have, 
\begin{seclem}\label{tensor_of_inv_id_lemma}
	Let $V,W$ be vector spaces of dimension $\ell$ over $k$, with bases $\{v_1,v_2,\ldots, v_\ell\}$ and $\{w_1,w_2,\ldots, w_\ell\}$, respectively.
	Let $B:V\times W\to k$ be a non-degenerate bilinear form, such that $B(v_i,w_j)=\delta_{i,j}$ for all $1\leq i,j\leq \ell$.
	Let $S:V\to V$ and $R:W\to W$ be linear transformations. Then, for $\alpha\in k$,
	\begin{equation}\label{tensor_of_inv_id}
		\left(S\otimes R\right)\left(\sum_{j=1}^{\ell}v_j\otimes w_j\right)=\alpha\sum_{j=1}^{\ell}v_j\otimes w_j
	\end{equation}
	iff $S \circ R^T = \alpha\cdot \mathrm{id}_V$. Therefore, in case $\alpha\neq 0$, \cref{tensor_of_inv_id} holds iff $S,R$ are invertible and $S=\alpha\left(R^T\right)^{-1}$, where $R^T : V \to V$ is the transformation adjoint to $R$ via the non-degenerate bilinear form $B$.
\end{seclem}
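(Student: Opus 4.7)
The plan is to recognize that the element $\sum_j v_j \otimes w_j$ is the canonical copy of the identity endomorphism under a natural identification. The bilinear form $B$ with $B(v_i,w_j)=\delta_{i,j}$ identifies $W$ with $V^*$ by sending $w \mapsto B(\cdot,w)$, and under this identification $\{w_j\}$ becomes the dual basis of $\{v_j\}$. Hence $V\otimes W \cong V\otimes V^*\cong \mathrm{End}(V)$, and $\sum_j v_j\otimes w_j$ corresponds to $\mathrm{id}_V$. Under this same identification, the tensor action of $S\otimes R$ on $\mathrm{End}(V)$ becomes $\phi \mapsto S\circ \phi\circ R^T$, where $R^T:V\to V$ is the adjoint of $R$ with respect to $B$ (this being precisely how transpose-via-$B$ is defined). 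Applying this to $\phi=\mathrm{id}_V$, the identity \eqref{tensor_of_inv_id} becomes $S\circ R^T = \alpha \cdot \mathrm{id}_V$, which is what we want.

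Since this is a linear algebra statement over an arbitrary field, I would actually present a quick direct computational verification to make the argument self-contained. Writing $S v_j=\sum_i s_{ij}v_i$ and $R w_j=\sum_i r_{ij}w_i$, one computes
\begin{equation*}
(S\otimes R)\left(\sum_{j=1}^{\ell}v_j\otimes w_j\right) = \sum_{i,k}\left(\sum_{j=1}^{\ell} s_{ij}r_{kj}\right) v_i\otimes w_k,
\end{equation*}
while $\alpha\sum_j v_j\otimes w_j=\sum_{i,k}\alpha\delta_{i,k}\, v_i\otimes w_k$. Since $\{v_i\otimes w_k\}$ is a basis of $V\otimes W$, the tensor identity is equivalent to $\sum_j s_{ij}r_{kj}=\alpha\delta_{i,k}$ for all $i,k$. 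A short verification (using $B(R^T v_j,w_k)=B(v_j,Rw_k)=r_{jk}$) shows that the matrix of $R^T$ in the basis $\{v_i\}$ is the transpose of the matrix of $R$ in the basis $\{w_i\}$, and hence $(SR^T)_{ik}=\sum_j s_{ij}r_{kj}$. Therefore the system $\sum_j s_{ij}r_{kj}=\alpha\delta_{i,k}$ is exactly $SR^T=\alpha\cdot \mathrm{id}_V$.

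For the final clause, when $\alpha\neq 0$ the relation $S\circ R^T=\alpha\cdot \mathrm{id}_V$ forces $R^T$, and hence $R$, to be invertible (since in finite dimension a one-sided inverse is two-sided), and then $S=\alpha(R^T)^{-1}$.

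No step is really an obstacle; the only subtlety is the bookkeeping that $R^T$ defined as adjoint via $B$ corresponds in coordinates to the ordinary matrix transpose when the dual basis relation $B(v_i,w_j)=\delta_{i,j}$ holds. Once that bookkeeping is clear, the lemma follows immediately from the identification $V\otimes W\cong \mathrm{End}(V)$.
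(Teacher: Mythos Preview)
Your proof is correct and follows essentially the same approach as the paper: both use the identification $V\otimes W\cong \mathrm{End}(V)$ induced by $B$, under which $\sum_j v_j\otimes w_j$ corresponds to $\mathrm{id}_V$ and $S\otimes R$ acts by $\phi\mapsto S\circ\phi\circ R^T$. Your additional direct matrix computation is a nice supplementary check that the paper omits, but the underlying argument is the same.
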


\begin{proof}
	 The bilinear form $B$ defines an isomorphism $\iota:V\otimes W\to \mathrm{Hom}_k(V,V)$ by sending $v\otimes w$ to the linear map $\iota(v\otimes w)=i_{v\otimes w}:V\to V$ defined by $i_{v\otimes w}(x)=B(x,w)v$. Under this isomorphism, the inverse image of $\mathrm{id}_V$ is $\sum_{j=1}^{\ell}v_j\otimes w_j$. We have $(S\circ i_{v\otimes w}) (x) = B (x,w) S(v) = i_{S(v)\otimes w}$, and $(i_{v\otimes w}\circ S) (x) = B (S(x),w) v = B(x, S^T(w))	v = i_{v\otimes S^T(w)}$. Thus, we get that $S\otimes \mathrm{id}_W$ corresponds to postcomposition by $S$, and that $\mathrm{id}_V \otimes R$ corresponds to precomposition by $R^T$.
	 Hence, \cref{tensor_of_inv_id} is equivalent under the isomorphism $\iota$ to $S\circ  R^T = \alpha\cdot \mathrm{id}_V$, as requested.
\end{proof}

We now make use of \Cref{tensor_of_inv_id_lemma}  to prove the following lemma,
\begin{seclem}\label{stab_lemma}
	Let $h\in\mathrm{GL}_m(k)$, $g\in\mathrm{GL}_n(k)$, and
	\begin{equation}\label{specific_ev}
	\underline{v}:=(0_n,\ldots,0_n,e_n^T,e_{n-1}^T,\ldots,e_{n-r}^T)
	\end{equation}
	where $0_n$ is the row vector of $n$ zeros (it appears $m-r-1$ times in $\underline{v}$).
	
	Then, there is $\lambda\in k^\times$ such that $\underline{v} t\left(h, g\right) = \lambda \underline{v}$ iff $t\left(h, g\right)\in t_\Delta\left(P_{r+1}^m(k), P_{r+1}^n(k)\right)$.
\end{seclem}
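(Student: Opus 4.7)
The plan is to identify $k^{mn}$ with $k^m \otimes k^n$ so that $t(h,g)$ acts on row vectors as $h \otimes g$; concretely, $(e_a \otimes e_b)\cdot t(h,g) = (e_a h)\otimes (e_b g)$ for the standard basis rows $e_a \in k^m$, $e_b \in k^n$. Under this identification $\underline v = \sum_{j=1}^{r+1} e_{m-r-1+j}\otimes e_{n-j+1}$, and hence
$\underline v \cdot t(h,g) = \sum_{j=1}^{r+1}(e_{m-r-1+j}\, h)\otimes (e_{n-j+1}\, g),$
whose $i$-th $n$-block (for $1\le i\le m$) equals $\sum_{j=1}^{r+1} h_{m-r-1+j,\,i}\, g^{(n-j+1)}$, where $g^{(k)}$ denotes the $k$-th row of $g$. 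From this formula the ``if'' direction follows by a direct check, so I would concentrate on the ``only if'' direction.

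Assuming $\underline v\cdot t(h,g)=\lambda\underline v$, I would proceed in two steps according to the block index $i$. First, for $1\le i\le m-r-1$ the $i$-th block of $\lambda\underline v$ vanishes, and since $g\in\mathrm{GL}_n(k)$ the rows $g^{(n-j+1)}$ ($j=1,\ldots,r+1$) are linearly independent; hence $h_{m-r-1+j,\,i}=0$ for all such pairs $(i,j)$, which is precisely the statement that $h\in P^m_{r+1}(k)$. Write $h=\begin{pmatrix}A & B\\ 0 & D\end{pmatrix}$.

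Next, for $m-r\le i\le m$, setting $i=m-r-1+i'$, the block equation reads $\sum_{j=1}^{r+1} D_{j,i'}\, g^{(n-j+1)} = \lambda\, e_{n-i'+1}$. Splitting $g=\begin{pmatrix}a & b\\ c & d\end{pmatrix}$ into blocks of sizes $(n-r-1,\,r+1)$, the right-hand side is supported in the last $r+1$ coordinates. Equating the first-block components and reindexing $j\mapsto r+2-j$ gives $(\tilde w_{r+1}D)^T c = 0$; since $D$ is invertible this forces $c=0$, i.e.\ $g\in P^n_{r+1}(k)$. Equating the last-block components (with the same reindexing) yields $(\tilde w_{r+1}D)^T d = \lambda\,\tilde w_{r+1}$, and solving for $D$ produces $D = \lambda\, \tilde w_{r+1}\, d^{-T}\tilde w_{r+1} = \lambda d^{\ast}$, as required. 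The main bookkeeping obstacle is the reindexing $j\mapsto r+2-j$ forced by the decreasing order in which $e_n,e_{n-1},\ldots,e_{n-r}$ appear inside $\underline v$; this is exactly what introduces the anti-diagonal $\tilde w_{r+1}$ into the definition of $d^{\ast}$. As an alternative, once $h\in P^m_{r+1}(k)$ and $g\in P^n_{r+1}(k)$ have been established, this last step can be packaged cleanly as an application of \Cref{tensor_of_inv_id_lemma} to the subspaces $V'=\mathrm{span}(e_{m-r},\ldots,e_m)\subset k^m$ and $W'=\mathrm{span}(e_{n-r},\ldots,e_n)\subset k^n$ with the pairing $B(e_{m-r-1+i},\,e_{n-j+1})=\delta_{ij}$.
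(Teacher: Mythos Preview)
Your argument is correct and rests on the same idea as the paper---identify $k^{mn}$ with $k^m\otimes k^n$ and decompose the equation $\underline v\cdot t(h,g)=\lambda\underline v$ along block subspaces. The organizational difference is the order in which invertibility is obtained: you first read off from the blocks $1\le i\le m-r-1$ (using linear independence of the rows of $g$) that the bottom-left block of $h$ vanishes, whence $D\in\mathrm{GL}_{r+1}$ because $h$ is invertible, and only then attack the remaining blocks. The paper instead splits $k^m=E_1\oplus E_0$ and $k^n=E_1'\oplus E_0'$ simultaneously into four tensor components, applies \Cref{tensor_of_inv_id_lemma} with $\alpha=\lambda\ne 0$ to the $E_0\otimes E_0'$-piece to get $D,d$ invertible and $D=\lambda d^\ast$ in one stroke, and then uses that invertibility on the mixed components (with $\alpha=0$) to force $C=0$ and $c=0$. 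Your explicit identities $(\tilde w_{r+1}D)^T c=0$ and $(\tilde w_{r+1}D)^T d=\lambda\,\tilde w_{r+1}$ are exactly the concrete unwinding of the paper's appeal to \Cref{tensor_of_inv_id_lemma}; your route is a bit more elementary and self-contained, while the paper's packaging via the lemma avoids the reindexing bookkeeping you flag.
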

\begin{proof}
	Let $\{v_1,v_2,\ldots, v_m\}$ and $\{w_1,w_2,\ldots, w_n\}$ be the bases of standard row vectors of $k^m$ and $k^n$, respectively. 
	We take the basis of $k^m\otimes k^n$ in lexicographic order, i.e.
	\begin{equation}\label{lexic_basis}
	\left\{v_1\otimes w_1, \ldots, v_1\otimes w_n,\ldots,v_m\otimes w_1,\ldots, v_m\otimes w_n\right\}.
	\end{equation}
	We have 
	\begin{equation*}
	\underline{v}=\sum_{j=0}^{r}	v_{m-j}\otimes w_{n-r+j}.
	\end{equation*}
	
	Now, we write
	\begin{equation*}
	h=\begin{pmatrix}
	A&B\\C&D
	\end{pmatrix},\qquad 		g=\begin{pmatrix}
	a&b\\c&d
	\end{pmatrix},
	\end{equation*}
	where $D\in \mathrm{M}_{r+1}(k)$ and $d\in \mathrm{M}_{r+1}(k)$. Denote $E_0=\mathrm{Sp}\{v_{m-r},\ldots,v_m\}$, $E^\prime_0 = \mathrm{Sp}\{w_{n-r},\ldots,w_{n}\}$, $E_1=\mathrm{Sp}\{v_{1},\ldots,v_{m-r-1}\}$, and $E^{\prime}_1 = \mathrm{Sp}\{w_{1},\ldots,w_{n-r-1}\}$.
	The matrices $C,D$, and $c,d$ act from right on the space $k^{r+1}$. Let us denote the corresponding linear transformations $T_C$, $T_{c}$, $T_D$ and $T_{d}$, respectively, such that
	\begin{equation*}
	\begin{split}
	&T_C:E_0\to E_1,\qquad T_D:E_0\to E_0,\\
	&T_{c}:E^\prime_0\to E^\prime_1,\qquad T_{d}:E^\prime_0\to E^\prime_0.
	\end{split}
	\end{equation*}
	With this notation, the equation $\underline{v} t\left(h, g\right) = \lambda \underline{v}$ takes the form
	\begin{equation*}
	\sum_{j=0}^{r}	\left(T_C\left(v_{m-j}\right)+T_D\left(v_{m-j}\right)\right)\otimes \left(T_{c}\left(w_{n-r+j}\right)+T_{d}\left(w_{n-r+j}\right)\right) = 
	\lambda\sum_{j=0}^{r}	v_{m-j}\otimes w_{n-r+j}.
	\end{equation*}
	The tensor product is bilinear, $\underline{v}\in E_0\otimes E^\prime_0$ and we have
	\begin{equation}\label{system_of_condition_of_x}
	\begin{split}
	&\sum_{j=0}^{r} T_C\left(v_{m-j}\right)\otimes T_{c}\left(w_{n-r+j}\right) \in E_1\otimes E^\prime_1,\\
	&\sum_{j=0}^{r} T_C\left(v_{m-j}\right)\otimes T_{d}\left(w_{n-r+j}\right) \in E_1\otimes E^\prime_0,\\
	&\sum_{j=0}^{r} T_D\left(v_{m-j}\right)\otimes T_{c}\left(w_{n-r+j}\right) \in E_0\otimes E^\prime_1,\\
	&\sum_{j=0}^{r} T_D\left(v_{m-j}\right)\otimes T_{d}\left(w_{n-r+j}\right) \in E_0\otimes E^\prime_0.\\
	\end{split}
	\end{equation}
	We conclude, by comparing the coefficients of the basis, that \cref{system_of_condition_of_x} is equivalent to
	\begin{equation}\label{system_of_equations_of_x}
	\begin{cases}
	\sum_{j=0}^{r} T_C\left(v_{m-j}\right)\otimes T_{c}\left(w_{n-r+j}\right)=0,\\
	\sum_{j=0}^{r} T_C\left(v_{m-j}\right)\otimes T_{d}\left(w_{n-r+j}\right)=0,\\
	\sum_{j=0}^{r} T_D\left(v_{m-j}\right)\otimes T_{c}\left(w_{n-r+j}\right)=0,\\
	\sum_{j=0}^{r} T_D\left(v_{m-j}\right)\otimes T_{d}\left(w_{n-r+j}\right) = \lambda\sum_{j=0}^{r}	v_{m-j}\otimes w_{n-r+j}.
	\end{cases}
	\end{equation}
	We apply \Cref{tensor_of_inv_id_lemma} by taking $V=E_0$, $W=E^\prime_0$, $\ell =r+1$, $S=T_D$, $R=T_{d}$.
	We identify $E_0$ and $E^\prime_0$ with $k^{r+1}$ by the linear isomorphisms $\psi_{E_0}$ and $\psi_{E^\prime_0}$, defined by $\psi_{E_0}(v_{m-j})=e_{j+1}$ and $\psi_{E^\prime_0}(w_{n-r+j})=e_{j+1}$ for all $0\leq j\leq r$, where $\{e_1,\ldots,e_{r+1}\}$ is the standard basis of $k^{r+1}$.
	Then, $T_D(v)=\psi_{E_0}^{-1}(D\psi_{E_0}(v))$ for all $v\in E_0$ and $T_{d}(w)=\psi_{E^\prime_0}^{-1}(d \psi_{E^\prime_0}(w))$ for all $w\in E^\prime_0$.
	Now, $B$ in \Cref{tensor_of_inv_id_lemma} is given by
	\begin{equation*}
		B(v,w)=(\psi_{E_0}(v))^T \tilde{w}_r \psi_{E^\prime_0}(w),
	\end{equation*}
	where $v\in E_0$ and $w\in E^\prime_0$.
	Therefore, we have 
	\begin{equation*}
		B\left(T_D(v),w\right)=(\psi_{E_0}(v))^T D^T \tilde{w}_r \psi_{E^\prime_0}(w)=(\psi_{E_0}(v))^T\tilde{w}_r\tilde{w}_r^{-1} D^T \tilde{w}_r \psi_{E^\prime_0}(w).
	\end{equation*}
	The matrix $\tilde{D}:=\tilde{w}_r^{-1} D^T \tilde{w}_r$ defines a linear transformation $T_{\tilde{D}}$, such that, $T_{\tilde{D}}(w)=\psi_{E^\prime_0}^{-1}(\tilde{D} \psi_{E^\prime_0}(w))$ for all $w\in E^\prime_0$.
	Thus,
	\begin{equation*}
		B\left(T_D(v),w\right)=B\left(v,\psi_{E^\prime_0}^{-1}\left(\tilde{D} \psi_{E^\prime_0}(w)\right)\right)=B\left(v,T_{\tilde{D}}(w)\right).
	\end{equation*}
	
	Hence, $T_D^T(w)=T_{\tilde{D}}(w)$. By \Cref{tensor_of_inv_id_lemma}, the last equation in \cref{system_of_equations_of_x} is equivalent to $T_D\circ T_{d}^T=\lambda\mathrm{id}_{E_0}$ (by taking $\alpha=\lambda$), i.e. $T_D$ and $T_{d}$ are invertible, and $T_D=\lambda(T_{d}^T)^{-1}=\lambda T_{\tilde{d}}^{-1}$. So,
	\begin{equation*}
		D=\lambda\tilde{d}^{-1}=\lambda\tilde{w}_r^{-1}(d^{T})^{-1}\tilde{w}_r=\lambda d^{\ast},
	\end{equation*}
	as requested.
	Similarly, by \Cref{tensor_of_inv_id_lemma} (where we take $\alpha=0$), the second and the third equations in \cref{system_of_equations_of_x} are equivalent to $T_D\circ T_{d}^T=0$ and $T_D\circ T_{c}^T=0$. These last equations, in turn, are equivalent to say that both $T_C$ and $T_{c}$ are zero transformations, as $T_{d}$ and $T_D$ are invertible. i.e. $C=0$ and $c=0$, as requested.
	
\end{proof}

\begin{proof}[Proof of \Cref{stab_prop}]
	We first note that 
	\begin{equation*}
		Q^{\varepsilon_r} = P_{mn-1,1}^{\varepsilon_r} \cap T_{m,n}=\left(P_{mn-1,1} \cap T_{m,n}^{\varepsilon_r^{-1}}\right)^{\varepsilon_r}.
	\end{equation*}
	So, we find $P_{mn-1,1}(k) \cap T_{m,n}(k)^{\varepsilon_r^{-1}}$ and conjugate the result by $\varepsilon_r$. Let $X\in P_{mn-1,1}(k) \cap T_{m,n}(k)^{\varepsilon_r^{-1}}$. The condition $X\in  T_{m,n}(k)^{\varepsilon_r^{-1}}$ implies that there exist $h\in\mathrm{GL}_m(k)$ and $g\in\mathrm{GL}_n(k)$ such that 
	\begin{equation}\label{x_is_conj_tensor}
		X=\varepsilon_r t\left(h, g\right) \varepsilon_r^{-1}.
	\end{equation}
	On the other hand, the condition $X\in P_{mn-1,1}(k)$ yield that there exists $\lambda\in k^\times$ such that
	\begin{equation}\label{x_is_parab}
		e_{mn}^T X=\lambda e_{mn}^T,
	\end{equation}
	where $e_{mn}^T=(0,\ldots,0,1)$. Plugging \cref{x_is_conj_tensor} to \cref{x_is_parab} gives
	\begin{equation}\label{x_is_both}
		e_{mn}^T\varepsilon_r t\left(h, g\right)=\lambda e_{mn}^T\varepsilon_r.
	\end{equation}
	Recall that by \Cref{double_cosets_rep_thrm}, $\varepsilon_r=w_{(m-r)n}u_{(m-r)n}(\underline{b_r})$, where $\underline{b_r}:=\left(e_{n-1}^T,e_{n-2}^T,\ldots,e_{n-r}^T\right)$. Hence, \cref{x_is_both} becomes,
	\begin{equation}\label{x_is_both2}
		e_{mn}^T w_{(m-r)n}u_{(m-r)n}(\underline{b_r}) t\left(h, g\right)=\lambda e_{mn}^T w_{(m-r)n}u_{(m-r)n}(\underline{b_r}).
	\end{equation}
	Note that $e_{mn}^T w_{(m-r)n} = e_{(m-r)n}^T$ and that $e_{(m-r)n}^T u_{(m-r)n}(\underline{b_r})=\underline{v}$, where $\underline{v}$ is as in \cref{specific_ev}.
	Therefore, \cref{x_is_both2} can be written as
	\begin{equation*}
		\underline{v} t\left(h, g\right) = \lambda \underline{v}.
	\end{equation*}
	The proof now follows from \Cref{stab_lemma}.
\end{proof}

\subsection{Analysis of the contributions of the double cosets}\label{Only_one_cont_sec}
		
	We now plug \cref{unfold_g6} to our construction in \cref{eisen_def2}. By \Cref{double_cosets_rep_thrm}, we get that for $\Re(s)>>0$
	\begin{equation}\label{eisen_def3}
	\mathfrak{E}\left(f_{\omega_\pi,s},\varphi_{\pi}\right)(h)=\sum\limits _{r=0}^{n-1}\int \limits _{Z_n(\bba)\gl[k]{n}\backslash\gl{n}}\varphi_{\pi}(g) \sum \limits _{\gamma \in Q^{\varepsilon_r}(k)\backslash T_{m,n}(k)}  f_{\omega_\pi,s}\left(\varepsilon_r \gamma t\left(h,g\right)\right)dg.
	\end{equation}
	For all $0\leq r\leq n-1$ and all $n$, $\sum \limits _{\gamma}\left|  f_{\omega_\pi,s}\left(\varepsilon_r \gamma t\left(h,g\right)\right)\right|$ is of moderate growth in $g$, $\varphi_{\pi}$ is rapidly decreasing, and since $Z_n(\bba)\gl[k]{n}\backslash\gl{n}$ is of finite measure, the integral in \cref{eisen_def3} is absolutely convergent.
	In this section, we find the contribution of each double coset to the integral \cref{eisen_def3}. In \Cref{zero_cont_prop} we prove that all double cosets, except the one that corresponds to the open cell, contribute zero. We then analyze, in \Cref{const_is_eisen}, the contribution of the open cell, and show that it allows us to rewrite \cref{eisen_def3} in a form of an Eisenstein series on $\gl{m}$. In \Cref{left_p_trans_prop} we formally show that this series is the Eisenstein series on $\gl{m}$ corresponding to a section of $\mathrm{Ind}_{P_{m-n,n}(\bba)}^{\gl{m}}\left(1\otimes \tilde{\pi}\right) \delta_{P_{m-n,n}}^{s}$. In  \Cref{xi_is_section_prop} we state the last argument required for completing the proof of \Cref{main_thrm}.
	\begin{prop}\label{zero_cont_prop}
		Let $0\leq r \leq n-2$ and assume that $\Re(s)>>0$. Then,
			\begin{equation}\label{zero_cont_eq}
				\int \limits _{Z_n(\bba)\gl[k]{n}\backslash\gl{n}}\varphi_{\pi}(g) \sum \limits _{\gamma \in Q^{\varepsilon_r}(k)\backslash T_{m,n}(k)}  f_{\omega_\pi,s}\left(\varepsilon_r \gamma t\left(h,g\right)\right)dg=0.
			\end{equation}	
	\end{prop}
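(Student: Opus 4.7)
The strategy is to exhibit a nontrivial unipotent subgroup, namely $U_{n-r-1,r+1}$ (the unipotent radical of the parabolic $P_{r+1}^n=P_{n-r-1,r+1}$ of $\gl[k]{n}$), whose embedding $u\mapsto t(I_m,u)$ lands inside $Q^{\varepsilon_r}(\bba)$, and then to unfold the coset sum so as to isolate an inner integration $\int_{U_{n-r-1,r+1}(k)\backslash U_{n-r-1,r+1}(\bba)}\varphi_\pi(ug)\,du$ which vanishes by cuspidality. The hypothesis $0\leq r\leq n-2$ gives $n-r-1\geq 1$, which is exactly the condition ensuring that $P_{r+1}^n$ is a proper parabolic, so that its unipotent radical is nontrivial.

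The key inclusion $t(I_m,U_{n-r-1,r+1}(\bba))\subset Q^{\varepsilon_r}(\bba)$ follows directly from \Cref{stab_prop}: writing $u=\begin{pmatrix}I_{n-r-1}&y\\0&I_{r+1}\end{pmatrix}$, the choice $A=I_{m-r-1}$, $B=0$, $\lambda=1$, $d=I_{r+1}$, $a=I_{n-r-1}$, $b=y$ realizes $t(I_m,u)$ as an element of $t_\Delta(P_{r+1}^m(\bba),P_{r+1}^n(\bba))$. Consequently $\varepsilon_r t(I_m,u)\varepsilon_r^{-1}\in P_{mn-1,1}(\bba)$ is unipotent (as a conjugate of the unipotent element $t(I_m,u)$), so both $\delta_{P_{mn-1,1}}^{s+\nicefrac{1}{2}}$ and $1\otimes\omega_\pi^{-1}$ evaluate to $1$ on it, giving the left-invariance
\begin{equation*}
f_{\omega_\pi,s}\bigl(\varepsilon_r t(I_m,u)\varepsilon_r^{-1}\cdot x\bigr)=f_{\omega_\pi,s}(x).
\end{equation*}

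To bring this invariance to bear, I would first decompose $Q^{\varepsilon_r}(k)\backslash T_{m,n}(k)$ by lifting to $\gl[k]{m}\times\gl[k]{n}$ and projecting onto the $\gl[k]{n}$-factor, realizing the cosets as a fibration over $P_{r+1}^n(k)\backslash\gl[k]{n}$ whose fiber is an appropriate quotient of $\gl[k]{m}$. The inner sum in \cref{zero_cont_eq} then splits as a double sum indexed by $g_\gamma\in P_{r+1}^n(k)\backslash\gl[k]{n}$ in the outer index and $h_\gamma$ in the fiber, and the outer sum is absorbed into the integration domain via the substitution $g\mapsto g_\gamma^{-1}g$ together with the left $\gl[k]{n}$-invariance of $\varphi_\pi$, producing an integral over $Z_n(\bba)P_{r+1}^n(k)\backslash\gl{n}$. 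Using $P_{r+1}^n(k)=\mathrm{S}_{n-r-1,r+1}(k)\ltimes U_{n-r-1,r+1}(k)$ to extract an inner integration over $U_{n-r-1,r+1}(k)\backslash U_{n-r-1,r+1}(\bba)$, and noting that $t(h_\gamma,I_n)$ commutes with $t(I_m,u)$ so that the invariance above propagates through the remaining $h_\gamma$-sum, the inner integration collapses to $\int_{U_{n-r-1,r+1}(k)\backslash U_{n-r-1,r+1}(\bba)}\varphi_\pi(ug)\,du$, which is zero by cuspidality of $\varphi_\pi$ along the proper parabolic $P_{r+1}^n$. The main obstacles will be the precise identification of the fiber structure in the coset decomposition, and the measure-theoretic bookkeeping when factoring the integration domain; absolute convergence of every intermediate expression for $\Re(s)\gg 0$, as noted just after \cref{eisen_def3}, justifies all the interchanges of summation and integration.
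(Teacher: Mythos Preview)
Your proposal is correct and follows essentially the same route as the paper. The paper makes the fibration explicit by writing representatives of $Q^{\varepsilon_r}(k)\backslash T_{m,n}(k)$ as $t(\gamma_3\gamma_1,\gamma_2)$ with $\gamma_1\in P_{r+1}^m(k)\backslash\gl[k]{m}$, $\gamma_2\in P_{r+1}^n(k)\backslash\gl[k]{n}$, and $\gamma_3=\mathrm{diag}(I_{m-r-1},d)$ for $d\in Z_{r+1}(k)\backslash\gl[k]{r+1}$, then collapses the $\gamma_2$-sum into the $dg$-integral and isolates the constant term along $U_{r+1}^n$ exactly as you outline; your invariance argument for $f_{\omega_\pi,s}$ under $\varepsilon_r t(I_m,u)\varepsilon_r^{-1}$ is in fact stated more carefully than the paper's own phrasing.
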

	\begin{proof}
	 	Recall that \Cref{stab_prop} gives 
	 	\begin{equation*}
	 		Q^{\varepsilon_r} =t_\Delta\left(P_{r+1}^m(k), P_{r+1}^n(k)\right)\leq t\left(P_{r+1}^m(k), P_{r+1}^n(k)\right).
	 	\end{equation*}
	 	Therefore, we split the summation in \cref{zero_cont_eq} as follows,
		\begin{equation}\label{zero_cont_eq_2}
			\sum \limits _{\gamma \in  t\left(P_{r+1}^m(k), P_{r+1}^n(k)\right)\backslash T_{m,n}(k)} \sum \limits _{\gamma^\prime \in Q^{\varepsilon_r}(k)\backslash t\left(P_{r+1}^m(k), P_{r+1}^n(k)\right)}  f_{\omega_\pi,s}\left(\varepsilon_r \gamma^\prime \gamma t\left(h,g\right)\right).
		\end{equation}	
		We can write the representatives of  $t\left(P_{r+1}^m(k), P_{r+1}^n(k)\right)\backslash T_{m,n}(k)$ as $\gamma=t\left(\gamma_1,\gamma_2\right)$, where $\gamma_1\in P_{r+1}^m(k)\backslash \gl[k]{m}$, and $\gamma_2\in P_{r+1}^n(k)\backslash \gl[k]{n}$. 
		Similarly, we can write the representatives of $Q^{\varepsilon_r}(k)\backslash t\left(P_{r+1}^m(k), P_{r+1}^n(k)\right)$ as $\gamma^\prime=t\left(\gamma_3, I_n\right)$, where
		\begin{equation*}
			\gamma_3=\begin{pmatrix}
		I_{m-r-1}&0\\0&d\end{pmatrix},
		\end{equation*}
		such that $d\in Z_{r+1}(k)\backslash\gl[k]{r+1}$.
	
		With this notation, we can write the left hand side of \cref{zero_cont_eq} as follows.
		\begin{equation}\label{zero_cont_eq2}
			\int \limits _{Z_n(\bba)\gl[k]{n}\backslash\gl{n}}\varphi_{\pi}(g) \sum \limits _{\gamma_1 }\sum \limits _{\gamma_2} \sum \limits _{\gamma_3} f_{\omega_\pi,s}\left(\varepsilon_r t\left(\gamma_3 \gamma_1  h, \gamma_2 g\right) \right)dg.
		\end{equation}	
		By the absolute convergence of the integral, we may switch the order of the $dg$-integral with the summations over $\gamma_1$ and $\gamma_3$. By the automorphic property of $\varphi_{\pi}$, we can write $\varphi_{\pi}(g)=\varphi_{\pi}(\gamma g)$ , which allows us to collapse the sum over $\gamma_2$ and the integral. Thus, \cref{zero_cont_eq2} becomes
		\begin{equation}\label{zero_cont_eq3}
			\sum \limits _{\gamma_1 }\sum \limits _{\gamma_3}  \int \limits _{Z_n(\bba)  P_{r+1}^n(k)\backslash \gl{n}}\varphi_{\pi}(g) f_{\omega_\pi,s}\left(\varepsilon_r t\left(\gamma_1 \gamma_3 h, g\right) \right)dg.
		\end{equation}	
		We now focus on the inner integral in \cref{zero_cont_eq3}. We unfold it using the Levi decomposition $P_{r+1}^n= \mathrm{S}_{r+1}^n\ltimes U_{r+1}^n$,
		\begin{equation*}
			\int \limits _{Z_n(\bba)\mathrm{S}_{r+1}^n(k)U_{r+1}^n(\bba)\backslash\gl{n}}\int \limits _{U_{r+1}^n(k)\backslash U_{r+1}^n(\bba)}\varphi_{\pi}(ug) f_{\omega_\pi,s}\left(\varepsilon_r t\left(\gamma_1 \gamma_3 h, ug\right) \right)dudg.
		\end{equation*}
		Notice that, $t\left(\gamma_1 \gamma_3 h, ug\right)=t\left(I_m,u\right)t\left(\gamma_1 \gamma_3 h, g\right)$. Now, $t\left(I_m,u\right)$ commutes with $\varepsilon_r$, as it is in the stabilizer $Q^{\varepsilon_r}$, and the section $f_{\omega_\pi,s}$ is invariant under left multiplication by unipotent elements. So, we can extract an inner integration
		\begin{equation}\label{cusp_int_vanish}
			\int \limits _{U_{r+1}^n(k)\backslash U_{r+1}^n(\bba)}\varphi_{\pi}(ug) du=0,
		\end{equation}
		which vanishes by the cuspidality of $\varphi_{\pi}$, for all $0\leq r\leq n-2$.
	\end{proof}

	\begin{prop}\label{const_is_eisen}
		Assume that $\Re(s)>>0$. Then,
		\begin{equation}\label{eisen_id_eq}
		\mathfrak{E}\left(f_{\omega_\pi,s},\varphi_{\pi}\right)(h) = \sum\limits_{\gamma\in P_{m-n,n}(k)\backslash\gl[k]{m}}\xi\left(f_{\omega_\pi,s},\varphi_{\pi}\right)(\gamma h),
		\end{equation}
		where
		\begin{equation}\label{global_section_f}
		\xi\left(f_{\omega_\pi,s},\varphi_{\pi}\right)( h) = \int \limits _{Z_n(\bba)\backslash\gl{n}}\varphi_{\pi}(g)  f_{\omega_\pi,s}\left(\tilde{\varepsilon}   t\left(h,g\right)\right)dg.
		\end{equation}
		The integral \cref{global_section_f} converges absolutely for $\Re(s)>>0$.
	\end{prop}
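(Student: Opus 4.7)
The plan is to first invoke \Cref{zero_cont_prop} to collapse the sum in \cref{eisen_def3} to its $r=n-1$ term (the open cell), and then to unfold the resulting sum-integral by carefully parameterizing the coset space $Q^{\varepsilon_{n-1}}(k)\backslash T_{m,n}(k)$.

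Applying \Cref{zero_cont_prop}, the surviving expression is
\begin{equation*}
\mathfrak{E}\left(f_{\omega_\pi,s},\varphi_{\pi}\right)(h)=\int_{Z_n(\bba)\gl[k]{n}\backslash\gl{n}}\varphi_{\pi}(g)\sum_{\gamma\in Q^{\varepsilon_{n-1}}(k)\backslash T_{m,n}(k)}f_{\omega_\pi,s}\left(\varepsilon_{n-1}\gamma\, t(h,g)\right)dg.
\end{equation*}
By \Cref{stab_prop}, $Q^{\varepsilon_{n-1}}(k)=t_\Delta(P_{m-n,n}(k),\gl[k]{n})$, and I would next establish the bijection
\begin{equation*}
\left(P_{m-n,n}(k)\backslash\gl[k]{m}\right)\times\left(Z_n(k)\backslash\gl[k]{n}\right)\;\xrightarrow{\sim}\; Q^{\varepsilon_{n-1}}(k)\backslash T_{m,n}(k),\quad (\bar\gamma_1,\bar\gamma_2)\mapsto \overline{t(\gamma_1,\gamma_2)}.
\end{equation*}
To verify it, I pass to the presentation $T_{m,n}(k)=(\gl[k]{m}\times\gl[k]{n})/\ker t$ and note that the preimage $H$ of $Q^{\varepsilon_{n-1}}(k)$ in $\gl[k]{m}\times\gl[k]{n}$ consists of all $(p_1,d)$ with $p_1=\bigl(\begin{smallmatrix}A&B\\0&\lambda d^{\ast}\end{smallmatrix}\bigr)$; this $H$ contains $\ker t$, so $Q^{\varepsilon_{n-1}}(k)\backslash T_{m,n}(k)=H\backslash(\gl[k]{m}\times\gl[k]{n})$. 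Since $\lambda d^{\ast}$ ranges over all of $\gl[k]{n}$, the first coordinate of $H$ fills $P_{m-n,n}(k)$, so the left $H$-action reduces $\gamma_1$ modulo $P_{m-n,n}(k)$. Fixing $\gamma_1$, the residual stabilizer requires $p_1=I_m$, which in turn forces $d\in k^{\times}I_n$, so $\gamma_2$ is free modulo $Z_n(k)$.

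Substituting the parameterization and using $t(\gamma_1,\gamma_2)t(h,g)=t(\gamma_1 h,\gamma_2 g)$, the expression becomes
\begin{equation*}
\int_{Z_n(\bba)\gl[k]{n}\backslash\gl{n}}\varphi_\pi(g)\sum_{\bar\gamma_1}\sum_{\bar\gamma_2}f_{\omega_\pi,s}\left(\varepsilon_{n-1}t(\gamma_1 h,\gamma_2 g)\right)dg.
\end{equation*}
By absolute convergence of the original expression, I would interchange the $\bar\gamma_1$-sum with the integral. For the inner $\bar\gamma_2$-sum I insert $\varphi_\pi(g)=\varphi_\pi(\gamma_2 g)$ using the automorphy of $\varphi_\pi$ on $\gl[k]{n}$, and then collapse $\sum_{\bar\gamma_2\in Z_n(k)\backslash\gl[k]{n}}\int_{Z_n(\bba)\gl[k]{n}\backslash\gl{n}}dg$ into the single integral $\int_{Z_n(\bba)\backslash\gl{n}}dg$ (using $Z_n(k)\subseteq Z_n(\bba)$, so $Z_n(\bba)Z_n(k)=Z_n(\bba)$). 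This yields
\begin{equation*}
\mathfrak{E}\left(f_{\omega_\pi,s},\varphi_\pi\right)(h)=\sum_{\bar\gamma_1\in P_{m-n,n}(k)\backslash\gl[k]{m}}\int_{Z_n(\bba)\backslash\gl{n}}\varphi_\pi(g)f_{\omega_\pi,s}\left(\varepsilon_{n-1}t(\gamma_1 h,g)\right)dg,
\end{equation*}
which is $\sum_{\bar\gamma_1}\xi(f_{\omega_\pi,s},\varphi_\pi)(\gamma_1 h)$ upon setting $\tilde\varepsilon:=\varepsilon_{n-1}$ in \cref{global_section_f}.

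The main subtle point is the coset identification: the $d^{\ast}$ twist in the definition of $t_\Delta$ must be handled so that $(p_1,d)$ jointly realize precisely $P_{m-n,n}(k)$ on the first factor while leaving exactly $Z_n(k)$ as the residual stabilizer on the second. That leftover $Z_n(k)$ is exactly what permits enlarging the integration domain from $Z_n(\bba)\gl[k]{n}\backslash\gl{n}$ to $Z_n(\bba)\backslash\gl{n}$; the resulting integrand is well-defined on this larger domain because the central character $\omega_\pi(z)$ coming from $\varphi_\pi(zg)$ is cancelled by the $\omega_\pi^{-1}(z)$ coming from the inducing character of $f_{\omega_\pi,s}$. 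Absolute convergence of each individual $\xi$-integral then follows from the absolute convergence of the parent sum-integral.
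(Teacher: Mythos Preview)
Your proof is correct and follows essentially the same approach as the paper: invoke \Cref{zero_cont_prop} to isolate the open-cell term, parameterize $Q^{\tilde\varepsilon}(k)\backslash T_{m,n}(k)$, and collapse the $\gl[k]{n}$-sum into the $dg$-integral via automorphy of $\varphi_\pi$. The only cosmetic difference is that the paper parameterizes in two steps (first modulo $t(P_{m-n,n}(k),\gl[k]{n})$, then by $\gamma_3=\mathrm{diag}(I_{m-n},d)$ in the $\gl{m}$-slot, using the stabilizer relation to slide $d$ over to the $\gl{n}$-slot before collapsing), whereas you place $\gamma_2$ directly in the $\gl{n}$-slot from the start; your route avoids that extra shuffle and the central-character check you include makes the well-definedness on $Z_n(\bba)\backslash\gl{n}$ explicit.
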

	\begin{proof}
		By now, \cref{eisen_def3} is simplified to
	\begin{equation}\label{eisen_def4}
		\mathfrak{E}\left(f_{\omega_\pi,s},\varphi_{\pi}\right)(h)=\int \limits _{Z_n(\bba)\gl[k]{n}\backslash\gl{n}}\varphi_{\pi}(g) \sum \limits _{\gamma \in Q^{\tilde{\varepsilon}}(k)\backslash T_{m,n}(k)}  f_{\omega_\pi,s}\left(\tilde{\varepsilon} \gamma t\left(h,g\right)\right)dg,
	\end{equation}
	where $\tilde{\varepsilon}:=\varepsilon_{n-1}$, and $Q^{\tilde{\varepsilon}}(k)=t_\Delta\left(P_{n}^m(k), \gl[k]{n}\right)$.
	As in the beginning of the proof of \Cref{zero_cont_prop}, we split the summation in \cref{eisen_def4} as follows,
	\begin{equation}\label{eisen_def4.1}
	\sum \limits _{\gamma \in  t\left(P_{n}^m(k), \gl[k]{n}\right)\backslash T_{m,n}(k)} \sum \limits _{\gamma^\prime \in Q^{\tilde{\varepsilon}}(k)\backslash t\left(P_{n}^m(k), \gl[k]{n}\right)}  f_{\omega_\pi,s}\left(\tilde{\varepsilon} \gamma^\prime \gamma t\left(h,g\right)\right).
	\end{equation}	
	As in the last proof, we can write the representatives of  $t\left(P_{n}^m(k), \gl[k]{n}\right)\backslash T_{m,n}(k)$ as $\gamma=t\left(\gamma_1,I_n\right)$, where $\gamma_1\in P_{n}^m(k)\backslash \gl[k]{m}$. 
	We can also write the representatives of $Q^{\tilde{\varepsilon}}(k)\backslash t\left(P_{n}^m(k), \gl[k]{n}\right)$ as $\gamma^\prime=t\left(\gamma_3, I_n\right)$, where
	\begin{equation*}
	\gamma_3=\begin{pmatrix}
	I_{m-n}&0\\0&d\end{pmatrix},
	\end{equation*}
	such that $d\in Z_{n}(k)\backslash\gl[k]{n}$. With this notation, we can write \cref{eisen_def4.1} as follows.
	\begin{equation}\label{eisen_def4.2}
	\int \limits _{Z_n(\bba)\gl[k]{n}\backslash\gl{n}}\varphi_{\pi}(g) \sum \limits _{\gamma_1 }\sum \limits _{\gamma_3} f_{\omega_\pi,s}\left(\tilde{\varepsilon} t\left(\gamma_3 \gamma_1  h,  g\right) \right)dg.
	\end{equation}
	Let 
	\begin{equation*}
	\left(\gamma_3^{-1}\right)^*:=\begin{pmatrix}
	I_{m-n}&0\\0&\left(d^{-1}\right)^*\end{pmatrix}.
	\end{equation*}
	Then, $	f_{\omega_\pi,s}$ is invariant under left multiplication by the element $ t\left(\gamma_3^{-1}, \left(\gamma_3^{-1}\right)^* \right)$, which also commutes with $\tilde{\varepsilon}$. So,
	\begin{equation*}
		f_{\omega_\pi,s}\left(\tilde{\varepsilon} t\left(\gamma_3 \gamma_1  h,  g\right) \right) = f_{\omega_\pi,s}\left(\tilde{\varepsilon} t\left(\gamma_1  h, \left(\gamma_3^{-1}\right)^* g\right) \right).
	\end{equation*}
	
	As in the last proof, we switch the order of the $dg$-integration with the summation over $\gamma_1$, and then collapse the sum over $\gamma_3$ and the integral.	
	Thus, \cref{eisen_def4.2} gets a form of an Eisenstein series. i.e.
	\begin{equation}\label{eisen_def4.3}
		\mathfrak{E}\left(f_{\omega_\pi,s},\varphi_{\pi}\right)(h) = \sum \limits _{\gamma_1\in P_{n}^m(k)\backslash \gl[k]{m}}  \int \limits _{Z_n(\bba) \backslash \gl{n}}\varphi_{\pi}(g) f_{\omega_\pi,s}\left(\tilde{\varepsilon} t\left(\gamma_1  h, g\right) \right)dg.
	\end{equation}	
	By writing $\gamma_1=\gamma$ we get the requested result.
	We note that for $\Re(s)>>0$, 
	\begin{equation*}
		\sum \limits _{\gamma_1\in P_{n}^m(k)\backslash \gl[k]{m}}  \int \limits _{Z_n(\bba) \backslash \gl{n}}\left|\varphi_{\pi}(g) f_{\omega_\pi,s}\left(\tilde{\varepsilon} t\left(\gamma_1  h, g\right) \right)\right|dg< \infty
	\end{equation*}	
	\end{proof}
	The integral in \cref{global_section_f} lies in the space of $ \mathrm{Ind}_{P_{m-n,n}(\bba)}^{\gl{m}}\left(1\otimes \tilde{\pi}\right) \delta_{P_{m-n,n}}^{s}$. The next proposition prove this fact at the formal level. The precise meaning of this integral is shown in \Cref{local_sec}.

	\begin{prop}\label{left_p_trans_prop}
		Assume that $\Re(s)>>0$. Let $p\in P_{m-n,n}(\bba)$ be of the form
		\begin{equation*}
			p=\begin{pmatrix}
			A&B\\&D\end{pmatrix},
		\end{equation*}
		where $A\in\gl{m-n},\ D\in\gl{n}$ and $B\in \mathrm{M}_{m-n,n}(\bba)$. Then
		\begin{equation}\label{left_p_trans_eq}
			\xi\left(f_{\omega_\pi,s},\varphi_{\pi}\right)\left(ph\right) = \delta_{P_{m-n,n}}^{s+\nicefrac{1}{2}}\left(p\right) \int\limits_{Z_n(\bba)\backslash\gl{n}}\varphi_{\pi}(D^\ast g)f_{\omega_\pi,s}\left(\tilde{\varepsilon} t\left(h,g\right)\right)dg.
		\end{equation}
	\end{prop}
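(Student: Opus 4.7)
The plan is to absorb $p$ into the stabilizer $Q^{\tilde{\varepsilon}}$ using the Kronecker product, and then perform a change of variable in $g$. By the multiplicativity of the Kronecker product in \cref{kprod_prop},
\begin{equation*}
t(ph, g) = t(p, D^\ast) \, t\!\left(h, (D^\ast)^{-1} g\right).
\end{equation*}
The crucial observation is that $t(p, D^\ast) \in Q^{\tilde{\varepsilon}}(\bba)$. By the description in \Cref{stab_prop}, an element $t(h', g')$ belongs to $Q^{\tilde{\varepsilon}} = t_\Delta(P_{m-n,n}, \gl{n})$ precisely when $h'$ is block upper triangular with lower-right $n\times n$ block of the form $\lambda (g')^\ast$ for some $\lambda$. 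Since $\lambda (D^\ast)^\ast = \lambda D$, the pair $(p, D^\ast)$ satisfies this condition with $\lambda = 1$.

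Setting $q := \tilde{\varepsilon} \, t(p, D^\ast) \, \tilde{\varepsilon}^{-1} \in P_{mn-1,1}(\bba)$ and applying the left-transformation property of $f_{\omega_\pi,s}$,
\begin{equation*}
f_{\omega_\pi,s}\!\left(\tilde{\varepsilon} \, t(ph, g)\right) = \delta_{P_{mn-1,1}}^{s+\nicefrac{1}{2}}(q) \, \omega_\pi^{-1}(d_q) \, f_{\omega_\pi,s}\!\left(\tilde{\varepsilon} \, t(h, (D^\ast)^{-1} g)\right),
\end{equation*}
where $d_q$ denotes the $(mn,mn)$-entry of $q$. The scalar tracking inside the proof of \Cref{stab_prop} identifies $d_q$ with the same $\lambda$ as above, so $d_q = 1$ and hence $\omega_\pi^{-1}(d_q) = 1$. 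A direct determinant computation gives $\det q = \det t(p, D^\ast) = (\det A)^n (\det D)^{n-m}$; combined with $d_q = 1$ this produces
\begin{equation*}
\delta_{P_{mn-1,1}}(q) = |\det q| = |\det A|^n |\det D|^{-(m-n)} = \delta_{P_{m-n,n}}(p),
\end{equation*}
so the two modulus characters match on the nose.

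Substituting this identity into the defining integral of $\xi(f_{\omega_\pi,s}, \varphi_\pi)(ph)$ and performing the change of variable $g \mapsto D^\ast g$ — which is legitimate since the Haar measure on $\gl{n}(\bba)$ is left-invariant and left multiplication by $D^\ast$ commutes with the action of $Z_n(\bba)$, hence descends to $Z_n(\bba) \backslash \gl{n}(\bba)$ — yields exactly \cref{left_p_trans_eq}. The only delicate step is the scalar bookkeeping on $q$: tracing $\lambda = 1$ through the derivation of \Cref{stab_sec} to conclude both that $d_q = 1$ and that the modulus-character exponents on $P_{mn-1,1}$ and on $P_{m-n,n}$ line up after the computation above; the rest is a routine application of \cref{kprod_prop} and left-invariance of Haar measure.
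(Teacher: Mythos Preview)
Your proof is correct and follows essentially the same approach as the paper's: both recognize that $t(p,D^\ast)\in Q^{\tilde{\varepsilon}}$, conjugate by $\tilde{\varepsilon}$ to land in $P_{mn-1,1}$, verify that the $(mn,mn)$-entry equals $1$ (so the $\omega_\pi^{-1}$ factor is trivial), match the modulus characters via the determinant, and perform the change of variable $g\mapsto D^\ast g$. The only cosmetic difference is that the paper performs the change of variable first and then extracts the parabolic factor, whereas you do it in the reverse order.
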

	\begin{proof}
	We begin by the variables change $g\mapsto D^\ast g$.
	\begin{equation*}
		\xi\left(f_{\omega_\pi,s},\varphi_{\pi}\right)\left(ph\right) = \int\limits_{Z_n(\bba)\backslash	\gl{n}}\varphi_{\pi}(D^\ast g)f_{\omega_\pi,s}\left(\tilde{\varepsilon} t\left(ph,D^\ast g\right)\right) dg.
	\end{equation*}
	By the definition of $Q^{\tilde{\varepsilon}}$, the matrix $t\left(p,D^\ast \right)$ conjugated by $\tilde{\varepsilon}$, is an element in $P_{mn-1,1}$. Hence, we denote it by
	\begin{equation*}
	\begin{pmatrix}
	R&Y\\0&\alpha
	\end{pmatrix}:=\tilde{\varepsilon} t\left(p,D^\ast\right)\tilde{\varepsilon}^{-1},
	\end{equation*}
	where $R\in \gl{mn-1}$, $Y\in k^{mn-1}$, and $\alpha\in k^\times$.
	\begin{equation*}
	\begin{split}
		\xi\left(f_{\omega_\pi,s},\varphi_{\pi}\right)\left(ph\right) =&\omega_\pi ^{-1}(\alpha) \delta_{P_{mn-1,1}}^{s+\nicefrac{1}{2}}\left(\begin{pmatrix}
		R&Y\\0&\alpha
		\end{pmatrix}\right)\\&\cdot \int\limits_{Z_n(\bba)\backslash\gl{n}}\varphi_{\pi}(D^\ast g)f_{\omega_\pi,s}\left(\tilde{\varepsilon} t\left(h,g\right)\right)dg.
	\end{split}
	\end{equation*}
	By \cref{x_is_parab} we have $e_{mn}^T \tilde{\varepsilon} t\left(p,D^\ast\right)\tilde{\varepsilon}^{-1}=e_{mn}^T$. i.e., $\alpha e_{mn}^T=e_{mn}^T$. Thus, $\alpha=1$ and $\omega_\pi ^{-1}(\alpha) =1$.
	This also gives, $\left|\det(R)\right|=\left|\det\left(\tilde{\varepsilon} t\left(p,D^\ast\right)\tilde{\varepsilon}^{-1}\right)\right|$. Therefore,
	\begin{equation*}
	\begin{split}
		\delta_{P_{mn-1,1}}\left(\begin{pmatrix}
		R&Y\\0&\alpha
		\end{pmatrix}\right) =\left|\det(R)\right|&=\left|\det\left(p\right)\right|^n \left|\det\left(D^\ast\right)\right|^m\\& =\left| \det\left(A\right)\right|^n \left|\det\left(D\right)\right|^{-(m-n)}.
	\end{split}
	\end{equation*}
	This implies
	\begin{equation*}
		\delta_{P_{mn-1,1}}^{s+\nicefrac{1}{2}}\left(\tilde{\varepsilon} t\left(p,D^\ast\right)\tilde{\varepsilon}^{-1}\right) =\delta_{P_{m-n,n}}^{s+\nicefrac{1}{2}}\left(p\right).
	\end{equation*}
	The point now is that the function 
	\begin{equation*}
	D\mapsto \int\limits_{Z_n(\bba)\backslash	\gl{n}}\varphi_{\pi}(D^\ast g)f_{\omega_\pi,s}\left(\tilde{\varepsilon} t\left(h,g\right)\right) dg
	\end{equation*}
is formally a cusp form in the space of $\tilde{\pi}$, that is of the form $\varphi'_\pi (D^\ast)$ where $\varphi'_{\pi}$ is in the space of $\pi$.
	\end{proof}
	
	In order to complete the proof of \Cref{main_thrm}, we need the following proposition.
	\begin{prop}\label{xi_is_section_prop}
		The function on $\gl{m}$, $	\xi\left(f_{\omega_\pi,s},\varphi_{\pi}\right)$, defined for $\Re(s)$ sufficiently large by the integral \cref{global_section_f}, admits an analytic continuation to a
		meromorphic function of $s$ in the whole plane. It defines a smooth meromorphic
		section of
		\begin{equation}\label{ind_rep_eq}
			\rho_{\tilde{\pi},s}: = \mathrm{Ind}_{P_{m-n,n}(\bba)}^{\gl{m}}\left(1\otimes \tilde{\pi}\right) \delta_{P_{m-n,n}}^{s}.
		\end{equation}
		Thus, by \cref{eisen_id_eq}, $\mathfrak{E}\left(f_{\omega_\pi,s},\varphi_{\pi}\right)(h)$ is the Eisenstein series on $\gl{m}$, corresponding to the section $\xi\left(f_{\omega_\pi,s},\varphi_{\pi}\right)$ of $\rho_{\tilde{\pi},s}$.
	\end{prop}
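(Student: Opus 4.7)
The plan is to factorize the adelic integral in \cref{global_section_f} into a product of local integrals and apply the local results already established. Since $\pi$ decomposes as a restricted tensor product $\pi \cong \otimes_\nu \pi_\nu$ over the places of $k$, by linearity it suffices to treat a decomposable cusp form $\varphi_\pi$ and a factorizable section $f_{\omega_\pi,s} = \otimes_\nu f_{\omega_{\pi_\nu},s}$. Realizing $\varphi_\pi$ via the matrix coefficient pairing $\varphi_\pi(g) = \langle \pi(g)v_\pi,\check{v}_\pi\rangle$ for pure tensors $v_\pi = \otimes_\nu v_{\pi_\nu}$ and $\check{v}_\pi = \otimes_\nu \check{v}_{\pi_\nu}$, the integral \cref{global_section_f}, after pairing against $\check{v}_\pi$, factors in the region of absolute convergence as
\[
\prod_\nu \left\langle I\bigl(f_{\omega_{\pi_\nu},s},\, v_{\pi_\nu}\bigr)(h_\nu),\ \check{v}_{\pi_\nu}\right\rangle.
\]

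By \Cref{local_int_is_pol_prop}, each local factor is absolutely convergent for $\Re(s)\gg 0$ and admits a meromorphic continuation to $\bbc$. At almost all places $\pi_\nu$ is unramified and the data is spherical, in which case \Cref{unrm_calc_L_thrm} evaluates the local factor to $L(m(s+\tfrac{1}{2})-\tfrac{n-1}{2},\pi_\nu)/L(m(s+\tfrac{1}{2}),\omega_{\pi_\nu})$. Consequently, for factorizable data, the global integral becomes
\[
\frac{L^S\!\bigl(m(s+\tfrac{1}{2})-\tfrac{n-1}{2},\, \pi\bigr)}{L^S\!\bigl(m(s+\tfrac{1}{2}),\, \omega_\pi\bigr)} \cdot \prod_{\nu \in S} \left\langle I\bigl(f_{\omega_{\pi_\nu},s},\, v_{\pi_\nu}\bigr)(h_\nu),\ \check{v}_{\pi_\nu}\right\rangle,
\]
where $S$ is the finite set of archimedean and ramified places. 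Absolute convergence of the Euler product for $\Re(s)\gg 0$ follows from standard bounds on the Satake parameters of $\pi_\nu$; the meromorphic continuation to all of $\bbc$ is then inherited from the meromorphic continuations of $L(s,\pi)$ (Godement--Jacquet) and $L(s,\omega_\pi)$ (Tate/Hecke), together with the meromorphy of the finitely many local factors in $S$ supplied by \Cref{local_int_is_pol_prop}.

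The transformation law of $\xi(f_{\omega_\pi,s},\varphi_\pi)$ under $P_{m-n,n}(\bba)$ was derived formally in \Cref{left_p_trans_prop}; the manipulations there are rigorous in the region of absolute convergence, and the identity persists throughout the meromorphic continuation. Smoothness is inherited from the smoothness of $\varphi_\pi$ and $f_{\omega_\pi,s}$, while right $K_m$-finiteness comes from the sphericality of the data at almost all places. Together, these show that $\xi(f_{\omega_\pi,s},\varphi_\pi)$ is a smooth, meromorphic section of $\rho_{\tilde\pi,s}$.

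Finally, by \Cref{const_is_eisen}, the identity $\mathfrak{E}(f_{\omega_\pi,s},\varphi_\pi)(h) = \sum_\gamma \xi(f_{\omega_\pi,s},\varphi_\pi)(\gamma h)$ holds for $\Re(s)\gg 0$; since $\xi$ is now identified as a section of $\rho_{\tilde\pi,s}$, the right-hand side is, by definition, the Eisenstein series $\mathrm{E}_m\bigl(\xi(f_{\omega_\pi,s},\varphi_\pi)\bigr)$ on $\gl{m}$, which admits meromorphic continuation to all of $\bbc$ by the general theory of Eisenstein series. The principal obstacle is controlling the convergence of the Euler product uniformly in $h$ outside the initial convergence region, particularly at archimedean places, where \Cref{local_int_is_pol_prop} supplies only meromorphy rather than the rational-in-$q_\nu^{-s}$ structure available at non-archimedean places; this is handled by combining known growth estimates for the local zeta integrals with the absolute convergence of $L^S(s,\pi)$ in a suitable right half plane.
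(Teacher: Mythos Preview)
Your proposal is correct and follows the same approach as the paper, which devotes \Cref{local_sec} to exactly this program: factorize the global integral into local integrals, invoke the unramified computation (\Cref{unrm_calc_L_thrm}) at almost all places and \Cref{local_int_is_pol_prop} at the remaining ones, and combine with \Cref{left_p_trans_prop} and \Cref{const_is_eisen}. One small imprecision worth correcting: a cusp form $\varphi_\pi$ is not literally a matrix coefficient $\langle\pi(g)v_\pi,\check v_\pi\rangle$ with a factorizable $\check v_\pi$; the paper instead writes $\varphi_\pi(g)=\ell_\pi\bigl(\otimes'_\nu\,\pi_\nu(g_\nu)v_{\pi_\nu}\bigr)(I_n)$ and keeps the automorphic evaluation outside the local tensor product (see \cref{global_section_dec1}--\cref{global_section_dec3}), which is the cleaner way to justify the Euler factorization.
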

	The proof of \Cref{xi_is_section_prop} is given at the end of \Cref{local_sec}, where we first make sense of the integral in \cref{global_section_f}.

\section{Local result} \label{local_sec}

We begin this section by explaining how the global integral in \cref{global_section_f_intro} gives rise to corresponding local integrals.
Then, in \Cref{unram_sec} we explicitly compute these local integrals at the unramified places.
In \Cref{bad_places_sec} we show that, in the \enquote{bad} places, Archimedean or ramified non-Archimedean, these local integrals are meromorphic functions (rational functions in $q_\nu^\pm s$ in the $p$-adic case).

In the course of proving \Cref{main_thrm}, we showed that, for $\Re(s)>>0$, $h\in\gl{n}$
\begin{equation*}
	\mathfrak{E}\left(f_{\omega_\pi,s},\varphi_{\pi}\right)(h)=\sum\limits_{\gamma\in P_{m-n,n}(k)\backslash\gl[k]{m}}\xi\left(f_{\omega_\pi,s},\varphi_{\pi}\right)(\gamma h).
\end{equation*}
where (for $\Re(s)>>0$),
\begin{equation}\label{global_section2}
	\xi\left(f_{\omega_\pi,s},\varphi_{\pi}\right)( h) = \int \limits _{Z_n(\bba)\backslash\gl{n}}\varphi_{\pi}(g)  f_{\omega_\pi,s}\left(\tilde{\varepsilon}   t\left(h,g\right)\right)dg.
\end{equation}
We have seen formally that $\xi\left(f_{\omega_\pi,s},\varphi_{\pi}\right)\in \mathrm{Ind}_{P_{m-n,n}(\bba)}^{\gl{m}}\left(1\otimes \tilde{\pi}\right) \delta_{P_{m-n,n}}^{s}$.
In order to substantiate this statement, we carry out a local study at each place of \cref{global_section2}. We denote by $\pi_{\nu}$ the generic, irreducible, smooth representation of $\gl[k_{\nu}]{n}$ (with central character $\omega_{\pi_\nu}$), which is the local factor of $\pi$ at $\nu$. 
We write $\pi$ as a restricted tensor product of the local representations $\pi_{\nu}$, $\pi = \ell_{\pi} (\otimes_\nu ^\prime \pi_{\nu})$, where $\ell_{\pi}$ is a fixed isomorphism. Assume that the cusp form $\varphi_{\pi}$ is decomposable, i.e. $\varphi_{\pi}=\ell_{\pi}(\otimes_\nu ^\prime v_{\pi_{\nu}})$, where $v_{\pi_{\nu}}$ lies in the space of $\pi_{\nu}$. So,
\begin{equation}\label{cusp_form_dec}
	\varphi_{\pi}(g)=\left(\pi(g)\varphi_{\pi}\right)(I_n)=\left(\pi(g)\ell_{\pi}\left(\otimes_\nu ^\prime v_{\pi_{\nu}}\right)\right)(I_n)=\ell_{\pi}\left(\otimes_\nu ^\prime \pi_{\nu}(g_\nu)v_{\pi_{\nu}}\right)(I_n),
\end{equation}
where $g=\sideset{}{^\prime}\prod_{\nu} g_\nu\in \gl{n}$.
Assume that $f_{\omega_{\pi},s}$ is decomposable and corresponds to a tensor product of local sections $f_{\omega_{\pi_\nu},s}$ of
\begin{equation}\label{local_ind_space_from_omega}
	\rho_{\omega_{\pi_{\nu}},s}:=\mathrm{Ind}_{P_{mn-1,1}(k_\nu)}^{\gl[k_\nu]{mn}}\left(1\otimes \omega_{\pi_{\nu}}^{-1}\right) \delta_{P_{mn-1,1}}^{s}.
\end{equation}

Let us fix a finite set of places $S_0$, containing the Archimedean places, for which, $\pi_{\nu}$ is unramified. For all $\nu\notin S_0$, we fix a spherical vector $v^\circ_{\pi_{\nu}}\in V_{\pi_\nu}$ and $f^\circ_{\omega_{\pi_\nu},s}$, unramified and normalized, such that for $p\in P_{mn-1,1}(k_\nu)$ and $x\in K_{n,\nu}$, $f^\circ_{\omega_{\pi_\nu},s}(I_n)=1$.

Now we can interpret \cref{global_section2} via a product over the places of $k$.
Our assumptions together with \cref{cusp_form_dec} give
\begin{equation}\label{global_section_dec1}
\begin{split}
	\xi\left(f_{\omega_\pi,s},\varphi_{\pi}\right)( h) &= \int \limits _{Z_n(\bba)\backslash\gl{n}}\ell_{\pi}\left(\underset{\nu}{\otimes}^\prime\pi_{\nu}(g_\nu)v_{\pi_{\nu}}\right)(I_n) \sideset{}{^\prime}\prod_{\nu} f_{\omega_{\pi_\nu},s}\left(\tilde{\varepsilon}t\left(h_\nu,g_\nu\right)\right)dg
	\\& = \int \limits _{Z_n(\bba)\backslash\gl{n}}\ell_{\pi}\left[\underset{\nu}{\otimes}^\prime f_{\omega_{\pi_\nu},s}\left(\tilde{\varepsilon}   t\left(h_\nu,g_\nu\right)\right)\pi_{\nu}(g_\nu)v_{\pi_{\nu}} \right](I_n) dg,
\end{split}
\end{equation}
where $h=\sideset{}{^\prime}\prod_{\nu} h_\nu$.
\Cref{global_section_dec1} is defined by the following limit.
\begin{equation}\label{global_section_dec2}
\begin{split}
	\xi\left(f_{\omega_\pi,s},\varphi_{\pi}\right)( h) = \lim\limits_{S_0\subseteq S\nearrow} \ell_{\pi}\Big[&\underset{\nu\in S}{\otimes}\int \limits _{Z_n(k_\nu)\backslash\gl[k_\nu]{n}}f_{\omega_{\pi_\nu},s}\left(\tilde{\varepsilon}   t\left(h_\nu,g_\nu\right)\right)\pi_{\nu}(g_\nu)v_{\pi_{\nu}}  dg_\nu \\&\bigotimes \underset{\nu\notin S}{\otimes}	\int \limits _{\ocal_{\nu}^\times\backslash K_{n,\nu} }f^\circ_{\omega_{\pi_\nu},s}\left(\tilde{\varepsilon}   t\left(h_\nu,g_\nu\right)\right)\pi_{\nu}(g_\nu)v^\circ_{\pi_{\nu}}  dg_\nu\Big](I_n),
\end{split}
\end{equation}
where the limit is taken over increasing finite sets of places $S\supseteq S_0$.
For all $g_\nu\in K_{n,\nu}$ we have 
\begin{equation*}
	f^\circ_{\omega_{\pi_\nu},s}\left(\tilde{\varepsilon}   t\left(h_\nu,g_\nu\right)\right)= f^\circ_{\omega_{\pi_\nu},s}\left(\tilde{\varepsilon}   t\left(h_\nu,I_n\right)\right).
\end{equation*}
This gives
\begin{equation*}
	\int \limits _{\ocal_{\nu}^\times\backslash K_{n,\nu} }f^\circ_{\omega_{\pi_\nu},s}\left(\tilde{\varepsilon}   t\left(h_\nu,g_\nu\right)\right)\pi_{\nu}(g_\nu)v^\circ_{\pi_{\nu}}  dg_\nu=v^\circ_{\pi_{\nu}}  f^\circ_{\omega_{\pi_\nu},s}\left(\tilde{\varepsilon}   t\left(h_\nu,I_n\right)\right).
\end{equation*}
Therefore, we can rewrite \cref{global_section_dec2} as
\begin{equation}\label{global_section_dec3}
\begin{split}
\xi\left(f_{\omega_\pi,s},\varphi_{\pi}\right)( h) = \lim\limits_{S_0\subseteq S\nearrow} \ell_{\pi}\Big[&\underset{\nu\in S}{\otimes}\int \limits _{Z_n(k_\nu)\backslash\gl[k_\nu]{n}}f_{\omega_{\pi_\nu},s}\left(\tilde{\varepsilon}   t\left(h_\nu,g_\nu\right)\right)\pi_{\nu}(g_\nu)v_{\pi_{\nu}}  dg_\nu \\&\bigotimes \underset{\nu\notin S}{\otimes}f^\circ_{\omega_{\pi_\nu},s}\left(\tilde{\varepsilon}   t\left(h_\nu,I_n\right)\right)v^\circ_{\pi_{\nu}}  \Big](I_n).
\end{split}
\end{equation}
All in all, the inner local integrals in \cref{global_section_dec3} are the corresponding local integrals to our global construction. We denote them by 
\begin{equation}\label{local_int}
	I\left(f_{\omega_{\pi_\nu},s},v_{\pi_{\nu}}\right)(h_\nu)=\int \limits _{Z_n(k_\nu)\backslash\gl[k_\nu]{n}} f_{\omega_{\pi_\nu},s}\left(\tilde{\varepsilon}   t\left(h_\nu,g_\nu\right)\right)\pi_{\nu}(g_\nu)v_{\pi_{\nu}} dg_\nu.
\end{equation}

In this section we use the Cartan decomposition (modulo the center). We can write $g_\nu\in Z_n(k_\nu)\backslash\gl[k_{\nu}]{n}$ as 
 \begin{equation}\label{cartan_dec}
 g_\nu=A \underline{t}B,
 \end{equation}
 where $A,B\in K_{n,\nu}$ and $\underline{t}=\mathrm{diag}\left(t_1,\ldots,t_{n-1},1\right)$ such that for $\nu<\infty$,
 \begin{equation*}
 \underline{t}\in Z_n(k_\nu)\backslash T^-:=\left\{\mathrm{diag}\left(\pomega^{r_1},\ldots,\pomega^{r_{n-1}},1\right)\in \gl[k_\nu]{n}|\ r_1\geq\ldots \geq r_{n-1}\geq 0 \right\}
 \end{equation*}
 and for $\nu=\infty$,
  \begin{equation*}
 \underline{t}\in Z_n(k_\nu)\backslash T^-:=\left\{\mathrm{diag}\left(e^{r_1},\ldots,e^{r_{n-1}},1\right)\in \gl[k_\nu]{n}|\ r_1\geq\ldots \geq r_{n-1}\geq 0 \right\}.
 \end{equation*}
 In both cases $\underline{t}$ satisfies $|t_1|\leq\ldots\leq |t_{n-1}|\leq 1$.
 
 We also make use of the following integral formula. We refer to the paper \cite{ichino2010periods} for the statement of this claim
 in this explicit form.
 \begin{claim}[\cite{ichino2010periods} p.12]\label{kak_integral}
 	Let $G$ be a reductive algebraic group defined over $k_\nu$, and $G = KT^-K$ its Cartan decomposition. Let $f\in L^1(G)$. Then,
 	\begin{equation*}
 	\int\limits _{G} f(g)dg=\int\limits _{T^-} \mu(a)\int\limits_{K^2} f(k_1ak_2)dk_1dadk_2,
 	\end{equation*}
 	where $\mu(a)=\mu(KaK)/\mu(K)$ for non-Archimedean places $\nu$ and $\mu(a)\geq 0$ for the Archimedean places.
 \end{claim}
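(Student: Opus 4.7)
The plan is to reduce the integration formula to the case of bi-$K$-invariant integrands by averaging, and then exploit the parametrization of $K\backslash G/K$ by $T^-$ furnished by the Cartan decomposition.

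First I would use the left- and right-invariance of the Haar measure $dg$ on $G$, together with Fubini, to write
\begin{equation*}
	\int_G f(g)\,dg \;=\; \int_{K^2}\!\int_G f(k_1 g k_2)\,dg\,dk_1\,dk_2,
\end{equation*}
so that after replacing $f$ by its $K\times K$--average the claim reduces to the case where $f$ is constant on each double coset $KaK$. For such an $f$ the assertion is purely about how $dg$ disintegrates along the projection $G \to K\backslash G/K$. In the non-Archimedean case this image is discrete, parametrized by $T^-$; the cosets $KaK$ are disjoint of finite measure, and the identity becomes $\int_G f\,dg = \sum_{a\in T^-} f(a)\,\mu(KaK)$, which is exactly the claim with $\mu(a) = \mu(KaK)/\mu(K)$ once the inner double integral over $K^2$ is expanded (the extra factor of $\mu(K)$ absorbs the normalization on one $K$--factor). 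In the Archimedean case I would instead parametrize the regular part of $G$ by the smooth map $(k_1,a,k_2)\mapsto k_1 a k_2$ from $K\times T^- \times K$, compute its Jacobian from the root-space decomposition of $\mathfrak{g}$ with respect to $T$, and read off an explicit non-negative density $\mu(a)$ (essentially $\prod_{\alpha\in\Sigma^+} |\sinh\alpha(\log a)|^{m_\alpha}$ up to a positive constant), which is standard integration-in-polar-coordinates on a real reductive group.

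The main obstacle will be the Archimedean case: the product map $K\times T^-\times K \to G$ has non-trivial fibres (coming from the centralizer of $T$ in $K$) and is singular along the walls of $T^-$, so one must check that these loci are of measure zero and that the Jacobian calculation on the regular set yields the correct density after integrating out the $K$--isotropy. The non-Archimedean case is essentially combinatorial and follows once one verifies that $K\backslash G/K$ is parametrized by $T^-$ via the stated bijection, for which one may appeal to the standard Cartan theory for reductive $p$-adic groups. Since the paper only uses the formula as a change-of-variables tool for decomposable integrands, one could alternatively cite \cite{ichino2010periods} directly and restrict to the specific $G = \gl[k_\nu]{n}$ setting, where both $T^-$ and $\mu(a)$ admit the explicit form displayed around \eqref{cartan_dec}.
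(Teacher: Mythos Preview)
The paper does not prove this claim at all: it is stated as a quoted result, with the sentence ``We refer to the paper \cite{ichino2010periods} for the statement of this claim in this explicit form'' immediately preceding it, and no argument is given. The subsequent Claims (the explicit formulae for $\mu(a)$ in the non-Archimedean and Archimedean cases) are likewise just cited from \cite{macdonald1998symmetric} and \cite{helgason1984groups,knapp2001representation}.

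Your proposal therefore goes well beyond the paper. The sketch you give is the standard one and is essentially correct: averaging over $K\times K$ reduces to bi-$K$-invariant $f$, and then the non-Archimedean case is a discrete sum over $K\backslash G/K\cong T^-$ while the Archimedean case is the usual polar decomposition with Jacobian $\prod_{\alpha\in\Sigma^+}|\sinh\alpha(H)|^{m_\alpha}$. Your identification of the Archimedean subtleties (the centralizer $Z_K(T)$ in the fibre and the measure-zero walls) is accurate. Since the paper treats this purely as a black-box integration formula and only applies it to $\gl[k_\nu]{n}$, your final remark that one may simply cite the reference is exactly what the paper does.
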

 Moreover, we have the following formulae. By \cite[Chapter V, eq. (2.9)]{macdonald1998symmetric},
 \begin{claim}\label{jacobian_of_kak_int_non_arch}
 	Let $\nu<\infty$. Let $\underline{t}=\mathrm{diag}\left(\pomega^{r_1},\ldots,\pomega^{r_{n}}\right)\in T^-$.
 	Suppose that the integers $n_i$ are the lengths of constant runs in the sequence $(r_i)$, so that $n_1 +\ldots  + n_\ell = n$ 	and	
 	\begin{equation*}
 		r_1 =\ldots  = r_{n_1} > r_{n_1+1} = \ldots = r_{n_1+n_2} > r_{n_1+n_2+1} =\ldots
 	\end{equation*}
 	Then,
 	\begin{equation*}
 		\mu(\underline{t})=q_\nu ^{\sum_{i=1}^n(n-2i+1)r_i} \frac{\varphi_n(q^{-1})}{(1-q^{-1})^n} \prod_{j=1}^\ell \frac{(1-q^{-1})^{n_j}}{\varphi_{n_j}(q^{-1})},
	\end{equation*}
 	where as usual $\varphi_j(t)=(1-t)(1-t^2)\cdots(1-t^j)$.
 \end{claim}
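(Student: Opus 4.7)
The plan is to reduce the claim to a lattice-counting problem over the residue field and then invoke the explicit formula from \cite[Chapter V, (2.9)]{macdonald1998symmetric}. Concretely, writing $K:=K_{n,\nu}$, I would first identify
\begin{equation*}
  \mu(\underline{t}) \;=\; \mu(K\underline{t}K)/\mu(K) \;=\; |K\backslash K\underline{t}K|,
\end{equation*}
i.e., the number of left $K$-cosets contained in the double coset $K\underline{t}K$. Via the standard bijection between $K\backslash \gl[k_\nu]{n}/K$ and sequences of invariant factors (Smith normal form), this in turn equals the number of $\ocal_\nu$-lattices $L \subset k_\nu^n$ whose position relative to $\ocal_\nu^n$ has elementary divisors $(\pomega^{r_1},\ldots,\pomega^{r_n})$.

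Next I would count these lattices by parametrizing them via partial flag data in the $\ocal_\nu/\pomega^{r_1}\ocal_\nu$-module $\ocal_\nu^n/\pomega^{r_1}\ocal_\nu^n$. The stabilizer in $K$ of the coset $\underline{t}K$ is the preimage of the standard block-upper-triangular parabolic subgroup of $\gl[\bbf_{q_\nu}]{n}$ associated to the composition $(n_1,\ldots,n_\ell)$ recording the constant runs of $(r_i)$. Using the identity $|\gl[\bbf_{q_\nu}]{r}|=q_\nu^{r^2}\varphi_r(q_\nu^{-1})$ together with its analogue for the Levi subgroup $\gl[\bbf_{q_\nu}]{n_1}\times\cdots\times\gl[\bbf_{q_\nu}]{n_\ell}$ yields the combinatorial factor
\begin{equation*}
  \frac{\varphi_n(q_\nu^{-1})}{(1-q_\nu^{-1})^n}\prod_{j=1}^\ell \frac{(1-q_\nu^{-1})^{n_j}}{\varphi_{n_j}(q_\nu^{-1})},
\end{equation*}
while the remaining contribution from the unipotent part of the Iwahori factorization produces the power $q_\nu^{\sum_{i=1}^n(n-2i+1)r_i}$. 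I would verify this last exponent by recognizing it as $2\langle \rho, \mathrm{val}(\underline{t})\rangle$, where $\rho$ is the half-sum of positive roots of $\gl[k_\nu]{n}$; this is precisely the modular-character shift produced when one integrates over the translated unipotent radical in the Iwahori factorization of $K\underline{t}K/K$.

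The main obstacle is purely bookkeeping: verifying that the $q_\nu$-exponents, the $\varphi_j$ factors, and the normalization of $\mu$ agree with the conventions of \cite{macdonald1998symmetric}. Since the statement is a transcription of Macdonald's formula into our notation, once the setup above is in place the identity follows immediately, so in the paper I would simply cite the reference rather than reproduce the full combinatorial argument.
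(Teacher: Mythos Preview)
Your proposal is correct and matches the paper's approach: the paper does not prove this claim but simply states it as a quotation of \cite[Chapter V, eq.~(2.9)]{macdonald1998symmetric}, and you arrive at the same conclusion, namely to cite Macdonald rather than reproduce the combinatorial argument. The sketch you give of the underlying lattice-counting argument is accurate and would serve as a proof if one were required, but as you note, it is not.
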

 By \cite[Section I.5, Theorem 5.8]{helgason1984groups} or by \cite[Proposition 5.28]{knapp2001representation},
 \begin{claim}\label{jacobian_of_kak_int_arch}
 	Let $\nu=\infty$. Let $a\in T^-$. Set $H$ in the Lie algebra of $T^-$ such that $a=e^H$. Then,
 	\begin{equation*}
 		\mu(a)= \sum_{\alpha\in \Sigma _+}\left| \sinh \alpha(H)\right|^{\dim \mathfrak{g}_\alpha}.
	\end{equation*}
 \end{claim}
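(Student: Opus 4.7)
This is the classical volume formula for the polar (KAK) decomposition of a real reductive Lie group, and I would follow the derivation in \cite{helgason1984groups,knapp2001representation}.

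The plan is to realize the decomposition as a smooth map $\phi \colon K \times A^- \times K \to G$, $(k_1,a,k_2) \mapsto k_1 a k_2$, which is a submersion over the regular set. Since Haar measure on $G$ is bi-$K$-invariant, the density $\mu(a)$ is the Jacobian of $\phi$ at a base point $(e, a, e)$ with $a = e^H$, computed relative to the chosen invariant measures on $K$, $A$, and $G$. Trivializing the tangent bundle of $G$ by left translation and identifying $T_e K = \mathfrak{k}$, $T_a A = \mathfrak{a}$, the differential $d\phi_{(e,a,e)}$ becomes the linear map
\[
(X, H', Z) \longmapsto \mathrm{Ad}(a^{-1}) X + H' + Z
\]
from $\mathfrak{k} \oplus \mathfrak{a} \oplus \mathfrak{k}$ to $\mathfrak{g}$.

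Next I would use the restricted root space decomposition $\mathfrak{g} = \mathfrak{m} \oplus \mathfrak{a} \oplus \bigoplus_{\alpha \in \Sigma}\mathfrak{g}_\alpha$ together with the Cartan involution $\theta$, which gives $\mathfrak{k} = \mathfrak{m} \oplus \bigoplus_{\alpha \in \Sigma^+}(1+\theta)\mathfrak{g}_\alpha$. The $\mathfrak{m}$-direction accounts for the ambiguity in the first $K$-factor (hence the passage $K \leftrightarrow K/M$ in the decomposition); on its complement the differential block-diagonalizes over the positive restricted roots. On the block attached to $\alpha \in \Sigma^+$, $\mathrm{Ad}(a^{-1})$ acts on $\mathfrak{g}_{\pm\alpha}$ by the eigenvalues $e^{\mp\alpha(H)}$, and combining with the parallel copy of the same root space inside the second $\mathfrak{k}$-factor produces a determinant of $\bigl(e^{\alpha(H)} - e^{-\alpha(H)}\bigr)^{\dim \mathfrak{g}_\alpha}$ on the $2\dim \mathfrak{g}_\alpha$-dimensional block. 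Assembling these contributions across all $\alpha \in \Sigma^+$ and taking absolute values then yields the claimed density, where each $e^{\alpha(H)} - e^{-\alpha(H)}$ is absorbed into $|\sinh \alpha(H)|$ up to a universal constant.

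The step requiring the most care is tracking the quotient by $M = Z_K(A)$ on both $K$-factors and fixing the normalization of the Haar measures on $K$, $A$, and $G$ so that $\mu(a)$ coincides with the density built into \Cref{kak_integral}; this is the bookkeeping that I would lean most heavily on the cited references for, rather than redo by hand.
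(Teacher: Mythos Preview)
Your sketch is correct and follows exactly the classical argument in the two references the paper cites; the paper itself gives no proof beyond pointing to \cite[Section I.5, Theorem 5.8]{helgason1984groups} and \cite[Proposition 5.28]{knapp2001representation}. One minor remark: your derivation produces a \emph{product} $\prod_{\alpha\in\Sigma^+}|\sinh\alpha(H)|^{\dim\mathfrak{g}_\alpha}$, which is the correct formula in those references, whereas the displayed statement has $\sum$ --- this appears to be a typo in the paper rather than an error on your part.
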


\subsection{Unramified computation}\label{unram_sec}
In this section we prove \Cref{unrm_calc_GJ_thrm} by computing the local integrals \cref{local_int} at the unramified places $\nu$. i.e. 
\begin{equation}\label{local_int_unr}
I\left(f_{\omega_{\pi_\nu},s}^\circ,v^\circ_{\pi_{\nu}}\right)(h_\nu)=\int \limits _{Z_n(k_\nu)\backslash\gl[k_\nu]{n}}f_{\omega_{\pi_\nu},s}^\circ\left(\tilde{\varepsilon}   t\left(h_\nu,g_\nu\right)\right)\pi_{\nu}(g_\nu)v^\circ_{\pi_{\nu}}  dg_\nu.
\end{equation}

As part of the proof of \Cref{unrm_calc_GJ_thrm}, we also conclude by the end of this section that
\begin{seclem}\label{local_is_unr}
	$I\left(f_{\omega_{\pi_\nu},s}^\circ,v^\circ_{\pi_{\nu}}\right)$ is an unramified section of $\mathrm{Ind}_{P_{m-n,n}(k_\nu)}^{\gl[k_\nu]{m}}\left(1\otimes \tilde{\pi}_\nu\right) \delta_{P_{m-n,n}}^{s}$.
\end{seclem}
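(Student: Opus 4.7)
\textbf{Proof plan for \Cref{local_is_unr}.} The plan is to verify two properties of the map $h_\nu \mapsto I(f^\circ_{\omega_{\pi_\nu},s},v^\circ_{\pi_\nu})(h_\nu)$: that it transforms under the left action of $P_{m-n,n}(k_\nu)$ as required of a section of $\rho_{\tilde{\pi}_\nu,s}$, and that it is invariant under right translation by $K_{m,\nu}$. Together these two properties are precisely the defining conditions of an unramified section.

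For the transformation property, I would repeat the argument of \Cref{left_p_trans_prop} at the local level, since no automorphic input was used in that proof --- only the structural fact that $\tilde{\varepsilon}\,t(p,D^{\ast})\,\tilde{\varepsilon}^{-1}$ lies in $P_{mn-1,1}$ with bottom-right entry equal to $1$, together with the quasi-invariance of $f_{\omega_{\pi_\nu},s}$ under $P_{mn-1,1}(k_\nu)$. For $p=\begin{pmatrix}A&B\\ &D\end{pmatrix}\in P_{m-n,n}(k_\nu)$, performing the substitution $g_\nu\mapsto D^{\ast}g_\nu$ in the defining integral \cref{local_int} for $I(\cdot)(ph_\nu)$, combined with the computation $\delta_{P_{mn-1,1}}^{s+\nicefrac{1}{2}}\bigl(\tilde{\varepsilon}\,t(p,D^{\ast})\,\tilde{\varepsilon}^{-1}\bigr)=\delta_{P_{m-n,n}}^{s+\nicefrac{1}{2}}(p)$ and the trivial character value $\omega_{\pi_\nu}^{-1}(1)=1$, yields $I(\cdot)(ph_\nu)=\delta^{s+\nicefrac{1}{2}}_{P_{m-n,n}}(p)\,(1\otimes \tilde{\pi}_\nu)(p)\,I(\cdot)(h_\nu)$, where $\tilde{\pi}_\nu(D)$ is realized as $\pi_\nu(D^{\ast})$ via the usual contragredient identification.

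For the right $K_{m,\nu}$-invariance, I would use the Kronecker product identities in \eqref{kprod_prop}: for any $k\in K_{m,\nu}$, $t(h_\nu k,g_\nu)=t(h_\nu,I_n)\,t(k,I_n)\,t(I_m,g_\nu)=t(h_\nu,g_\nu)\,t(k,I_n)$, since $t(k,I_n)$ commutes with $t(I_m,g_\nu)$. The element $t(k,I_n)$ lies in $K_{mn,\nu}$: at a non-Archimedean place, its entries lie in $\ocal_\nu$ with determinant $(\det k)^n\in\ocal_\nu^{\times}$, and at an Archimedean place $t(k,I_n)^{T}t(k,I_n)=t(k^{T}k,I_n)=I_{mn}$ in the orthogonal case, with the analogous identity in the unitary case. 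Since $f^\circ_{\omega_{\pi_\nu},s}$ is right $K_{mn,\nu}$-invariant, the integrand is unchanged under $h_\nu\mapsto h_\nu k$, hence so is the integral.

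The main bookkeeping point, rather than a genuine obstacle, is matching the codomain: the integral naturally produces a vector in $V_{\pi_\nu}$, whereas a section of $\rho_{\tilde{\pi}_\nu,s}$ should take values in $V_{\tilde{\pi}_\nu}$. This is handled by the standard identification $\tilde{\pi}_\nu(D)\leftrightarrow \pi_\nu(D^{\ast})$, which aligns precisely with the action appearing in the first step. As a sanity check, combining the two steps at $h_\nu=I_m$ with a Levi element $p=\mathrm{diag}(I_{m-n},k)\in K_{m,\nu}\cap P_{m-n,n}(k_\nu)$ (for $k\in K_{n,\nu}$) forces $I(\cdot)(I_m)$ to be a spherical vector in $V_{\tilde{\pi}_\nu}$, consistent with the fact that the whole section is determined by its value at the identity up to the prescribed transformation law.
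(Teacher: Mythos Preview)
Your proposal is correct and follows essentially the same approach as the paper: right $K_{m,\nu}$-invariance is deduced from the unramifiedness of $f^\circ_{\omega_{\pi_\nu},s}$ (via $t(h_\nu k,g_\nu)=t(h_\nu,g_\nu)\,t(k,I_n)$ with $t(k,I_n)\in K_{mn,\nu}$), and the $P_{m-n,n}(k_\nu)$-equivariance is the local version of \Cref{left_p_trans_prop}. The paper adds one extra observation you treat only as a sanity check: in the course of proving \Cref{unrm_calc_GJ_thrm} it is shown explicitly (\cref{local_int_unr_split4}) that $I(f^\circ_{\omega_{\pi_\nu},s},v^\circ_{\pi_\nu})(I_m)=c(I,s)\,v^\circ_{\pi_\nu}$, which confirms concretely that the value at the identity is the spherical vector---but as you note, this already follows from the two properties you establish.
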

In the meantime, we note that $I\left(f_{\omega_{\pi_\nu},s}^\circ,v^\circ_{\pi_{\nu}}\right)$ is left-invariant under $K_{m,\nu}$ by the fact that $f_{\omega_{\pi_\nu},s}^\circ$ is unramified. In addition, a similar (local) computation to \Cref{left_p_trans_prop} formally gives that  $I\left(f_{\omega_{\pi_\nu},s}^\circ,v^\circ_{\pi_{\nu}}\right)$ lies in   $\mathrm{Ind}_{P_{m-n,n}(k_\nu)}^{\gl[k_\nu]{m}}\left(1\otimes \tilde{\pi}_\nu\right) \delta_{P_{m-n,n}}^{s}$. Therefore, by writing the Iwasawa decomposition $h_\nu=h_P h_K$, where $h_P\in P_{m-n,n}(k_\nu)$ and $h_K\in K_{m}$, we get that it is sufficient to evaluate $I\left(f_{\omega_{\pi_\nu},s}^\circ,v^\circ_{\pi_{\nu}}\right)$ at $I_m$.

Before proving \Cref{unrm_calc_GJ_thrm}, we find in \Cref{local_int_cartan} and \Cref{sec_val_on_tor} the value of $f_{\omega_{\pi_\nu},s}^\circ\left(\tilde{\varepsilon}   t\left(I_m,g_\nu\right)\right)$, as an expression of $\underline{t}$.

\begin{seclem}\label{local_int_cartan}
	Let  $g_\nu=A \underline{t} B$, be the Cartan decomposition (modulo the center) of $g_\nu$ as in \cref{cartan_dec}. Then,
	\begin{equation*}
	f_{\omega_{\pi_\nu},s}^\circ\left(\tilde{\varepsilon}   t\left(I_m,g_\nu\right)\right)=	f_{\omega_{\pi_\nu},s}^\circ\left(\tilde{\varepsilon} t\left(I_m, \underline{t} \right)\right).
	\end{equation*}
	
\end{seclem}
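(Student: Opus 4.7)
My plan is to invoke the two defining properties of the spherical section $f^\circ := f^\circ_{\omega_{\pi_\nu},s}$: it is right $K_{mn,\nu}$-invariant, and on the left it transforms under $P_{mn-1,1}(k_\nu)$ by the character $\delta^{s+1/2}_{P_{mn-1,1}}\cdot(1\otimes\omega_{\pi_\nu}^{-1})$. Writing $g_\nu = A\underline{t}B$ with $A,B\in K_{n,\nu}$, the factor $B$ is handled immediately: $t(I_m,B)\in K_{mn,\nu}$ because the Kronecker product of $I_m$ with a matrix in $K_{n,\nu}$ has entries in $\mathcal{O}_\nu$ and determinant in $\mathcal{O}_\nu^\times$, so right $K_{mn,\nu}$-invariance gives $f^\circ(\tilde{\varepsilon}\,t(I_m,A\underline{t}B)) = f^\circ(\tilde{\varepsilon}\,t(I_m,A\underline{t}))$. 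It then remains to prove, for any $A\in K_{n,\nu}$, that
\[
f^\circ\bigl(\tilde{\varepsilon}\,t(I_m,A\underline{t})\bigr) = f^\circ\bigl(\tilde{\varepsilon}\,t(I_m,\underline{t})\bigr).
\]

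For this I will perform an Iwasawa calculation. Writing $\tilde{\varepsilon}\,t(I_m,g') = p(g')\,k(g')$ with $p(g')\in P_{mn-1,1}(k_\nu)$ and $k(g')\in K_{mn,\nu}$, and setting $\beta(g') := p(g')_{mn,mn}$, we have $f^\circ(\tilde{\varepsilon}\,t(I_m,g')) = \delta^{s+1/2}_{P_{mn-1,1}}(p(g'))\cdot\omega^{-1}_{\pi_\nu}(\beta(g'))$. Since $\delta_{P_{mn-1,1}}(p) = |\det p|/|\beta|^{mn}$ and $|\det p(g')| = |\det\tilde{\varepsilon}|\cdot|\det t(I_m,g')| = |\det g'|^m$, this value depends on $g'$ only through $|\det g'|$, $|\beta(g')|$, and $\omega^{-1}_{\pi_\nu}(\beta(g'))$; the last depends only on the valuation $v(\beta(g'))$ because $\omega_{\pi_\nu}$ is unramified. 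Standard Iwasawa yields that $|\beta(g')|$ equals the sup-norm of the last row of $\tilde{\varepsilon}\,t(I_m,g')$: indeed the last row of $p(g')k(g')$ is $\beta(g')$ times the last row of $k(g')$, and the latter is a primitive vector in $\mathcal{O}_\nu^{mn}$, hence of sup-norm $1$. Both $|\det g'|$ and the sup-norm of a matrix are plainly $K_{n,\nu}$-biinvariant in $g'$. Since $\|\underline{t}\|_\infty = 1$ (because $\underline{t}_{n,n} = 1$), we get $|\beta(A\underline{t})| = 1 = |\beta(\underline{t})|$, so $\beta(A\underline{t})\in\mathcal{O}_\nu^\times$, and unramifiedness of $\omega_{\pi_\nu}$ forces $\omega_{\pi_\nu}^{-1}(\beta(A\underline{t})) = 1 = \omega_{\pi_\nu}^{-1}(\beta(\underline{t}))$; likewise $\delta_{P_{mn-1,1}}(p(A\underline{t})) = |\det\underline{t}|^m = \delta_{P_{mn-1,1}}(p(\underline{t}))$.

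The nontrivial input, which I expect to be the main obstacle, is identifying the last row of $\tilde{\varepsilon}\,t(I_m,g')$ and verifying that its sup-norm equals $\|g'\|_\infty$. Using the explicit formula $\tilde{\varepsilon} = w_{(m-n+1)n}\,u_{(m-n+1)n}(\underline{b_{n-1}})$ from \Cref{double_cosets_rep_thrm}, the last row of $\tilde{\varepsilon}$ equals $e^T_{(m-n+1)n}\cdot u_{(m-n+1)n}(\underline{b_{n-1}}) = (0,\dots,0,1,\underline{b_{n-1}})$. Reading the $mn$ coordinates as $m$ consecutive blocks of length $n$, this row places a single $1$ in each of the last $n$ blocks: in block $m-n+1+j$ (for $j=0,\dots,n-1$) the $1$ sits at position $n-j$. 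Right multiplication by the block-diagonal $t(I_m,g') = \mathrm{diag}(g',\dots,g')$ then causes the $1$ in block $m-n+1+j$ to extract row $n-j$ of $g'$. Hence the last row of $\tilde{\varepsilon}\,t(I_m,g')$ is the concatenation of zeros in the first $m-n$ blocks and the rows of $g'$, in reverse order, in the last $n$ blocks; its sup-norm is therefore exactly $\|g'\|_\infty$. Once this block-bookkeeping is done, the rest of the argument assembles routinely to yield the lemma.
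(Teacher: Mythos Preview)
Your proof is correct, but it takes a different route from the paper's.

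The paper argues structurally. For $x\in K_{n,\nu}$ it inserts the element $t(\mathrm{diag}(I_{m-n},x^*),I_n)$ on the right (harmless by right $K_{mn,\nu}$-invariance of $f^\circ$), obtaining $f^\circ(\tilde\varepsilon\,t(\mathrm{diag}(I_{m-n},x^*),xg_\nu))$. The key observation is that $t(\mathrm{diag}(I_{m-n},x^*),x)$ lies in the stabilizer $Q^{\tilde\varepsilon}=t_\Delta(P_n^m,\gl[k_\nu]{n})$ computed in \Cref{stab_prop}, so its $\tilde\varepsilon$-conjugate lies in $P_{mn-1,1}(k_\nu)\cap K_{mn,\nu}$, where the inducing character is trivial. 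Hence $f^\circ(\tilde\varepsilon\,t(I_m,xg_\nu))=f^\circ(\tilde\varepsilon\,t(I_m,g_\nu))$, and together with the obvious right invariance this gives bi-$K_{n,\nu}$-invariance in $g_\nu$.

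Your approach is an explicit Iwasawa computation: you identify $|\beta(g')|$ with the sup-norm of the last row of $\tilde\varepsilon\,t(I_m,g')$, compute that row block by block, and recognise it as a rearrangement of the rows of $g'$, whence its sup-norm is $\|g'\|_\infty$, a bi-$K_{n,\nu}$-invariant. This is entirely valid (and, as a bonus, essentially proves \Cref{sec_val_on_tor} along the way, since you show $|\beta(\underline t)|=1$). The trade-off is that the paper's argument is shorter, avoids any Iwasawa bookkeeping, and reuses the stabilizer machinery already developed for the global unfolding; the same trick is what drives the ramified analysis (cf.\ the proof of \Cref{local_int_split_prop}). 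Your argument, on the other hand, is self-contained and does not appeal to \S\ref{stab_sec}.
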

\begin{proof}
The section $f_{\omega_{\pi_\nu},s}^\circ$ is unramified, so, in particular, it is right-invariant under $t(\mathrm{diag}(I_{m-n},x^*),I_n)$, where $x\in K_{n,\nu}$. Thus,
\begin{equation*}
	f_{\omega_{\pi_\nu},s}^\circ\left(\tilde{\varepsilon}   t\left(I_m,xg_\nu\right)\right)=f_{\omega_{\pi_\nu},s}^\circ\left(\tilde{\varepsilon}   t\left(\mathrm{diag}(I_{m-n},x^*),xg_\nu\right)\right).
\end{equation*}
On the other hand, $t(\mathrm{diag}(I_{m-n},x^*),x)\in Q^{\tilde{\varepsilon}}(k_\nu)$, so its conjugation with $\tilde{\varepsilon}$ lies in $P_{mn-1,1}(k_\nu)\cap K_{mn,\nu}$. Since $f_{\omega_{\pi_\nu},s}^\circ$ is left invariant under  $P_{mn-1,1}(k_\nu)\cap K_{mn,\nu}$, we conclude that 
\begin{equation}\label{local_int_k_left_inv}
	f_{\omega_{\pi_\nu},s}^\circ\left(\tilde{\varepsilon}   t\left(I_m,xg_\nu\right)\right)=	f_{\omega_{\pi_\nu},s}^\circ\left(\tilde{\varepsilon}   t\left(I_m,g_\nu\right)\right).
\end{equation}

We conclude that the function $g_\nu\mapsto	f_{\omega_{\pi_\nu},s}^\circ\left(\tilde{\varepsilon}   t\left(I_m,g_\nu\right)\right)$ is bi-invariant under $K_{n,\nu}$. Therefore,
\begin{equation*}
	f_{\omega_{\pi_\nu},s}^\circ\left(\tilde{\varepsilon}   t\left(I_m,g_\nu\right)\right)=	f_{\omega_{\pi_\nu},s}^\circ\left(\tilde{\varepsilon}   t\left(I_m,A \underline{t} B\right)\right)=	f_{\omega_{\pi_\nu},s}^\circ\left(\tilde{\varepsilon} t\left(I_m, \underline{t} \right)\right).
\end{equation*}
\end{proof}

\begin{prop}\label{sec_val_on_tor}
	In the notation above,
	\begin{equation*}
		f_{\omega_{\pi_\nu},s}^\circ\left(\tilde{\varepsilon}   t\left(I_m,\underline{t}\right)\right)=\left|\det\underline{t}\right|^{ms+\frac{m}{2}}.
	\end{equation*}
\end{prop}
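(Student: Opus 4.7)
The strategy is to perform an Iwasawa-type decomposition of $\tilde{\varepsilon}\,t(I_m,\underline{t})$ with respect to $P_{mn-1,1}(k_\nu)\cdot K_{mn,\nu}$, and then apply the defining property of the normalized unramified section $f_{\omega_{\pi_\nu},s}^\circ$. Throughout, set $D:=t(I_m,\underline{t})$ and $j_0:=(m-n+1)n$, and write $\tilde{\varepsilon}=\varepsilon_{n-1}=w_{j_0}\,u$ with $u=u_{j_0}(\underline{b_{n-1}})$, following \Cref{double_cosets_rep_thrm}.

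The first step is to move the unipotent past $D$. The matrix $D$ is diagonal with entries given by $m$ consecutive copies of $(t_1,\ldots,t_{n-1},1)$; in particular its $j_0$-th diagonal entry is $1$ (the index $j_0$ is the last position of the $(m-n+1)$-st block). I unravel the structure of $\underline{b_{n-1}}=(e_{n-1}^T,e_{n-2}^T,\ldots,e_1^T)$ to see that the nonzero off-diagonal entries of $u$ sit in row $j_0$, and that the corresponding columns land in positions $n-1,n-2,\ldots,1$ of blocks $m-n+2,\ldots,m$ respectively; thus the columns where $u$ has nonzero off-diagonal entries are precisely columns whose $D$-entries are $t_{n-1},t_{n-2},\ldots,t_1$. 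Since $(D^{-1}uD)_{i,j}=D_{i,i}^{-1}u_{i,j}D_{j,j}$ and $D_{j_0,j_0}=1$, each such scaled entry is one of the $t_i$. As $|t_i|\leq 1$ for every $i$, I conclude $D^{-1}uD\in K_{mn,\nu}$.

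The second step is to handle the Weyl element. From $\tilde{\varepsilon}D=w_{j_0}D\cdot(D^{-1}uD)$ and $w_{j_0}D=(w_{j_0}Dw_{j_0}^{-1})\,w_{j_0}$, I observe that conjugation by the permutation $w_{j_0}$ only permutes the diagonal entries of $D$, and by tracking the cycle defining $w_{j_0}$ I see that the $(mn,mn)$-entry of $p:=w_{j_0}Dw_{j_0}^{-1}$ equals $D_{j_0,j_0}=1$. Hence $p$ is diagonal, lies in $P_{mn-1,1}(k_\nu)$ with bottom-right entry $1$, and $k_0:=w_{j_0}\cdot(D^{-1}uD)\in K_{mn,\nu}$, yielding the decomposition $\tilde{\varepsilon}D=p\,k_0$.

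Applying the transformation law of $f_{\omega_{\pi_\nu},s}^\circ$ now gives
\begin{equation*}
f_{\omega_{\pi_\nu},s}^\circ\bigl(\tilde{\varepsilon}\,t(I_m,\underline{t})\bigr)
=\delta_{P_{mn-1,1}}^{s+\nicefrac{1}{2}}(p)\,\omega_{\pi_\nu}^{-1}(1)\,f_{\omega_{\pi_\nu},s}^\circ(k_0)
=\delta_{P_{mn-1,1}}^{s+\nicefrac{1}{2}}(p).
\end{equation*}
Since $p$ is a permutation of the diagonal of $D$ with $1$ fixed in the last slot, the determinant of its upper-left $(mn-1)\times(mn-1)$ block equals $\det D=\det(I_m\otimes\underline{t})=(\det\underline{t})^m$; hence $\delta_{P_{mn-1,1}}(p)=|\det\underline{t}|^m$ and the proposition follows. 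The only nontrivial obstacle is the bookkeeping in the first step: one has to match the combinatorial description of $\underline{b_{n-1}}$ with the block pattern of the diagonal $D$ in order to see that every conjugated entry of $u$ is integral.
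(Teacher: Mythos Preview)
Your proof is correct and follows essentially the same approach as the paper: both decompose $\tilde{\varepsilon}\,t(I_m,\underline{t})$ as $(w_{j_0}Dw_{j_0}^{-1})\cdot w_{j_0}\cdot(D^{-1}uD)$, check that $D^{-1}uD$ has integral entries (since the conjugation scales the off-diagonal row of $u$ by the $t_i$'s), identify the $(mn,mn)$-entry of the diagonal matrix $w_{j_0}Dw_{j_0}^{-1}$ as $1$, and then read off the modulus character. Your combinatorial tracking of which columns of $u$ get scaled by which $t_i$ is a bit more explicit than the paper's, but the argument is the same.
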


\begin{proof}
	We denote $\underline{t}^\Delta:=t\left(I_m, \underline{t} \right)$.
	Recall that $\tilde{\varepsilon}=\tilde{w}\tilde{u}$ where we write $\tilde{w}:=w_{(m-n+1)n}$ and $\tilde{u}:=u_{(m-n+1)n}(\underline{b_{n-1}})$.
	In this notation we have,
	\begin{equation*}
		\tilde{\varepsilon} \underline{t}^\Delta=\left(\tilde{w}\underline{t}^\Delta\tilde{w}^{-1}\right)\tilde{w}\left(\left(\underline{t}^\Delta\right)^{-1}\tilde{u}\underline{t}^\Delta\right)=\left(\tilde{w}\underline{t}^\Delta\tilde{w}^{-1}\right)\tilde{w}u_{(m-n+1)n}(\underline{b_{n-1}}(\underline{t})),
	\end{equation*}
	where $\underline{b_{n-1}}(\underline{t})=\left(t_{n-1}\cdot e_{n-1}^T,\ldots,t_1\cdot e_{1}^T\right)$. All the elements above the diagonal of the unipotent matrix $u_{(m-n+1)n}(\underline{b_{n-1}}(\underline{t}))$ are bounded in absolute value by $1$. Therefore, $\tilde{w}u_{(m-n+1)n}(\underline{b_{n-1}}(\underline{t}))\in K_{mn,\nu}$, which implies
	\begin{equation}\label{conj_torus_by_perm}
		f_{\omega_{\pi_\nu},s}^\circ\left(\tilde{\varepsilon}  
		\right)=f_{\omega_{\pi_\nu},s}^\circ \left(\tilde{w}\underline{t}^\Delta\tilde{w}^{-1}\right).
	\end{equation}
	Since $\tilde{w}\underline{t}^\Delta\tilde{w}^{-1}$ equals  $\underline{t}^\Delta$ up to the order of the elements on the diagonal
	\begin{equation}\label{section_on_torus}
		f_{\omega_{\pi_\nu},s}^\circ \left(\tilde{w}\underline{t}^\Delta\tilde{w}^{-1}\right)=\omega^{-1}_{\pi_{\nu}}(\alpha)\delta_{P_{mn-1,1}}^{s+\frac{1}{2}}\left(\tilde{w}\underline{t}^\Delta\tilde{w}^{-1}\right),
	\end{equation}
	where $\alpha$ is the $(mn,mn)$-th coordinate of $\tilde{w}\underline{t}^\Delta\tilde{w}^{-1}$. It is given by
	\begin{equation*}
		e_{mn}^T\left(\tilde{w}\underline{t}^\Delta\tilde{w}^{-1}\right)e_{mn}=e_{(m-n+1)n}^T\underline{t}^\Delta e_{(m-n+1)n}.
	\end{equation*}
	i.e. it is the $((m-n+1)n,(m-n+1)n)$-th coordinate of $\underline{t}^\Delta$, which equals $1$. So $\alpha=1$ and
	\begin{equation}\label{omega_is_one}
		\omega^{-1}_{\pi_{\nu}}(\alpha)=1.
	\end{equation}
	This also gives that
	\begin{equation}\label{delta_is_det}
		\delta_{P_{mn-1,1}}\left(\tilde{w}\underline{t}^\Delta\tilde{w}^{-1}\right)=\left|\det \underline{t}^\Delta\right| = \left|\det \underline{t}\right|^m=\prod_{i=1}^{n-1}\left|t_i\right|^m.
	\end{equation}
	We now plug \cref{omega_is_one} and \cref{delta_is_det} in \cref{section_on_torus}. The proposition now follows by plugging this result in \cref{conj_torus_by_perm}.
\end{proof}

We are now ready to prove \Cref{unrm_calc_GJ_thrm}.
\begin{proof}[Proof of \Cref{unrm_calc_GJ_thrm}]
	We normalize the measure such that 
	\begin{equation}\label{k_mes_is_1}
		\mu(K_{n,\nu})=\int\limits_{K_{n,\nu}}dx=1.
	\end{equation}
	Let  $g_\nu=A \underline{t} B$, be the Cartan decomposition (modulo the center) of $g_\nu$ as in \cref{cartan_dec}.
	By \Cref{kak_integral} we can write \cref{local_int_unr} as
	\begin{equation}\label{local_int_unr_split0}
	I\left(f_{\omega_{\pi_\nu},s}^\circ,v^\circ_{\pi_{\nu}}\right)(I_m)=\int\limits _{Z_n(k_\nu)\backslash T^-}\mu(\underline{t}) \int\limits_{K_{n,\nu}^2}f_{\omega_{\pi_\nu},s}^\circ\left(\tilde{\varepsilon}   t\left(I_m,A\underline{t}B\right)\right)\pi_{\nu}(A\underline{t}B)v^\circ_{\pi_{\nu}}  dAdBd\underline{t},
	\end{equation}
	where $\mu(\underline{t})=\mu(K_{n,\nu}\underline{t}K_{n,\nu})$.
	We assume that $\Re(s)>>0$. By \Cref{local_int_cartan} we have $f_{\omega_{\pi_\nu},s}^\circ\left(\tilde{\varepsilon}   t\left(I_m,A\underline{t}B\right)\right)=f_{\omega_{\pi_\nu},s}^\circ\left(\tilde{\varepsilon}   t\left(I_m,\underline{t}\right)\right)$. In addition, $v^\circ_{\pi_{\nu}}$ is spherical so $\pi_{\nu}(A\underline{t}B)v^\circ_{\pi_{\nu}}=\pi_{\nu}(A\underline{t})v^\circ_{\pi_{\nu}}$.
	Therefore, \cref{local_int_unr} equals
	\begin{equation}\label{local_int_unr_split}
	I\left(f_{\omega_{\pi_\nu},s}^\circ,v^\circ_{\pi_{\nu}}\right)(I_m)=\int \limits _{Z_n(k_\nu)\backslash T^-}\mu(\underline{t})f_{\omega_{\pi_\nu},s}^\circ\left(\tilde{\varepsilon}   t\left(I_m,\underline{t}\right)\right)\int\limits_{K_{n,\nu}}\pi_{\nu}(A\underline{t})v^\circ_{\pi_{\nu}}  dAd\underline{t}.
	\end{equation}
	
	Applying \Cref{sec_val_on_tor} gives
	\begin{equation}\label{local_int_unr_split2}
		I\left(f_{\omega_{\pi_\nu},s}^\circ,v^\circ_{\pi_{\nu}}\right)(I_m)=\int \limits _{Z_n(k_\nu)\backslash T^-}\mu(\underline{t})\left|\det \underline{t} \right|^{ms+\frac{m}{2}}\int\limits_{K_{n,\nu}}\pi_{\nu}(A\underline{t})v^\circ_{\pi_{\nu}}dA d\underline{t}.
	\end{equation}
	The inner integral is an unramified functional on $V_\pi$. Hence, there exists a constant $c(\underline{t})$ such that
	\begin{equation}\label{int_over_k_is_unr}
		\int\limits_{K_{n,\nu}}\pi_{\nu}(A\underline{t})v^\circ_{\pi_{\nu}}  dA=c(\underline{t})v^\circ_{\pi_{\nu}}.
	\end{equation}
	We apply the unique vector $\check{v}^\circ_{\tilde{\pi}_{\nu}}\in V_{\tilde{\pi}}$, such that $\left<v_{\pi_{\nu}}^\circ,\check{v}_{\pi_{\nu}}^{\circ}\right>=1$, on \cref{int_over_k_is_unr} 
	\begin{equation*}
		c(\underline{t})=\int\limits_{K_{n,\nu}}\left<\pi_{\nu}(A\underline{t})v^\circ_{\pi_{\nu}},\check{v}_{\pi_{\nu}}^{\circ}\right>  dA=
		\int\limits_{K_{n,\nu}}\left<\pi_{\nu}(\underline{t})v^\circ_{\pi_{\nu}},\tilde{\pi}_{\nu}(A^{-1})\check{v}_{\pi_{\nu}}^{\circ}\right>dA=\left<\pi_{\nu}(\underline{t})v^\circ_{\pi_{\nu}},\check{v}_{\pi_{\nu}}^{\circ}\right>.
	\end{equation*}	
	Therefore, by \cref{k_mes_is_1} we conclude that $c(\underline{t})= c_{v_{\pi_{\nu}}^\circ,\check{v}_{\pi_{\nu}}^{\circ}}(\underline{t})$ (the matrix coefficient of $\pi_\nu$).

	Thus, for $\Re(s)>>0$
	\begin{equation}\label{local_int_unr_split3}
		I\left(f_{\omega_{\pi_\nu},s}^\circ,v^\circ_{\pi_{\nu}}\right)(I_m)=\int \limits _{Z_n(k_\nu)\backslash T^-}\mu(\underline{t})c_{v_{\pi_{\nu}}^\circ,\check{v}_{\pi_{\nu}}^{\circ}}(\underline{t}){v}^\circ_{{\pi}_{\nu}}\left|\det \underline{t}\right|^{ms+\frac{m}{2}}d\underline{t},
	\end{equation}
	By the fact that $	I\left(f_{\omega_{\pi_\nu},s}^\circ,v^\circ_{\pi_{\nu}}\right)(I_m)$ is a $K_{n,\nu}$-invariant vector, there exists a constant $c(I,s)$ such that
	\begin{equation}\label{local_int_unr_split4}
		I\left(f_{\omega_{\pi_\nu},s}^\circ,v^\circ_{\pi_{\nu}}\right)(I_m)=c(I,s){v}^\circ_{{\pi}_{\nu}}.
	\end{equation}
	Applying $\check{v}^\circ_{\tilde{\pi}_{\nu}}$ on \cref{local_int_unr_split4} gives
	\begin{equation}\label{local_int_unr_split5}
		c(I,s)=\int \limits _{Z_n(k_\nu)\backslash T^-}\mu(\underline{t})c_{v_{\pi_{\nu}}^\circ,\check{v}_{\pi_{\nu}}^{\circ}}(\underline{t})\left|\det \underline{t}\right|^{ms+\frac{m}{2}}d\underline{t}.
	\end{equation}
	
	We now consider the Godement-Jacquet zeta integral \cref{GJ_zeta}. We evaluate it at the normalized matrix coefficient, which correspond to the spherical vectors $v^\circ_{\pi_{\nu}},\check{v}_{\pi_{\nu}}^\circ$, and at $\Phi_0$, the characteristic function of $M_n\left(\ocal_{\nu}\right)$,
	\begin{equation}\label{GJ_at_spheric}
		Z_{\mathrm{GJ}}\left(s,c_{v_{\pi_{\nu}}^\circ,\check{v}_{\pi_{\nu}}^{\circ}},\Phi_0\right)=\int\limits_{\gl[k_\nu]{n}}\Phi_0(g_\nu)c_{v_{\pi_{\nu}}^\circ,\check{v}_{\pi_{\nu}}^{\circ}}(g_\nu)\left|\det g_\nu\right|^{s+\frac{n-1}{2}}dg_\nu.
	\end{equation}
	Notice that the integrand is bi-invariant under $K_{n,\nu}$. Therefore, by writing the Cartan decomposition (modulo the center) of $g_\nu$ as in \cref{cartan_dec}, applying \Cref{kak_integral} on \cref{GJ_at_spheric}, and splitting the integral through the center, we have
	\begin{equation}\label{GJ_at_spheric_split}
		Z_{\mathrm{GJ}}\left(s,c_{v_{\pi_{\nu}}^\circ,\check{v}_{\pi_{\nu}}^{\circ}},\Phi_0\right)=\int\limits_{Z_n(k_\nu)\backslash T^-}\mu(\underline{t})F_{s,\Phi_0}(\underline{t})c_{v_{\pi_{\nu}}^\circ,\check{v}_{\pi_{\nu}}^{\circ}}(\underline{t})\left|\det \underline{t}\right|^{s+\frac{n-1}{2}}d\underline{t},
	\end{equation}
	where
	\begin{equation}\label{GJ_center_int}
		F_{s,\Phi_0}(\underline{t}):=\int\limits_{k_\nu^\times}\Phi_0(a \underline{t})|a|^{n\left(s+\frac{n-1}{2}\right)} \omega_{\pi_{\nu}}(a)d^\times a.
	\end{equation}
	For $a\in k^\times$, 
	\begin{equation*}
	a\underline{t} \in M_n\left(\ocal_{\nu}\right) \iff |a|\leq |1,
	\end{equation*}
	i.e. 
	\begin{equation}\label{GJ_schwartz}
		\Phi_0(a \underline{t})=\begin{cases}
						1,&|a|\leq 1\\
						0,&|a|>1.	
		\end{cases}
	\end{equation}
	By \cref{GJ_schwartz}, we find that \cref{GJ_center_int} equals
	\begin{equation}\label{GJ_over_center}
		F_{s,\Phi_0}(\underline{t})=\int\limits_{|a|\leq 1}|a|^{n\left(s+\frac{n-1}{2}\right)} \omega_{\pi_{\nu}}(a)d^\times a=\sum_{j=0}^{\infty}q_\nu^{-jn\left(s+\frac{n-1}{2}\right)}\omega_{\pi_{\nu}}^j(\pomega).
	\end{equation}
	Hence,
	\begin{equation}\label{GJ_over_center2}
		F_{s,\Phi_0}(\underline{t})=\frac{1}{1-\omega_{\pi_{\nu}}(\pomega)q_\nu^{-n\left(s+\frac{n-1}{2}\right)}}=L\left(n\left(s+\frac{n-1}{2}\right),\omega_{\pi_{\nu}}\right).
	\end{equation}
	We now plug \cref{GJ_over_center2} in \cref{GJ_at_spheric_split}.
	\begin{equation}\label{GJ_at_spheric_last}
		\frac{Z_{\mathrm{GJ}}\left(s,c_{v_{\pi_{\nu}}^\circ,\check{v}_{\pi_{\nu}}^{\circ}},\Phi_0\right)}{L\left(n\left(s+\frac{n-1}{2}\right),\omega_{\pi_{\nu}}\right)}=\int\limits_{Z_n(k_\nu)\backslash T^-}\mu(\underline{t})c_{v_{\pi_{\nu}}^\circ,\check{v}_{\pi_{\nu}}^{\circ}}(\underline{t})\left|\det \left( \underline{t}\right)\right|^{s+\frac{n-1}{2}}d\underline{t},
	\end{equation}
	where we divided the $L$-function of $\omega_{\pi_{\nu}}$ from both sides.
	Now by taking $s\mapsto m(s+\frac{1}{2})-\frac{n-1}{2}$ we get that the integral in \cref{GJ_at_spheric_last} equals to $c(I,m(s+\frac{1}{2}))$ in \cref{local_int_unr_split5}. Now we get \cref{unrm_calc_GJ_id} as requested.
\end{proof}
We note that \cref{local_int_unr_split4} shows in particular that $I\left(f_{\omega_{\pi_\nu},s}^\circ,v^\circ_{\pi_{\nu}}\right)$ lies in the space of $\pi_{\nu}$, which is \Cref{local_is_unr}. Also, \Cref{unrm_calc_L_thrm} follows, and, in particular, $I\left(f_{\omega_{\pi_\nu},s}^\circ,v^\circ_{\pi_{\nu}}\right)$ continues to a meromorphic function in $\bbc$, which is a rational function in $q_\nu^{-s}$.

%
%

\subsection{The \enquote{bad} places}\label{bad_places_sec}
\subsubsection{Common statements}
Let $\nu$ be an Archimedean or a ramified non-Archimedean place. Recall that for a section $f_{\omega_{\pi_\nu}}$ in the induced space (given in \cref{local_ind_space_from_omega}):
\begin{equation*}
\rho_{\omega_{\pi_{\nu}},s}=\mathrm{Ind}_{P_{mn-1,1}(k_\nu)}^{\gl[k_\nu]{mn}}\left(1\otimes \omega_{\pi_{\nu}}^{-1}\right) \delta_{P_{mn-1,1}}^{s},
\end{equation*}
we consider the local integrals (given in \cref{local_int}):
\begin{equation*}
I\left(f_{\omega_{\pi_\nu},s},v_{\pi_{\nu}}\right)(h_\nu)=\int \limits _{Z_n(k_\nu)\backslash\gl[k_\nu]{n}}\pi_{\nu}(g_\nu)v_{\pi_{\nu}}  f_{\omega_{\pi_\nu},s}\left(\tilde{\varepsilon}   t\left(h_\nu,g_\nu\right)\right)dg_\nu.
\end{equation*}
This section is dedicated to prove \Cref{local_int_is_pol_prop}.

First, we make some reductions.
We first note that
\begin{equation*}
	f_{\omega_{\pi_\nu},s}\left(\tilde{\varepsilon}   t\left(h_\nu,g_\nu\right)\right)=[\rho_{\omega_{\pi_{\nu}},s}(t(h_\nu,I_n))f_{\omega_{\pi_\nu},s}]\left(\tilde{\varepsilon}   t\left(I_m,g_\nu\right)\right).
\end{equation*}
Hence, by replacing $f_{\omega_{\pi_\nu},s}$ by the smooth section $\rho_{\omega_{\pi_{\nu}},s}(t(h_\nu,I_n))f_{\omega_{\pi_\nu},s}$, we can assume $h_\nu=I_m$ in \cref{local_int}.
Now, we prove
\begin{seclem}\label{local_int_split_prop}
	 $I\left(f_{\omega_{\pi_\nu},s},v_{\pi_{\nu}}\right)(I_m)$ is a finite sum of integrals of the form
	\begin{equation}\label{local_int_split3}
		\alpha\left(\tau,\phi,\phi'\right)\int \limits 	_{Z_n(k_\nu)\backslash\gl[k_\nu]{n}}P_i(q_\nu^{-s}, q_\nu^s)f_{\omega_{\pi_\nu},s}\left(\tilde{\varepsilon}   t\left(I_m,g_\nu\right)\right)\left(\pi_{\nu}(g_\nu)v_{\pi_{\nu}} ,\phi\right)dg_\nu\cdot \phi^{\prime},
	\end{equation}
	where $f_{\omega_{\pi_\nu},s}$ is a standard section, $\tau$ is an irreducible representation of the compact subgroup $t\left(\mathrm{diag}(I_{m-n},K_{n,\nu}),I_n\right)\leq K_{mn,\nu}$, $\phi$ and $\phi'$ are vectors in (an orthonormal basis of) the finite dimensional isotypic subspace $V_\pi(\tau)$, and $\alpha\left(\tau,\phi,\phi'\right)\in \mathbb{C}$.
\end{seclem}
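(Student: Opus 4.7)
The plan is to reduce the assertion to the case of a standard section using \eqref{hol_sec_as_standard_local}, and then exploit an intertwining identity between the right $K_0$-action on sections and the $\pi_\nu$-action on $V_{\pi_\nu}$ to force $I(f_{\omega_{\pi_\nu},s},v_{\pi_\nu})(I_m)$ into a finite direct sum of $K_{n,\nu}$-isotypic components of $V_{\pi_\nu}$. Here $K_0:=t(\mathrm{diag}(I_{m-n},K_{n,\nu}),I_n)\subseteq K_{mn,\nu}$. Writing $f_{\omega_{\pi_\nu},s}=\sum_{i=1}^{N}P_i(q_\nu^{-s},q_\nu^s)f_{\varphi_{i,\nu},s}$ with each $f_{\varphi_{i,\nu},s}$ a standard section, linearity of the integral in $f$ reduces the claim to a single standard section $f$, with the polynomials $P_i$ reinserted at the end.

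For $y\in K_{n,\nu}$ and $\tilde{y}:=t(\mathrm{diag}(I_{m-n},y),I_n)\in K_0$, I would exploit the factorization
\[
\tilde{y}=t\bigl(\mathrm{diag}(I_{m-n},y),y^{*}\bigr)\cdot t(I_m,y^T),
\]
whose first factor lies in $Q^{\tilde{\varepsilon}}(k_\nu)\cap K_{mn,\nu}$ by \Cref{stab_prop} (taking $A=I_{m-n}$, $B=0$, $\lambda=1$, $d=y^{*}$); arguing as in the proof of \Cref{local_int_cartan}, its conjugate by $\tilde{\varepsilon}$ then lies in $P_{mn-1,1}(k_\nu)\cap K_{mn,\nu}$. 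Since $\tilde{y}$ commutes with $t(I_m,g_\nu)$, this gives $f(\tilde{\varepsilon}t(I_m,g_\nu)\tilde{y})=\omega_{\pi_\nu}^{-1}(\alpha_y)f(\tilde{\varepsilon}t(I_m,y^T g_\nu))$ for a suitable $\alpha_y\in\ocal_\nu^{\times}$, and substituting $g_\nu\mapsto y^{*}g_\nu$ produces the intertwining identity
\[
I(R_{\tilde{y}}f,v_{\pi_\nu})(I_m)=\omega_{\pi_\nu}^{-1}(\alpha_y)\,\pi_\nu(y^{*})\,I(f,v_{\pi_\nu})(I_m),
\]
where $R_{\tilde{y}}f$ denotes right translation of $f$ by $\tilde{y}$. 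Because $f$ is standard (at Archimedean $\nu$ one first restricts to $K$-finite standard sections), the span of the right $K_0$-translates of $f$ is finite-dimensional; the identity above then forces the $\pi_\nu(K_{n,\nu})$-orbit of $I(f,v_{\pi_\nu})(I_m)$ into a finite-dimensional subspace, so $I(f,v_{\pi_\nu})(I_m)\in\bigoplus_{\tau\in T}V_{\pi_\nu}(\tau)$ for some finite set $T$ of $K_{n,\nu}$-types, each isotypic component $V_{\pi_\nu}(\tau)$ being finite-dimensional by admissibility of $\pi_\nu$.

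To conclude, for each $\tau\in T$ fix an orthonormal basis $\{\phi'_{\tau,j}\}_j$ of $V_{\pi_\nu}(\tau)$ with dual basis $\{\phi_{\tau,j}\}_j$, and expand
\[
I(f,v_{\pi_\nu})(I_m)=\sum_{\tau\in T}\sum_{j}\phi'_{\tau,j}\int\limits_{Z_n(k_\nu)\backslash\gl[k_\nu]{n}}f(\tilde{\varepsilon}t(I_m,g_\nu))\,(\pi_\nu(g_\nu)v_{\pi_\nu},\phi_{\tau,j})\,dg_\nu,
\]
which is a finite sum of the required form (with $\alpha(\tau,\phi_{\tau,j},\phi'_{\tau,j})=1$); reinserting the polynomials $P_i$ from the first step yields the full statement. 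The principal technical obstacle is justifying the finite-dimensionality of the right $K_0$-span of a standard section at Archimedean places, where $K_{mn,\nu}$-smoothness does not imply $K$-finiteness; this is handled by passing to the underlying Harish-Chandra module of $K$-finite standard sections and invoking admissibility of $\pi_\nu$ to control the resulting isotypic decomposition.
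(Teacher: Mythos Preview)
Your argument is correct in spirit and close to the paper's, but organized differently. Both proofs reduce to a standard section, both exploit the key fact that an element of $K_0=t(\mathrm{diag}(I_{m-n},K_{n,\nu}),I_n)$ can be factored as something in $Q^{\tilde\varepsilon}\cap K_{mn,\nu}$ times an element $t(I_m,z)$ with $z\in K_{n,\nu}$, and both use $K$-finiteness of the standard section. The paper, however, proceeds by inserting an explicit averaging $\int_{K_{n,\nu}}\,dx$ into the integral, expanding $f(\tilde\varepsilon t(I_m,xg_\nu))$ via $K_{mn,\nu}$-finiteness into a finite sum weighted by matrix coefficients $\langle\xi_{\tau_j},\tilde\tau_j(x^{\ast})\check\xi_{\tilde\tau_j}\rangle$, and then recognizing the resulting $K_{n,\nu}$-integral as a projection $P_{\xi_\tau,\check\xi_{\tilde\tau}}$ onto $V_\pi(\tau)$, which it expands in an orthonormal basis. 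Your route is more abstract: you deduce directly from the intertwining identity that the $\pi_\nu(K_{n,\nu})$-orbit of $I(f,v_{\pi_\nu})(I_m)$ lies in a finite-dimensional space, hence in a finite sum of isotypic components, and then pair with an orthonormal basis. Your approach is cleaner and yields $\alpha=1$; the paper's explicit computation produces the constants $\alpha(\tau,\phi,\phi')$ as $K_{n,\nu}$-integrals of products of matrix coefficients, which is extra information not required by the lemma.

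One small correction: your factorization $\tilde y=t(\mathrm{diag}(I_{m-n},y),y^{*})\cdot t(I_m,y^T)$ uses $y^{*}=(y^T)^{-1}$ (the notation of \S1.3.3), but membership in $Q^{\tilde\varepsilon}$ requires the bottom block of the $\mathrm{GL}_m$-part to equal $d^{\ast}=\tilde w_{n-1}(d^T)^{-1}\tilde w_{n-1}$ (the notation of \S2.2). With $d=(y^T)^{-1}$ this gives $d^{\ast}=\tilde w y\tilde w$, not $y$. The fix is cosmetic: take the second factor to be $t(I_m,\tilde w y^T\tilde w)$ and interpret $y^{*}$ in the first factor as $\tilde w(y^T)^{-1}\tilde w$; then the first factor lies in $Q^{\tilde\varepsilon}\cap K_{mn,\nu}$ as claimed, and after the corresponding substitution $g_\nu\mapsto \tilde w(y^T)^{-1}\tilde w\,g_\nu$ your intertwining identity and the rest of the argument go through unchanged.
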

We note that \Cref{local_int_split_prop} also proves that $I\left(f_{\omega_{\pi_\nu},s},v_{\pi_{\nu}}\right)$ is a section in $$\mathrm{Ind}_{P_{m-n,n}(k_\nu)}^{\gl[k_\nu]{m}}\left(1\otimes \tilde{\pi}_\nu\right) \delta_{P_{m-n,n}}^{s}.$$

\begin{proof}
	
	We normalize the measure such that 
	\begin{equation}\label{k_mes_is_1_bad}
		\mu(K_{n,\nu})=\int\limits_{K_{n,\nu}}dx=1.
	\end{equation}
	We begin by writing $f_{\omega_{\pi_\nu},s}$ in terms of standard sections as in \cref{hol_sec_as_standard_local}:
	\begin{equation*}
	f_{\omega_{\pi_\nu},s}=\sum_{i=1}^{N}P_i(q_\nu^{-s}, q_\nu^s)f_{\varphi^{(i)}_{\omega_{\pi_{\nu}}},s},
	\end{equation*}
	where for all $1\leq i \leq N$, $P_i \in\bbc[q_\nu^{-s}, q_\nu^s]$ and $f_{\varphi^{(i)}_{\omega_{\pi_{\nu}}},s}$ is a standard section in $\rho_{\omega_{\pi_{\nu}},s}$.
	Therefore, it suffices to prove the proposition for $I\left(P(q_\nu^{-s}, q_\nu^s)f_{\varphi_{\omega_{\pi_{\nu}}},s},v_{\pi_{\nu}}\right)(I_m)$, where $P(q_\nu^{-s}, q_\nu^s)$ is a holomorphic function (polynomial for non-Archimedean $\nu$) and $f_{\varphi_{\omega_{\pi_{\nu}}},s}$ is a standard section.
	Let $x\in K_{n,\nu}$ and $g_\nu\in \gl[k_\nu]{n}$.
	We now show that 
	\begin{equation}\label{res_section_is_k_fin}
		f_{\varphi_{\omega_{\pi_{\nu}}},s}\left(\tilde{\varepsilon}   t\left(I_m,xg_\nu\right)\right)=\sum_{j=1}^{N}\left<\xi_{\tau_{j},\tilde{\tau}_{j}}\left(x^*\right)\check{\xi}_{\tilde{\tau}_{j}}\right>f^{(j)}_{\varphi_{\omega_{\pi_{\nu}}},s}\left(\tilde{\varepsilon}   t\left(I_m,g_\nu\right)\right),
	\end{equation}
	where for all $1\leq j\leq N$, $\tau_{j}$ is an irreducible representation of the compact subgroup $t\left(\mathrm{diag}(I_{m-n},K_{n,\nu}),I_n\right)\leq K_{mn,\nu}$, $\xi_{\tau_{j}}\in V_{\tau_{j}}$ and $\check{\xi}_{\tilde{\tau}_{j}}\in V_{\tilde{\tau}_{j}}$.
	Indeed, generally, by the $K_{mn,\nu}$-finiteness of $	f_{\varphi_{\omega_{\pi_{\nu}}},s}$, there exist $N\in\mathbb{N}$ and $\{f^{(j)}_{\varphi_{\omega_{\pi_{\nu}}},s}\}_{j=1}^{N}\subseteq \rho_{\omega_{\pi_{\nu}},s}$ such that for all $A\in \gl[k_\nu]{mn}$ and $X\in K_{mn,\nu}$ we have
	\begin{equation}\label{section_is_K_fin}
		f_{\varphi_{\omega_{\pi_{\nu}}},s}\left(AX\right)=\sum_{j=1}^{N} c_{j}(X)f^{(j)}_{\varphi_{\omega_{\pi_{\nu}}},s}(A),
	\end{equation}
	where
	\begin{equation}\label{ind_space_inner_prod}
		c_{j}(X)=\left(\rho_{\omega_{\pi_{\nu}},s}(X)f_{\varphi_{\omega_{\pi_{\nu}}},s},f^{(j)}_{\varphi_{\omega_{\pi_{\nu}}},s}\right)
	\end{equation}
	and $(,)$ is a $K_{mn,\nu}-$invariant inner product of the space of $\rho_{\omega_{\pi_{\nu}},s}$.
    We note that for all $1\leq j \leq N$, $c_{j}(X)$ is independent of $s$, as $f_{\varphi_{\omega_{\pi_{\nu}}},s}$ is a standard section.
	Let $x\in K_{n,\nu}$. Then,
	\begin{equation*}
			f_{\varphi_{\omega_{\pi_{\nu}}},s}\left(\tilde{\varepsilon}   t\left(I_m,xg_\nu\right)\right)=	f_{\varphi_{\omega_{\pi_{\nu}}},s}\left(\tilde{\varepsilon}   t\left(\mathrm{diag}(I_{m-n},x^*)\mathrm{diag}(I_{m-n},(x^*)^{-1}),xg_\nu\right)\right).
	\end{equation*}
	On the other hand, $t(\mathrm{diag}(I_{m-n},x^*),x)\in Q^{\tilde{\varepsilon}}(k_\nu)$, so its conjugation with $\tilde{\varepsilon}$ lies in $P_{mn-1,1}(k_\nu)\cap K_{mn,\nu}$. Since $	f_{\varphi_{\omega_{\pi_{\nu}}},s}$ is $P_{mn-1,1}(k_\nu)\cap K_{mn,\nu}-$left invariant, 
	\begin{equation*}
			f_{\varphi_{\omega_{\pi_{\nu}}},s}\left(\tilde{\varepsilon}   t\left(I_m,xg_\nu\right)\right)=		f_{\varphi_{\omega_{\pi_{\nu}}},s}\left(\tilde{\varepsilon}   t\left(I_m,g_\nu\right)t\left(\mathrm{diag}(I_{m-n},(x^*)^{-1}),I_n\right)\right).
	\end{equation*}
	By \cref{section_is_K_fin},
	\begin{equation}\label{res_section_is_k_fin1}
			f_{\varphi_{\omega_{\pi_{\nu}}},s} \left(  t\left(I_m,xg_\nu\right)\right)=\sum_{j=1}^{N} c^\prime_{j}\left((x^*)^{-1}\right)f^{(j)}_{\varphi_{\omega_{\pi_{\nu}}},s}\left(\tilde{\varepsilon}   t\left(I_m,g_\nu\right)\right),
	\end{equation}
	where
	\begin{equation*}
		c^\prime _{j}\left((x^*)^{-1}\right)=c_{j}\left(t\left(\mathrm{diag}(I_{m-n},(x^*)^{-1}),I_n\right)\right).
	\end{equation*}
	We can assume that $c^\prime _{j}$ are matrix coefficients of an irreducible representation, say $\tau_{j}$, of the compact subgroup $t\left(\mathrm{diag}(I_{m-n},K_{n,\nu}),I_n\right)\leq K_{mn,\nu}$. i.e. by Riesz representation theorem there exist $\xi_{\tau_{j}}\in V_{\tau_{j}}$ and $\check{\xi}_{\tilde{\tau}_{j}}\in V_{\tilde{\tau}_{j}}$ such that
	\begin{equation}\label{pairing_is_mc}
		c^\prime _{j}\left((x^*)^{-1}\right)=\left<\tau_{j}\left(\tilde{w}x^T\tilde{w}\right)\xi_{\tau_{j}},\check{\xi}_{\tilde{\tau}_{j}}\right>=\left<\xi_{\tau_{j}},\tilde{\tau}_{j}\left(x^*\right)\check{\xi}_{\tilde{\tau}_{j}}\right>.
	\end{equation}
	Plugging \cref{pairing_is_mc} to \cref{res_section_is_k_fin1} gives \cref{res_section_is_k_fin}.
	
	Now, we write $I\left(P(q_\nu^{-s}, q_\nu^s)f_{\varphi_{\omega_{\pi_{\nu}}},s},v_{\pi_{\nu}}\right)(I_m)$ as
	\begin{equation}\label{local_int_bed_split}
		\frac{1}{\mu(K_{n,\nu})}\int \limits _{Z_n(k_\nu)\backslash\gl[k_\nu]{n}}\int\limits_{K_{n,\nu}}P(q_\nu^{-s}, q_\nu^s)f_{\omega_{\pi_\nu},s}\left(\tilde{\varepsilon}   t\left(I_m,xg_\nu\right)\right)\pi_{\nu}(xg_\nu)v_{\pi_{\nu}}  dxdg_\nu.
	\end{equation}
	Applying \cref{res_section_is_k_fin} implies that $I\left(P(q_\nu^{-s}, q_\nu^s)f_{\omega_{\pi_\nu},s},v_{\pi_{\nu}}\right)(I_m)$ is a finite sum of integrals of the form
	\begin{equation}\label{local_int_split2}
		\int \limits _{Z_n(k_\nu)\backslash\gl[k_\nu]{n}}P(q_\nu^{-s}, q_\nu^s)f_{\omega_{\pi_\nu},s}\left(\tilde{\varepsilon}   t\left(I_m,g_\nu\right)\right)\int\limits_{K_{n,\nu}} \left<\xi_\tau,\tilde{\tau}\left(x^*\right)\check{\xi}_{\tilde{\tau}}\right>\pi_{\nu}(xg_\nu)v_{\pi_{\nu}} dxdg_\nu.
	\end{equation}
	
	Let $\tau=\tau_\ell$ for some $1\leq \ell \leq N$. Let $P_{\xi_{\tau},\check{\xi}_{\tilde{\tau}}}:V_\pi \to V_\pi$ be the following map
	\begin{equation}\label{proj_to_isotypic_def}
		P_{\xi_{\tau},\check{\xi}_{\tilde{\tau}}}:=\int\limits_{K_{n,\nu}} \left<\xi_\tau,\tilde{\tau}\left(x^*\right)\check{\xi}_{\tilde{\tau}}\right>\pi_{\nu}(x_\nu) dx,
	\end{equation}
	We note that the inner integral over $K_{n,\nu}$ in \cref{local_int_split2} equals $P_{\xi_{\tau},\check{\xi}_{\tilde{\tau}}}\left(\pi_{\nu}(g_\nu)v_{\pi_{\nu}}\right)$.
	Let  $B:=\{\phi_1,\ldots,\phi_r\}$ be an orthonormal basis of of the finite dimensional isotypic subspace $V_\pi(\tau)$. Then,
	\begin{equation}\label{proj_to_isotypic}
		P_{\xi_{\tau},\check{\xi}_{\tilde{\tau}}}\left(\pi_{\nu}(g_\nu)v_{\pi_{\nu}}\right)=\sum_{1\leq i,j\leq r}\left(\pi_{\nu}(g_\nu)v_{\pi_{\nu}} ,\phi_j\right)\int\limits_{K_{n,\nu}} \left<\xi_\tau,\tilde{\tau}\left(x^*\right)\check{\xi}_{\tilde{\tau}}\right>\left(\pi_{\nu}(x^{-1})\phi_{i},\phi_j\right)dx\cdot \phi_i.
	\end{equation}
	Indeed, the map $P_{\xi_{\tau},\check{\xi}_{\tilde{\tau}}}$ is a projection to $V_\pi(\tau)$. Then,
	\begin{equation}\label{proj_to_isotypic1}
		P_{\xi_{\tau},\check{\xi}_{\tilde{\tau}}}\left(\pi_{\nu}(g_\nu)v_{\pi_{\nu}}\right)=\sum_{i=1}^r e_i(g_\nu)\phi_i,
	\end{equation}
	where 
	\begin{equation}\label{proj_to_isotypic_coef}
		e_i(g_\nu)=\left(P_{\xi_{\tau},\check{\xi}_{\tilde{\tau}}}\left(\pi_{\nu}(g_\nu)v_{\pi_{\nu}}\right),\phi_i\right)=\int\limits_{K_{n,\nu}} \left<\xi_\tau,\tilde{\tau}\left(x^*\right)\check{\xi}_{\tilde{\tau}}\right>\left(\pi_{\nu}(xg_\nu)v_{\pi_{\nu}} ,\phi_i\right)dx.
	\end{equation}
	By the fact that $\pi_{\nu}$ is unitary we have $\left(\pi_{\nu}(xg_\nu)v_{\pi_{\nu}} ,\phi_i\right)=\left(\pi_{\nu}(g_\nu)v_{\pi_{\nu}} ,\pi_{\nu}(x^{-1})\phi_i\right)$.
	The vector $\phi_{i}$ is in the finite dimensional space $V_\pi(\tau)$. Hence,
	\begin{equation}\label{isotypic_shift}
		\pi_{\nu}(x^{-1})\phi_{i}=\sum_{j=1}^{r}\left(\pi_{\nu}(x^{-1})\phi_{i},\phi_j\right)\phi_j.
	\end{equation}
	We now plug \cref{isotypic_shift} to \cref{proj_to_isotypic_coef}. Then, we apply the result to \cref{proj_to_isotypic1} and get \cref{proj_to_isotypic}.
	
	The result now followed immediately by denoting
	\begin{equation*}
		\alpha(\tau,\phi,\phi'):=\int\limits_{K_{n,\nu}} \left<\xi_\tau,\tilde{\tau}\left(x^*\right)\check{\xi}_{\tilde{\tau}}\right>\left(\pi_{\nu}(x^{-1})\phi',\phi\right)dx,
	\end{equation*}
	which absolutely converges as it is an integral of continuous function on a compact subgroup.
	
\end{proof}

We state two corollaries obtained from this proof, 
\begin{cor}\label{k_inv_in_bad_places}
	Let $g_\nu \in \gl[k_\nu]{n}$, $x\in K_n$, and $f_{\omega_{\pi_\nu},s}$ a standard section. Then,  $f_{\omega_{\pi_\nu},s}\left(\tilde{\varepsilon}   t\left(I_m,g_\nu x\right)\right)$ equals to a finite sum of elements of the form $c(x)f_{\omega_{\pi_\nu},s}\left(\tilde{\varepsilon}   t\left(I_m,g_\nu \right)\right)$, where $c(x)$ is independent of $s$.
\end{cor}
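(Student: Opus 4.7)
The plan is to reduce the statement immediately to the $K_{mn,\nu}$-finiteness property of standard sections, which was already exploited in the proof of \Cref{local_int_split_prop}. By the multiplicative property of the Kronecker product recorded in \cref{kprod_prop}, we have
\begin{equation*}
t(I_m,g_\nu x)=t(I_m,g_\nu)\,t(I_m,x),
\end{equation*}
and since $x\in K_{n,\nu}$ the element $X:=t(I_m,x)$ lies in $K_{mn,\nu}$. Thus
\begin{equation*}
f_{\omega_{\pi_\nu},s}\!\left(\tilde{\varepsilon}\,t(I_m,g_\nu x)\right)=f_{\omega_{\pi_\nu},s}\!\left(\bigl[\tilde{\varepsilon}\,t(I_m,g_\nu)\bigr]\cdot X\right).
\end{equation*}

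Next I would invoke the identity \cref{section_is_K_fin} from the proof of \Cref{local_int_split_prop}: for a standard section $f_{\omega_{\pi_\nu},s}$ there exist finitely many standard sections $f^{(j)}_{\omega_{\pi_\nu},s}$ ($1\le j\le N$) such that
\begin{equation*}
f_{\omega_{\pi_\nu},s}(AX)=\sum_{j=1}^{N}c_j(X)\,f^{(j)}_{\omega_{\pi_\nu},s}(A)\qquad (A\in\gl[k_\nu]{mn},\ X\in K_{mn,\nu}),
\end{equation*}
where $c_j(X)=\bigl(\rho_{\omega_{\pi_\nu},s}(X)f_{\omega_{\pi_\nu},s},\,f^{(j)}_{\omega_{\pi_\nu},s}\bigr)$ for a $K_{mn,\nu}$-invariant inner product. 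The key point, noted in that proof, is that because $f_{\omega_{\pi_\nu},s}$ and each $f^{(j)}_{\omega_{\pi_\nu},s}$ are standard sections, their restrictions to $K_{mn,\nu}$ do not depend on $s$, so the coefficients $c_j(X)$ are independent of $s$.

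Specializing to $A=\tilde{\varepsilon}\,t(I_m,g_\nu)$ and $X=t(I_m,x)$, setting $c_j(x):=c_j(t(I_m,x))$, gives
\begin{equation*}
f_{\omega_{\pi_\nu},s}\!\left(\tilde{\varepsilon}\,t(I_m,g_\nu x)\right)=\sum_{j=1}^{N}c_j(x)\,f^{(j)}_{\omega_{\pi_\nu},s}\!\left(\tilde{\varepsilon}\,t(I_m,g_\nu)\right),
\end{equation*}
which is exactly the claimed finite sum, with each $c_j(x)$ independent of $s$. There is no real obstacle here: the corollary is essentially a repackaging of the $K$-finiteness argument already carried out (for left multiplication by $t(I_m,x)$) in the proof of \Cref{local_int_split_prop}, applied instead to right multiplication, which is where $K_{mn,\nu}$-finiteness of the standard section is used directly.
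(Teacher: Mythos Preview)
Your proof is correct and follows exactly the paper's approach: the paper's own proof simply says ``This is \cref{section_is_K_fin}'', and you have spelled out precisely that reduction via the factorization $t(I_m,g_\nu x)=t(I_m,g_\nu)\,t(I_m,x)\in \gl[k_\nu]{mn}\cdot K_{mn,\nu}$. Your observation that the resulting sections are the $f^{(j)}_{\omega_{\pi_\nu},s}$ (rather than literally the same $f_{\omega_{\pi_\nu},s}$) is consistent with how the corollary is actually used later.
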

\begin{proof}
	This is \cref{section_is_K_fin}.
\end{proof}
\begin{cor}\label{section_are_bi_inv}
	Let $g_\nu=A \underline{t} B$, be the Cartan decomposition (modulo the center) of $g_\nu$ as in \cref{cartan_dec}, and $f_{\omega_{\pi_\nu},s}$ a standard section. Then, $f_{\omega_{\pi_\nu},s}\left(\tilde{\varepsilon}   t\left(I_m,g_\nu \right)\right)$ equals to a finite sum of elements of the form $c(A,B)f_{\omega_{\pi_\nu},s}\left(\tilde{\varepsilon}   t\left(I_m, \underline{t}\right)\right)$, where $c(A,B)$ is independent of $s$.
\end{cor}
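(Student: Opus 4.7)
The approach is to decompose $t(I_m,A\underline{t}B)=t(I_m,A)\,t(I_m,\underline{t})\,t(I_m,B)$ via the multiplicativity of the Kronecker product, then absorb $A$ on the left through the $Q^{\tilde{\varepsilon}}$-stabilizer and handle the residual factor together with $B$ on the right via the $K_{mn,\nu}$-finiteness of standard sections.

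Concretely, I would begin by splitting
\[
t(I_m,A)=t(\mathrm{diag}(I_{m-n},A^{\ast}),A)\cdot t(\mathrm{diag}(I_{m-n},(A^{\ast})^{-1}),I_n).
\]
By Proposition \ref{stab_prop} (with $r=n-1$), the first factor $\eta_A:=t(\mathrm{diag}(I_{m-n},A^{\ast}),A)$ lies in $Q^{\tilde{\varepsilon}}(k_\nu)=t_\Delta(P_{m-n,n}(k_\nu),\gl[k_\nu]{n})$, with the parameters $d=A$ and $\lambda=1$. Since $A,A^{\ast}\in K_{n,\nu}$ and $\tilde{\varepsilon}\in K_{mn,\nu}$, the conjugate $p_A:=\tilde{\varepsilon}\eta_A\tilde{\varepsilon}^{-1}$ lies in $P_{mn-1,1}(k_\nu)\cap K_{mn,\nu}$. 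Repeating verbatim the computation in the proof of Proposition \ref{left_p_trans_prop}, the choice $\lambda=1$ forces the $(mn,mn)$-entry $\alpha_{p_A}$ to equal $1$, while $p_A\in K_{mn,\nu}$ gives $\delta_{P_{mn-1,1}}(p_A)=1$. Both contributions to the left-equivariance of $f_{\omega_{\pi_\nu},s}$ under $p_A$ are therefore trivial, yielding $f_{\omega_{\pi_\nu},s}(p_A\cdot g)=f_{\omega_{\pi_\nu},s}(g)$ for any $g$, with no dependence on $s$ introduced.

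The remaining factor $t(\mathrm{diag}(I_{m-n},(A^{\ast})^{-1}),I_n)$ commutes with $t(I_m,\underline{t})$, so it can be pushed past $t(I_m,\underline{t})$ and merged with $t(I_m,B)$. This reduces the problem to
\[
f_{\omega_{\pi_\nu},s}\bigl(\tilde{\varepsilon}\,t(I_m,A\underline{t}B)\bigr)=f_{\omega_{\pi_\nu},s}\bigl(\tilde{\varepsilon}\,t(I_m,\underline{t})\cdot Y_{A,B}\bigr),
\]
where $Y_{A,B}:=t(\mathrm{diag}(I_{m-n},(A^{\ast})^{-1}),B)\in K_{mn,\nu}$.

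The conclusion then follows from the $K_{mn,\nu}$-finiteness of standard sections (the very property underlying \cref{section_is_K_fin}): for any $Y\in K_{mn,\nu}$ one has $f_{\omega_{\pi_\nu},s}(g\cdot Y)=\sum_j c_j(Y)\,f^{(j)}_{\omega_{\pi_\nu},s}(g)$, where the $c_j(Y)$ are matrix coefficients of a finite-dimensional representation of $K_{mn,\nu}$, and in particular are independent of $s$. Applied with $g=\tilde{\varepsilon}\,t(I_m,\underline{t})$ and $Y=Y_{A,B}$ this produces the desired finite sum with coefficients $c(A,B):=c_j(Y_{A,B})$. I do not foresee a serious obstacle; the only delicate point is the verification that $\eta_A\in Q^{\tilde{\varepsilon}}$ corresponds to the parameter $\lambda=1$, which is exactly what guarantees that the left-absorbed factor contributes neither a twist by the central character nor a modulus factor, so that all the $s$-dependence ends up packaged inside $f^{(j)}_{\omega_{\pi_\nu},s}(\tilde{\varepsilon}\,t(I_m,\underline{t}))$.
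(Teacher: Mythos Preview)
Your proof is correct and follows essentially the same approach as the paper. The paper's proof simply cites \cref{res_section_is_k_fin} for the left $A$-factor and \Cref{k_inv_in_bad_places} for the right $B$-factor; your argument unpacks these references, reproducing the same $Q^{\tilde{\varepsilon}}$-stabilizer trick (which is exactly how \cref{res_section_is_k_fin} is derived in the proof of \Cref{local_int_split_prop}) and then applying $K_{mn,\nu}$-finiteness once to the combined right factor $Y_{A,B}$ rather than in two separate steps. Your explicit verification that $\alpha_{p_A}=1$ (so that neither the central character nor the modulus contributes) is a point the paper glosses over when it asserts left invariance under $P_{mn-1,1}(k_\nu)\cap K_{mn,\nu}$.
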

\begin{proof}
	This follows immediately from \cref{res_section_is_k_fin} and \cref{k_inv_in_bad_places}.
\end{proof}

In the next subsections we split to the cases of non-Archimedean and Archimedean places.

\subsubsection{Non-Archimedean places}\label{nonarch_places_sec}
Let $\nu<\infty$.
We now denote $\underline{t}^\Delta:=t\left(I_m, \underline{t} \right)$. Recall that $\tilde{\varepsilon}=\tilde{w}\tilde{u}$ where we write $\tilde{w}:=w_{(m-n+1)n}$ and $\tilde{u}:=u_{(m-n+1)n}(\underline{b_{n-1}})$. We also denote $\tilde{u}(\underline{t}):=u_{(m-n+1)n}(\underline{b_{n-1}}(\underline{t}))$, where 
\begin{equation*}
	\underline{b_{n-1}}(\underline{t})=\left(t_{n-1}\cdot e_{n-1}^T,\ldots,t_1\cdot e_{1}^T\right).
\end{equation*}

\begin{prop}\label{sec_val_on_non_arch_det}
	Let $\underline{t}\in Z_n(k_\nu)\backslash T^-$. Then, for a standard section $f_{\omega_{\pi_\nu},s}$ we have
	\begin{equation*}
		f_{\omega_{\pi_\nu},s}\left(\tilde{\varepsilon}   \underline{t}^\Delta\right)= \sum_{j=1}^N	c_{j}(\tilde{w}\tilde{u}(\underline{t}))\left|\det \underline{t}\right|^{ms+\frac{m}{2}},
	\end{equation*}
	where for $1\leq j\leq N$, $c_{j}$ is the inner product of the space of $\rho_{\omega_{\pi_{\nu}},s}$ given in \cref{ind_space_inner_prod}.
\end{prop}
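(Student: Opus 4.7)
The plan is to follow exactly the strategy of the unramified Proposition \ref{sec_val_on_tor}, replacing the full $K_{mn,\nu}$-invariance used there by the finer $K$-finiteness of a standard section that was already harvested in \eqref{section_is_K_fin} (and recorded in Corollary \ref{k_inv_in_bad_places}).

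First, I would reproduce the algebraic identity from the proof of Proposition \ref{sec_val_on_tor}:
\begin{equation*}
\tilde{\varepsilon}\,\underline{t}^{\Delta}
= \tilde{w}\tilde{u}\,\underline{t}^{\Delta}
= \bigl(\tilde{w}\underline{t}^{\Delta}\tilde{w}^{-1}\bigr)\,\tilde{w}\,\bigl((\underline{t}^{\Delta})^{-1}\tilde{u}\,\underline{t}^{\Delta}\bigr)
= \bigl(\tilde{w}\underline{t}^{\Delta}\tilde{w}^{-1}\bigr)\cdot \tilde{w}\tilde{u}(\underline{t}).
\end{equation*}
Here the left factor is a diagonal element (equal to $\underline{t}^{\Delta}$ up to a permutation of the diagonal entries) and the right factor lies in $K_{mn,\nu}$, because the condition $\underline{t}\in T^-$ forces $|t_i|\le 1$ for every $i$, so the nontrivial entries of $\tilde{u}(\underline{t})=u_{(m-n+1)n}\bigl(\underline{b_{n-1}}(\underline{t})\bigr)$ all sit in $\ocal_{\nu}$.

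Next, I would invoke $K$-finiteness. Applying the decomposition \eqref{section_is_K_fin} with $A = \tilde{w}\underline{t}^{\Delta}\tilde{w}^{-1}$ and $X = \tilde{w}\tilde{u}(\underline{t})\in K_{mn,\nu}$ yields
\begin{equation*}
f_{\omega_{\pi_{\nu}},s}\bigl(\tilde{\varepsilon}\,\underline{t}^{\Delta}\bigr)
= \sum_{j=1}^{N} c_{j}\bigl(\tilde{w}\tilde{u}(\underline{t})\bigr)\, f^{(j)}_{\omega_{\pi_{\nu}},s}\bigl(\tilde{w}\underline{t}^{\Delta}\tilde{w}^{-1}\bigr),
\end{equation*}
with $c_j$ the matrix coefficients from \eqref{ind_space_inner_prod}, which do not depend on $s$ since the $f^{(j)}$ are standard sections.

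Finally, the computation of $f^{(j)}_{\omega_{\pi_{\nu}},s}\bigl(\tilde{w}\underline{t}^{\Delta}\tilde{w}^{-1}\bigr)$ is identical to the one carried out in \eqref{section_on_torus}--\eqref{delta_is_det}: since $\tilde{w}\underline{t}^{\Delta}\tilde{w}^{-1}\in P_{mn-1,1}(k_{\nu})$ is diagonal and its $(mn,mn)$-entry equals $e_{(m-n+1)n}^{T}\underline{t}^{\Delta}e_{(m-n+1)n}=1$, the central character contribution is trivial and the modulus character gives $|\det\underline{t}^{\Delta}|^{s+1/2} = |\det\underline{t}|^{ms+m/2}$. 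Absorbing the (constant in $s$) values $f^{(j)}(I_{mn})$ into the coefficients $c_j$ as already allowed by \eqref{ind_space_inner_prod} produces the stated identity. The only subtle point, and the one worth checking carefully, is that $\tilde{w}\tilde{u}(\underline{t})$ indeed lies in $K_{mn,\nu}$ uniformly in $\underline{t}\in T^-$ so that the right-translation bound \eqref{section_is_K_fin} applies; this is exactly where the negativity condition on the Cartan coordinates is used.
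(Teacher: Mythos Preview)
Your proposal is correct and follows essentially the same route as the paper's proof: the same factorization $\tilde{\varepsilon}\,\underline{t}^{\Delta}=(\tilde{w}\underline{t}^{\Delta}\tilde{w}^{-1})\cdot \tilde{w}\tilde{u}(\underline{t})$, the same observation that $\tilde{w}\tilde{u}(\underline{t})\in K_{mn,\nu}$ because $|t_i|\le 1$, the same appeal to $K$-finiteness (the paper cites Corollary~\ref{k_inv_in_bad_places}, you cite its source \eqref{section_is_K_fin}), and the same reduction to the diagonal computation of Proposition~\ref{sec_val_on_tor}. Your remark about absorbing $f^{(j)}(I_{mn})$ into the coefficients is a slight elaboration the paper leaves implicit, but otherwise the arguments coincide.
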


\begin{proof}

We have
\begin{equation*}
	\tilde{\varepsilon} \underline{t}^\Delta=\left(\tilde{w}\underline{t}^\Delta\tilde{w}^{-1}\right)\tilde{w}\left(\left(\underline{t}^\Delta\right)^{-1}\tilde{u}\underline{t}^\Delta\right)=\left(\tilde{w}\underline{t}^\Delta\tilde{w}^{-1}\right)\tilde{w}\tilde{u}(\underline{t}).
\end{equation*}
All the elements above the diagonal of the unipotent matrix $u_{(m-n+1)n}(\underline{b_{n-1}}(\underline{t}))$ are bounded in absolute value by $1$. Therefore, $\tilde{w}\tilde{u}(\underline{t})\in K_{mn,\nu}$. \Cref{k_inv_in_bad_places} implies that $f_{\omega_{\pi_\nu},s}\left(\tilde{\varepsilon}  \underline{t}^\Delta\right)$ equals to a finite sum of elements of the form $c_{j}(\tilde{w}\tilde{u}(\underline{t}))f_{\omega_{\pi_\nu},s}\left(\tilde{w}\underline{t}^\Delta\tilde{w}^{-1}\right)$.
Now, by the same arguments as in the proof of \Cref{sec_val_on_tor} starting with \cref{conj_torus_by_perm}, we find that $f_{\omega_{\pi_\nu},s} \left(\tilde{w}\underline{t}^\Delta\tilde{w}^{-1}\right)=\prod_{i=1}^{n-1}\left|t_i\right|^{ms+\frac{m}{2}}$ and the proposition follows.
\end{proof}

\begin{cor}\label{local_int_det_prop}
	Assume that $\Re(s)>>0$. In the notation of \Cref{local_int_split_prop,sec_val_on_non_arch_det}, $I\left(f_{\omega_{\pi_\nu},s},v_{\pi_{\nu}}\right)(I_m)$ is a finite sum of integrals of the form
	\begin{equation}\label{local_int_det_eq}
		\alpha\left(\tau,\phi,\phi'\right)\beta\int \limits 	_{Z_n(k_\nu)\backslash T^-}P(q_\nu^{-s}, q_\nu^s)\mu(\underline{t})c(X(\tilde{\varepsilon},\underline{t}))\left|\det \underline{t}\right|^{ms+\frac{m}{2}}\left(\pi_{\nu}(\underline{t})v_{\pi_{\nu}} ,\phi\right)d\underline{t}\cdot  \phi^{\prime},
	\end{equation}
	 where $P(q_\nu^{-s}, q_\nu^s)$ is a holomorphic function (polynomial for non-Archimedean $\nu$), $\mu(\underline{t})=\mu(K_{n,\nu}\underline{t}K_{n,\nu})$, and $c$ is the inner product of the space of $\rho_{\omega_{\pi_{\nu}},s}$ given in \cref{ind_space_inner_prod}.
\end{cor}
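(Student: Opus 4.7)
The approach is to substitute the Cartan decomposition $g_\nu = A\underline{t}B$ into each integral produced by \Cref{local_int_split_prop} and collapse the resulting $K_{n,\nu}^2$-integration into scalar constants. First I would apply \Cref{kak_integral} to \cref{local_int_split3}: the measure $dg_\nu$ becomes $\mu(\underline{t})\,dA\,dB\,d\underline{t}$, so the integral splits as an outer integration over $\underline{t}\in Z_n(k_\nu)\backslash T^-$ with weight $\mu(\underline{t})$, wrapping an inner integration over $(A,B)\in K_{n,\nu}^2$ of the product $f_{\omega_{\pi_\nu},s}(\tilde{\varepsilon}\,t(I_m,A\underline{t}B))\cdot (\pi_\nu(A\underline{t}B)v_{\pi_\nu},\phi)$.

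Next, I would invoke \Cref{section_are_bi_inv} to replace the section evaluation by a finite sum of terms of the shape $c(A,B)\,f^{(j)}_{\omega_{\pi_\nu},s}(\tilde{\varepsilon}\,t(I_m,\underline{t}))$, in which each $c(A,B)$ is independent of $s$. Applying \Cref{sec_val_on_non_arch_det} to every such $f^{(j)}_{\omega_{\pi_\nu},s}(\tilde{\varepsilon}\,t(I_m,\underline{t}))$ expands it as a finite sum $\sum_k c_k(\tilde{w}\tilde{u}(\underline{t}))\,|\det\underline{t}|^{ms+\frac{m}{2}}$, again with $s$-independent coefficients. In particular $|\det\underline{t}|^{ms+\frac{m}{2}}$ can be pulled outside the $K_{n,\nu}^2$-integral, and the factors $c_k(\tilde{w}\tilde{u}(\underline{t}))$ play exactly the role of $c(X(\tilde{\varepsilon},\underline{t}))$ in the target formula.

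It remains to treat the inner $K_{n,\nu}^2$-integral, which now depends only on $\underline{t}$ and is $s$-independent. Using the unitarity of $\pi_\nu$ to write $(\pi_\nu(A\underline{t}B)v_{\pi_\nu},\phi)=(\pi_\nu(\underline{t})\pi_\nu(B)v_{\pi_\nu},\pi_\nu(A^{-1})\phi)$ and then expanding $\pi_\nu(A^{-1})\phi$ and $\pi_\nu(B)v_{\pi_\nu}$ with respect to an orthonormal basis of the finite-dimensional isotypic subspace $V_{\pi_\nu}(\tau)$, in the spirit of \cref{proj_to_isotypic,proj_to_isotypic_coef} from the proof of \Cref{local_int_split_prop}, the $dA\,dB$ integration collapses into a finite $\mathbb{C}$-linear combination of matrix coefficients of the form $(\pi_\nu(\underline{t})v_{\pi_\nu},\phi)$, multiplied by scalars depending only on $\tau$, $\phi$, $\phi'$, and the $c(A,B)$. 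Collecting these scalars into a single constant $\beta$ and refining the finite sum produces expressions of the desired shape \eqref{local_int_det_eq}.

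The main obstacle is this last step: ensuring that after the isotypic projections are applied, the inner $K_{n,\nu}^2$-integral contributes a genuine scalar $\beta$ (rather than an uncontrolled vector-valued function of $\underline{t}$), and that the surviving matrix coefficient really has the form $(\pi_\nu(\underline{t})v_{\pi_\nu},\phi)$ with the \emph{same} $v_{\pi_\nu}$ as in the input. Once this bookkeeping is handled, everything else is a direct combination of the preceding lemmas and a change of variables.
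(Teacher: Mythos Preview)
Your approach matches the paper's: start from \Cref{local_int_split_prop}, invoke \Cref{section_are_bi_inv} and \Cref{sec_val_on_non_arch_det} to isolate $|\det\underline{t}|^{ms+m/2}$ and the $s$-independent coefficients $c(X(\tilde{\varepsilon},\underline{t}))$, and use \Cref{kak_integral} together with the $K_{n,\nu}$-finiteness of $v_{\pi_\nu}$ and $\phi$ to collapse the $K_{n,\nu}^2$-integration into the scalar $\beta=\int_{K_{n,\nu}^2}c(A,B)\,dA\,dB$. Regarding your obstacle, the paper dispatches it in a single line (``since $v_{\pi_\nu}$ and $\phi$ are $K_{n,\nu}$-finite, we may replace $(\pi_\nu(g_\nu)v_{\pi_\nu},\phi)$ by $(\pi_\nu(\underline{t})v_{\pi_\nu},\phi)$''); the symbol $v_{\pi_\nu}$ in \eqref{local_int_det_eq} should be read as a $K$-finite vector lying in the $K_{n,\nu}$-span of the original $v_{\pi_\nu}$, which is all that the subsequent convergence and meromorphic continuation arguments require.
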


\begin{proof}
	By \Cref{local_int_split_prop}, $I\left(f_{\omega_{\pi_\nu},s},v_{\pi_{\nu}}\right)(I_m)$ is a finite sum of integrals of the form
	\begin{equation}\label{local_int_split_prop2}
		\alpha\left(\tau,\phi,\phi'\right)\int \limits 	_{Z_n(k_\nu)\backslash\gl[k_\nu]{n}}P(q_\nu^{-s}, q_\nu^s)f_{\omega_{\pi_\nu},s}\left(\tilde{\varepsilon}   t\left(I_m,g_\nu\right)\right)\left(\pi_{\nu}(g_\nu)v_{\pi_{\nu}} ,\phi\right)dg_\nu \cdot \phi^{\prime}.
	\end{equation}
	By \Cref{section_are_bi_inv}, $f_{\omega_{\pi_\nu},s}\left(\tilde{\varepsilon}   t\left(I_m,g_\nu\right)\right)$ equals to a finite sum of $c(A,B)f_{\omega_{\pi_\nu},s}\left(\tilde{\varepsilon}   t\left(I_m, \underline{t}\right)\right)$, where $c(A,B)$ is independent of $s$.
 	Since $v_{\pi_\nu}$ and $\phi$ are $K_{n,\nu}$-finite, we may replace $\left(\pi_{\nu}(g_\nu)v_{\pi_{\nu}} ,\phi\right)$ by $\left(\pi_{\nu}(\underline{t})v_{\pi_{\nu}} ,\phi\right)$.
 	Hence, by \Cref{kak_integral} we get that $I\left(f_{\omega_{\pi_\nu},s},v_{\pi_{\nu}}\right)(I_m)$ is a finite sum of terms of the form
 	\begin{equation}\label{local_int_split_prop3}
 		\alpha\left(\tau,\phi,\phi'\right)\int \limits 	_{K_{n,\nu}^2}c(A,B)dAdB\int \limits 	_{Z_n(k_\nu)\backslash T^-}P(q_\nu^{-s}, q_\nu^s)\mu(\underline{t})f_{\omega_{\pi_\nu},s}\left(\tilde{\varepsilon}   t\left(I_m,\underline{t}\right)\right)\left(\pi_{\nu}(\underline{t})v_{\pi_{\nu}} ,\phi\right)d\underline{t} \cdot \phi^{\prime},
 	\end{equation}
 	where $\mu(\underline{t})=\mu(K_{n,\nu}\underline{t}K_{n,\nu})$.
 	The integral $\beta:=\int \limits 	_{K_{n,\nu}^2}c(A,B)dAdB$ absolutely converges.
 	Finally, \Cref{sec_val_on_non_arch_det} implies that \cref{local_int_split_prop3} equals to a finite sum of terms of the form
	\begin{equation*}
		\alpha\left(\tau,\phi,\phi'\right)\beta\int \limits _{Z_n(k_\nu)\backslash T^-}P(q_\nu^{-s}, q_\nu^s)\mu(\underline{t})c(X(\tilde{\varepsilon},\underline{t}))\left|\det \underline{t}\right|^{ms+\frac{m}{2}}\left(\pi_{\nu}(\underline{t})v_{\pi_{\nu}} ,\phi\right)d\underline{t} \cdot \phi^{\prime},
	\end{equation*}
	where $c$ is the inner product of the space of $\rho_{\omega_{\pi_{\nu}},s}$ given in \cref{ind_space_inner_prod}.
\end{proof}

Next, we make use of the asymptotic behavior of matrix coefficients due to \cite{casselman1995introduction} (quoted from \cite{hazan2022asym} for convenience).
For each $\Theta\subseteq \Delta$ and $0<\varepsilon\leq 1$, we define
\begin{equation*}
T_{\Theta}^{-}\left(\varepsilon\right)=\left\{ a\in T\left| \begin{array}{l}
\left|\alpha\left(a\right)\right| \leq\varepsilon\ \forall\alpha\in\Delta\backslash \Theta \\ \varepsilon < \left|\alpha\left(a\right)\right|\leq 1\ \forall\alpha\in \Theta
\end{array} \right.\right\}.
\end{equation*}
This is a subset of $T^-$. For  each $\Theta\subseteq \Delta$ we denote by $P_\Theta=M_\Theta N_\Theta$  the standard parabolic subgroup corresponding to $\Theta$.
\begin{Theorem*}[Theorem 1 in \cite{hazan2022asym}]
	Let $v\in V$ and $\tilde{v}\in\tilde{V}$. There exist $\varepsilon>0$ and finite sets of vectors, that depend on $\{\pi_\nu, v, \tilde{v}\}$, $\underline{p'}=\left(p'_{1},\ldots,p'_{r}\right)\in \bbr^r,\ \underline{p}=\left(p_{1},\ldots,p_{r}\right)\in \bbz^r_{\ge 0}$, and $\underline{\chi}=\left(\chi_{1},\ldots,\chi_{r}\right)$ where for all $1\leq i\leq r$, $\chi_i:k^\times \to \bbc^\times$ are unitary characters, such that for all $a\in T^-$, one has
	\begin{equation}\label{asym_exp_p_adic}
		\left\langle \pi_\nu(a)v,\tilde{v}\right\rangle = \sum_{\Theta\subseteq \Delta,\underline{p'},\underline{p},\underline{\chi}}\chi_{A_\Theta^-\left(\varepsilon\right)}(a)\alpha_{\underline{p'},\underline{p},\underline{\chi}}\prod_{i=1}^{r_\Theta}\chi_i(a_i)\left|a_{i}\right|^{p'_{i}}\log_q^{p_{i}}\left|a_{i}\right|,
	\end{equation}
	where  $\chi_{T_\Theta^-\left(\varepsilon\right)}(a)$ is the indicator function of $T_\Theta^-\left(\varepsilon\right)$, $r_\Theta$ is such that $T_{M_\Theta}\cong (k^\times)^{r_\Theta}$ by the map $a\mapsto\left(a_1,\ldots, a_{r_\Theta}\right)$, and $\alpha_{\underline{p'},\underline{p},\underline{\chi}}\in \bbc$ are such that $\alpha_{\underline{p'},\underline{p},\underline{\chi}}=0$ for all but finitely many  $\underline{p'},\underline{p},\underline{\chi}$.
\end{Theorem*}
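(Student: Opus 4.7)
The plan is to derive the asymptotic expansion via Casselman's theory of the canonical pairing between a representation and its Jacquet module, tracking the contribution of each standard parabolic $P_\Theta = M_\Theta N_\Theta$ on the corresponding region $T_\Theta^-(\varepsilon)\subseteq T^-$. The expansion \cref{asym_exp_p_adic} will be assembled from local expansions indexed by the $\Theta$'s, each one controlled by a finite-dimensional piece of linear algebra inside a Jacquet module.

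First, I would recall the standard machinery: for each $\Theta\subseteq \Delta$, form the Jacquet module $V_{N_\Theta}:=V/V(N_\Theta)$, which is an admissible $M_\Theta$-representation and is finitely generated. Since $\pi_\nu$ is admissible, $V_{N_\Theta}$ has finite length, so the action of the center $T_{M_\Theta}\subseteq M_\Theta$ factors through a finite-dimensional commutative quotient and hence decomposes $V_{N_\Theta}$ into a finite direct sum of generalized $T_{M_\Theta}$-eigenspaces. Each generalized eigencharacter has the form $\chi\cdot|\cdot|^{p'}$ for some unitary character $\chi$ of $k_\nu^\times$ and some $p'\in\bbr$; the nilpotent part of the Jordan decomposition on such a generalized eigenspace produces, upon iterated application of $\pi_\nu(a)$ with $a\in T_{M_\Theta}$, the factors $\log_{q}^{p_i}|a_i|$ with $p_i$ bounded by the size of the Jordan block. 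The same analysis applied symmetrically to $\tilde V_{\bar N_\Theta}$ gives a canonical pairing
\begin{equation*}
\langle\,,\,\rangle_{\Theta}\colon V_{N_\Theta}\times \tilde V_{\bar N_\Theta}\to \bbc.
\end{equation*}

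Next I would invoke Casselman's asymptotic identity: there exists $\varepsilon>0$, independent of $v,\tilde v$, such that for every $\Theta\subseteq\Delta$ and every $a\in T_\Theta^-(\varepsilon)$,
\begin{equation*}
\bigl\langle\pi_\nu(a)v,\tilde v\bigr\rangle \;=\; \delta_{P_\Theta}^{1/2}(a)\,\bigl\langle a\cdot j_{N_\Theta}(v),\,j_{\bar N_\Theta}(\tilde v)\bigr\rangle_{\Theta},
\end{equation*}
where $j_{N_\Theta}$, $j_{\bar N_\Theta}$ are the canonical projections onto the Jacquet modules. This is the heart of the argument: it converts the genuine matrix coefficient on the cone $T_\Theta^-(\varepsilon)$ into an evaluation inside the finite-length $T_{M_\Theta}$-module generated by $j_{N_\Theta}(v)$. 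Substituting the Jordan/eigencharacter decomposition of the previous paragraph and absorbing $\delta_{P_\Theta}^{1/2}(a)$ into the character/exponent data (using the isomorphism $T_{M_\Theta}\cong(k_\nu^\times)^{r_\Theta}$, $a\mapsto(a_1,\ldots,a_{r_\Theta})$) yields, on $T_\Theta^-(\varepsilon)$, a finite expression of the shape
\begin{equation*}
\sum_{\underline{p'},\underline{p},\underline{\chi}} \alpha^{(\Theta)}_{\underline{p'},\underline{p},\underline{\chi}}\,\prod_{i=1}^{r_\Theta}\chi_i(a_i)\,|a_i|^{p'_i}\,\log_q^{p_i}|a_i|,
\end{equation*}
with only finitely many nonzero coefficients $\alpha^{(\Theta)}_{\underline{p'},\underline{p},\underline{\chi}}$, determined by the finitely many generalized eigencharacters occurring in $V_{N_\Theta}$.

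To conclude, I would observe that $T^-$ is the disjoint union $T^-=\bigsqcup_{\Theta\subseteq\Delta}T_\Theta^-(\varepsilon)$; multiplying each local expansion above by the indicator function $\chi_{T_\Theta^-(\varepsilon)}$ and summing over $\Theta$ gives exactly \cref{asym_exp_p_adic}, with $\alpha_{\underline{p'},\underline{p},\underline{\chi}}$ packaged from the $\alpha^{(\Theta)}_{\underline{p'},\underline{p},\underline{\chi}}$'s. The main obstacle is Casselman's asymptotic identity itself: one must show that $j_{N_\Theta}$ becomes, in a precise sense, an isometry between the matrix-coefficient pairing on $V\times\tilde V$ and Casselman's canonical pairing on $V_{N_\Theta}\times\tilde V_{\bar N_\Theta}$ after a uniform shift into $T_\Theta^-(\varepsilon)$, and that a single $\varepsilon>0$ works for all $(v,\tilde v)$; this relies on a compactness/filtration argument on $V$ together with smoothness to control the $N_\Theta$-translates of $v$. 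Once that identity is in hand, the finiteness of the set of triples $(\underline{p'},\underline{p},\underline{\chi})$ and the shape of the expansion are automatic consequences of the finite length of the Jacquet modules.
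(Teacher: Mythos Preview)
The paper does not prove this statement at all: it is quoted verbatim as ``Theorem 1 in \cite{hazan2022asym}'' and attributed to Casselman's work \cite{casselman1995introduction}, and is used as a black box in the subsequent convergence and meromorphic-continuation arguments. There is therefore no proof in the paper to compare your proposal against.

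That said, your sketch is essentially the standard Casselman argument underlying the cited references, and the outline is correct: pass to the Jacquet module $V_{N_\Theta}$, use finite length to get a generalized eigenspace decomposition for $T_{M_\Theta}$, and invoke the canonical pairing to identify the matrix coefficient with a Jacquet-module pairing deep in the cone. One small inaccuracy: you write that ``a single $\varepsilon>0$ works for all $(v,\tilde v)$,'' but in Casselman's theory the threshold $\varepsilon$ depends on the compact open subgroup fixing $v$ and $\tilde v$, and indeed the theorem as stated explicitly allows $\varepsilon$ to depend on $\{\pi_\nu,v,\tilde v\}$. This does not affect the structure of your argument, but the uniformity claim should be dropped.
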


Let $g=(g_{i,j})_{1\leq i,j\leq n}\in\gl[k_\nu]{n}$. Define $||g||=\max_{1\leq i,j\leq n}\{|g_{i,j}|,|g^{-1}_{i,j}|\}$. Then, \cref{asym_exp_p_adic} gives in particular,
\begin{cor}
	There exist constants $\lambda,m_a\in\mathbb{R}_+$ such that
	\begin{equation*}
		\left|\left\langle \pi(a)v,\tilde{v}\right\rangle \right| \leq \lambda ||a|| ^{m_a}.
	\end{equation*}
\end{cor}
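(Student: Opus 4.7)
The strategy is to view this corollary as an immediate consequence of the asymptotic expansion theorem quoted just above. By that theorem, $\langle \pi_\nu(a)v,\tilde v\rangle$ is a \emph{finite} sum --- only finitely many of the coefficients $\alpha_{\underline{p'},\underline{p},\underline{\chi}}$ are nonzero --- so it suffices to bound a single typical summand by $C\,||a||^M$ with constants independent of $a\in T^-$, and then take the maximum exponent over the finitely many summands.

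For a single summand I would first discard the factors that are trivially bounded: the $\chi_i$ are unitary, so $|\chi_i(a_i)|=1$, and the indicator function $\chi_{T_\Theta^-(\varepsilon)}(a)$ is at most $1$. The problem therefore reduces to bounding $\prod_{i=1}^{r_\Theta}|a_i|^{p'_i}\bigl|\log_q|a_i|\bigr|^{p_i}$ in terms of $||a||$. On $T^-$ we have $|a_i|\leq 1$, and the definition $||a||=\max_{i,j}\{|a_{i,j}|,|a_{i,j}^{-1}|\}$ forces, for a diagonal $a$, that $|a_i|\geq ||a||^{-1}$; consequently $|a_i|^{p'_i}\leq ||a||^{|p'_i|}$ regardless of the sign of $p'_i$. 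For the logarithmic factor, $0\leq -\log_q|a_i|\leq \log_q||a||$, so $\bigl|\log_q|a_i|\bigr|^{p_i}\leq (\log_q||a||)^{p_i}$, which is in turn dominated by $C_\epsilon ||a||^{\epsilon}$ for any fixed $\epsilon>0$ once $||a||\geq q$ (and by a constant on any bounded range of $||a||$).

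Multiplying these bounds, each summand is dominated by $C\,||a||^{\sum_i(|p'_i|+\epsilon)}$. Summing over the finitely many nonzero terms, letting $m_a$ be the largest of the resulting exponents, and letting $\lambda$ absorb $\sum|\alpha_{\underline{p'},\underline{p},\underline{\chi}}|$ together with the implied constants from the logarithmic estimate, yields the claimed inequality. I do not expect a genuine obstacle here; the corollary is a purely quantitative repackaging of \cref{asym_exp_p_adic}, expressing the qualitative fact that matrix coefficients of $\pi_\nu$ grow at most polynomially in $||a||$ on $T^-$.
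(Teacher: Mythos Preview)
Your proposal is correct and is exactly the argument the paper has in mind: the paper states the corollary with no proof at all, introducing it only with the phrase ``\cref{asym_exp_p_adic} gives in particular,'' and your write-up simply supplies the routine details behind that ``in particular.'' One small caveat on notation: in the quoted theorem the coordinates $a_i$ are those of the center $T_{M_\Theta}$ rather than the raw diagonal entries of $a$, so your sentence ``on $T^-$ we have $|a_i|\le 1$'' is not literally justified; but this is harmless, since each such $a_i$ is a monomial in the diagonal entries and hence still satisfies $||a||^{-c}\le |a_i|\le ||a||^{c}$ for some fixed $c$, which is all your estimate actually uses.
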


\begin{prop}
	The integral $I\left(f_{\omega_{\pi_\nu},s},v_{\pi_{\nu}}\right)(I_m)$ absolutely converges in $\Re(s)>>0$.
\end{prop}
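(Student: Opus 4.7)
The plan is to estimate $\bigl|I\left(f_{\omega_{\pi_\nu},s},v_{\pi_{\nu}}\right)(I_m)\bigr|$ directly, by the same Cartan-decomposition techniques that underlie the proofs of \Cref{local_int_split_prop} and \Cref{local_int_det_prop}. Since any holomorphic section is a finite sum $f_{\omega_{\pi_\nu},s}=\sum_i P_i(q_\nu^{-s},q_\nu^s) f_{\varphi_i,s}$ of standard sections (cf.\ \cref{hol_sec_as_standard_local}), and the factors $P_i$ are bounded for fixed $s$, it suffices to treat each summand corresponding to a standard section separately. The vector-valued integral may be tested by pairing with an arbitrary $\phi\in V_{\tilde\pi_\nu}$.

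First I would apply the Cartan decomposition $g_\nu=A\underline{t}B$ with $A,B\in K_{n,\nu}$ and $\underline{t}\in Z_n(k_\nu)\backslash T^-$ parametrized by $\underline{t}=\mathrm{diag}(\pomega^{r_1},\ldots,\pomega^{r_{n-1}},1)$ with $r_1\geq\cdots\geq r_{n-1}\geq 0$, invoking \Cref{kak_integral}. By \Cref{section_are_bi_inv} combined with \Cref{sec_val_on_non_arch_det},
\begin{equation*}
\bigl|f_{\omega_{\pi_\nu},s}\bigl(\tilde\varepsilon\,t(I_m,A\underline{t}B)\bigr)\bigr|\le C_1\,|\det\underline{t}|^{m\Re(s)+m/2}=C_1\,q_\nu^{-(m\Re(s)+m/2)\sum_{i}r_i},
\end{equation*}
uniformly in $A,B$, since the $K_{mn,\nu}$-matrix coefficients produced there are bounded independently of $s$. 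By \Cref{jacobian_of_kak_int_non_arch} the Cartan measure satisfies $\mu(\underline{t})\le C_2\,q_\nu^{\sum_i(n-2i+1)r_i}$. Finally, the matrix coefficient $\bigl|(\pi_\nu(A\underline{t}B)v_{\pi_\nu},\phi)\bigr|$ is dominated by $\lambda\,\|\underline{t}\|^{m_a}=\lambda\,q_\nu^{r_1 m_a}$ uniformly in $A,B$, by the corollary of the Casselman asymptotic theorem recalled above, applied to the finite set of $K_{n,\nu}$-translates of $v_{\pi_\nu}$ and $\phi$.

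Combining these bounds, the integrand (after pairing with $\phi$) is bounded by a constant times $q_\nu^{E(\underline{r},s)}$ where
\begin{equation*}
E(\underline{r},s)=\sum_{i=1}^{n-1}(n-2i+1)r_i-(m\Re(s)+m/2)\sum_{i=1}^{n-1}r_i+m_a r_1.
\end{equation*}
Substituting $s_j=r_j-r_{j+1}$ for $1\le j\le n-2$ and $s_{n-1}=r_{n-1}$, so that $s_j\ge 0$ and $r_i=\sum_{j\ge i}s_j$, and using the identity $\sum_{i=1}^{j}(n-2i+1)=j(n-j)$, one obtains
\begin{equation*}
E(\underline{r},s)=\sum_{j=1}^{n-1}\Bigl[j\bigl(n-j-m\Re(s)-m/2\bigr)+m_a\Bigr]s_j.
\end{equation*}

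The main (and only subtle) point, where the hypothesis $\Re(s)\gg 0$ is used, is the following: once $\Re(s)>(n-1+m_a)/m-1/2$, every coefficient in the brackets is strictly negative, so the series over $(s_1,\ldots,s_{n-1})\in\bbz_{\ge 0}^{n-1}$ decouples as a product of convergent geometric series. This yields the absolute convergence of $I\left(f_{\omega_{\pi_\nu},s},v_{\pi_\nu}\right)(I_m)$ in the corresponding half-plane, as required. The same estimate additionally provides the uniform bound needed to justify a posteriori the formal manipulations invoked in \Cref{local_int_split_prop} and \Cref{local_int_det_prop}.
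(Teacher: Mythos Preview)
Your proof is correct and follows essentially the same approach as the paper: Cartan decomposition, bound on the section via \Cref{sec_val_on_non_arch_det}, bound on $\mu(\underline t)$ via \Cref{jacobian_of_kak_int_non_arch}, and the Casselman bound on the matrix coefficient, leading to the same convergence threshold $\Re(s)>(n-1+m_a)/m-1/2$. Your change of variables $s_j=r_j-r_{j+1}$ is a slightly cleaner way to decouple the cone $r_1\ge\cdots\ge r_{n-1}\ge 0$ into a product of independent geometric series than the paper's ``separate variables'' step (which, as written, really only gives an upper bound by enlarging the cone to the cube); and you correctly omit the partition $T^-=\bigcup_\Theta T_\Theta^-(\varepsilon)$, which the paper invokes but which plays no role for absolute convergence.
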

\begin{proof}
	For any $0<\varepsilon \leq 1$, $T^-$ is the disjoint union of $T_{\Theta}^{-}\left(\varepsilon\right)$ as $\Theta$ ranges over all subsets of $\Delta$. Thus, together with \Cref{local_int_det_prop}, $I\left(f_{\omega_{\pi_\nu},s},v_{\pi_{\nu}}\right)(I_m)$ is a finite sum of integrals of the form
	\begin{equation*}
	\alpha\left(\tau,\phi,\phi'\right)\beta\int \limits 	_{Z_n(k_\nu)\backslash T_\Theta^-(\varepsilon)}P(q_\nu^{-s}, q_\nu^s)\mu(\underline{t})c(X(\tilde{\varepsilon},\underline{t}))\left|\det \underline{t}\right|^{ms+\frac{m}{2}}\left(\pi_{\nu}(\underline{t})v_{\pi_{\nu}} ,\phi\right)d\underline{t}\cdot  \phi^{\prime}.
	\end{equation*}
	where, $P(q_\nu^{-s}, q_\nu^s)$ is polynomial,  $c(X(\tilde{\varepsilon},\underline{t}))$ does not depend on $s$, $\mu(\underline{t})= \frac{\mu_{M_a}}{\mu_G} \delta^{-1}_B(a)$ (by \Cref{jacobian_of_kak_int_non_arch}).
	We have $||\underline{t}||=|t_1|^{-1}$. Therefore, each of the integrals above are bounded in absolute value by
	\begin{equation*}
	\begin{split}
		\int \limits 	_{|t_1|\leq |t_2|\leq \ldots |t_{n-1}|\leq 1}&|t_1|^{1-n-m_a+ms+m/2}|t_2|^{3-n+ms+m/2}\\&\cdots |t_{n-1}|^{n-3+ms+m/2}d^\times t_1 \cdots d^\times t_{n-1} .
	\end{split}
	\end{equation*}
	By separate variables we find that the last integral equals
	\begin{equation*}
		\int \limits 	_{|t_1|\leq 1}|t_1|^{1-n-m_a+ms+m/2}d^\times t_1\prod_{j=2}^{n-1}	\int \limits 	_{|t_j|\leq 1}|t_j|^{j+1-n+ms+m/2}d^\times t_j.
	\end{equation*}
	Hence, it is absolutely convergent iff $\Re(1-n-m_a+ms+m/2)\geq 0$.
\end{proof}

Moreover, the integral $I\left(f_{\omega_{\pi_\nu},s},v_{\pi_{\nu}}\right)(I_m)$  has meromorphic continuation. We show it by first applying \cref{asym_exp_p_adic} to \Cref{local_int_det_prop}.

\begin{cor}\label{local_int_det_prop2}
	Assume that $\Re(s)>>0$. In the notation of \Cref{local_int_split_prop,sec_val_on_non_arch_det}, $I\left(f_{\omega_{\pi_\nu},s},v_{\pi_{\nu}}\right)(I_m)$ is a finite sum of integrals of the form
	\begin{equation}\label{local_int_det_eq2}
	\begin{split}
		\alpha\left(\tau,\phi,\phi'\right)\beta\int \limits 	_{Z_n(k_\nu)\backslash T_\Theta^-(\varepsilon)}&P(q_\nu^{-s}, q_\nu^s)\mu(\underline{t})c(X(\tilde{\varepsilon},\underline{t}))\left|\det \underline{t}\right|^{ms+\frac{m}{2}}\\&\cdot\alpha_{\underline{p},\underline{p'},\underline{\chi}}\prod_{i=1}^{r_\Theta}\chi_i(t_i)\left|t_{i}\right|^{p_{i}}\log^{p'_{i}}\left|t_{i}\right|d\underline{t}\cdot  \phi^{\prime},
	\end{split}
	\end{equation}
	where $P(q_\nu^{-s}, q_\nu^s)$ is a holomorphic function (polynomial for non-Archimedean $\nu$), $\mu(\underline{t})=\mu(K_{n,\nu}\underline{t}K_{n,\nu})$, and $c$ is the inner product of the space of $\rho_{\omega_{\pi_{\nu}},s}$ given in \cref{ind_space_inner_prod}.
\end{cor}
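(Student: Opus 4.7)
The plan is to combine the conclusion of \Cref{local_int_det_prop} with the Casselman-type asymptotic expansion of matrix coefficients quoted as Theorem~1 of \cite{hazan2022asym}, namely formula \eqref{asym_exp_p_adic}. The structure is transparent: the integrand in \eqref{local_int_det_eq} already contains every factor appearing on the right-hand side of \eqref{local_int_det_eq2} \emph{except} the matrix coefficient $(\pi_\nu(\underline{t})v_{\pi_\nu},\phi)$, which still has to be put into explicit asymptotic form.

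Concretely, I would first apply \Cref{local_int_det_prop} to present $I(f_{\omega_{\pi_\nu},s},v_{\pi_\nu})(I_m)$ as a finite sum of integrals of the shape \eqref{local_int_det_eq}. For each of the finitely many summands, the vectors $v_{\pi_\nu}$ and $\phi$ are fixed, so Theorem~1 of \cite{hazan2022asym} supplies an $\varepsilon>0$, depending only on $\{\pi_\nu,v_{\pi_\nu},\phi\}$, together with the finite sets $\underline{p'},\underline{p},\underline{\chi}$ describing the expansion of $(\pi_\nu(a)v_{\pi_\nu},\phi)$ on each $T_\Theta^-(\varepsilon)$. Taking the minimum of these finitely many $\varepsilon$'s, I can use a single value $\varepsilon$ uniformly across all summands produced by \Cref{local_int_det_prop}.

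Next I would use the disjoint decomposition $T^- = \bigsqcup_{\Theta\subseteq\Delta} T_\Theta^-(\varepsilon)$ to split each integral over $Z_n(k_\nu)\backslash T^-$ as a sum over $\Theta\subseteq\Delta$ of integrals over $Z_n(k_\nu)\backslash T_\Theta^-(\varepsilon)$. On each piece I substitute the expansion
\[
(\pi_\nu(\underline{t})v_{\pi_\nu},\phi)=\sum_{\underline{p'},\underline{p},\underline{\chi}}\alpha_{\underline{p'},\underline{p},\underline{\chi}}\prod_{i=1}^{r_\Theta}\chi_i(t_i)|t_i|^{p'_i}\log_q^{p_i}|t_i|,
\]
which is a \emph{finite} sum; interchanging this finite sum with the integral is harmless, and the resulting terms are exactly those of the form \eqref{local_int_det_eq2} (up to the inconsequential relabelling $\underline{p}\leftrightarrow\underline{p'}$ of the exponent and log-power indices).

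There is no real obstacle here beyond careful bookkeeping: the corollary is a direct combination of \Cref{local_int_det_prop} with \eqref{asym_exp_p_adic}. The purpose of the statement is to rewrite $I(f_{\omega_{\pi_\nu},s},v_{\pi_\nu})(I_m)$ in a form in which the dependence of the integrand on the torus variables $\underline{t}$ and on the complex parameter $s$ is completely explicit and separated, as preparation for deducing the rationality in $q_\nu^{-s}$ (and hence the meromorphic continuation) asserted in \Cref{local_int_is_pol_prop}.
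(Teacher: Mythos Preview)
Your proposal is correct and follows essentially the same approach as the paper: the corollary is stated immediately after the asymptotic expansion \eqref{asym_exp_p_adic} with the remark that it is obtained by ``applying \eqref{asym_exp_p_adic} to \Cref{local_int_det_prop}'', and your argument spells out precisely this combination (including the disjoint decomposition of $T^-$ into the pieces $T_\Theta^-(\varepsilon)$ and the substitution of the finite asymptotic sum for the matrix coefficient).
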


\begin{prop}
	The integral $I\left(f_{\omega_{\pi_\nu},s},v_{\pi_{\nu}}\right)(I_m)$ has meromorphic continuation for  $\Re(s)>>0$.
\end{prop}
\begin{proof}
	By \Cref{local_int_det_prop}, $I\left(f_{\omega_{\pi_\nu},s},v_{\pi_{\nu}}\right)(I_m)$ is a finite sum of integrals of the form
	\begin{equation*}
	\begin{split}
		\alpha\left(\tau,\phi,\phi'\right)\beta\int \limits 	_{Z_n(k_\nu)\backslash T_\Theta^-(\varepsilon)}&P(q_\nu^{-s}, q_\nu^s)\mu(\underline{t})c(X(\tilde{\varepsilon},\underline{t}))\left|\det \underline{t}\right|^{ms+\frac{m}{2}}\\&\cdot\alpha_{\underline{p},\underline{p'},\underline{\chi}}\prod_{i=1}^{r_\Theta}\chi_i(t_i)\left|t_{i}\right|^{p_{i}}\log^{p'_{i}}\left|t_{i}\right|d\underline{t}\cdot  \phi^{\prime}.
	\end{split}
	\end{equation*}
	where, $P(q_\nu^{-s}, q_\nu^s)$ is polynomial,  $c(X(\tilde{\varepsilon},\underline{t}))$ does not depend on $s$, and by \Cref{jacobian_of_kak_int_non_arch},
	\begin{equation*}
	\mu(\underline{t})=q_\nu ^{\sum_{i=1}^n(n-2i+1)r_i} \frac{\varphi_n(q^{-1})}{(1-q^{-1})^n} \prod_{j=1}^\ell \frac{(1-q^{-1})^{n_j}}{\varphi_{n_j}(q^{-1})}.
	\end{equation*}
	
	We have $||\underline{t}||=|t_1|^{-1}$. Therefore, each of the integrals above can be brought to the form
	\begin{equation*}
	\begin{split}
		\int \limits 	_{|t_1|\leq |t_2|\leq \ldots |t_{n-1}|\leq 1}&|t_1|^{1-n+ms+m/2}|t_2|^{3-n+ms+m/2}\cdots |t_{n-1}|^{n-3+ms+m/2}\\&\prod_{i=1}^{r_\Theta}\chi_i(t_i)\left|t_{i}\right|^{p_{i}}\log^{p'_{i}}\left|t_{i}\right|d^\times t_1 \cdots d^\times t_{n-1} .
	\end{split}
	\end{equation*}
	By separating variables we find that the last integral equals
	\begin{equation}\label{merom_cont_int_sep}
		\prod_{j=1}^{n-1}	\int \limits 	_{|t_j|\leq 1}\chi_j(t_j)|t_j|^{\alpha_j+ms}\log^{p'_{j}}\left|t_{j}\right|d^\times t_j.
	\end{equation}
	We can assume $\chi_i(t_i)=0$ for all $i$, otherwise the integral will be zero.
	Therefore, \cref{merom_cont_int_sep} equals
	\begin{equation*}
			\prod_{j=1}^{n-1}\sum_{i=0}^\infty iq_\nu^{-i(\alpha_j+ms)}.
	\end{equation*}
	The last sum is a derivative of a rational function, and the meromorphic continuation follows.
\end{proof}

\subsubsection{Archimedean places}
In this section we briefly provide Archimedean analogues of the results in the \Cref{nonarch_places_sec}. Let $\nu =\infty $.

\begin{prop}\label{sec_val_on_arch_det}
	Let $\underline{t}\in Z_n(k_\nu)\backslash T^-$. Then, for a standard section $f_{\omega_{\pi_\nu},s}$ we have 
	\begin{equation*}
	f_{\omega_{\pi_\nu},s}\left(\tilde{\varepsilon}   \underline{t}^\Delta\right)= c(\tilde{\omega},\underline{t})\left|\det \underline{t}\right|^{ms+\frac{m}{2}}\omega_{\pi_{\nu}}\left(1+\sum_{i=1}^{n-1}t_j^2\right)\left|1+\sum_{i=1}^{n-1}t_j^2\right|^{-mn(s+\frac{1}{2})},
	\end{equation*}
	where $c(\tilde{\omega},\underline{t})$ is the inner product of the space of $\rho_{\omega_{\pi_{\nu}},s}$ given in \cref{ind_space_inner_prod} evaluated on a matrix in $K_{mn}$ that depends on $\tilde{\omega}$ and $\underline{t}$.
\end{prop}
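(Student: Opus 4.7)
The plan is to adapt the non-Archimedean proof of Proposition \ref{sec_val_on_non_arch_det} to the Archimedean setting, where the only substantive change is that $\tilde{w}\tilde{u}(\underline{t})$ is no longer automatically contained in $K_{mn,\nu}$. I would start, exactly as there, with the identity
$$\tilde{\varepsilon}\underline{t}^\Delta = \bigl(\tilde{w}\underline{t}^\Delta\tilde{w}^{-1}\bigr)\cdot\tilde{w}\tilde{u}(\underline{t}).$$
The first factor is a permutation of the diagonal matrix $\underline{t}^\Delta$, and the same calculation as in \cref{sec_val_on_non_arch_det} shows that its $(mn,mn)$-entry is $1$ and that its contribution to $f_{\omega_{\pi_\nu},s}$ via left-$P_{mn-1,1}$-equivariance is exactly $|\det\underline{t}|^{ms+\frac{m}{2}}$, with trivial central-character twist. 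It then remains only to analyze $f_{\omega_{\pi_\nu},s}(\tilde{w}\tilde{u}(\underline{t}))$.

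In place of the ultrametric argument that placed $\tilde{w}\tilde{u}(\underline{t})$ inside $K_{mn,\nu}$, I would use the Archimedean Iwasawa decomposition $\mathrm{GL}_{mn}(k_\nu) = P_{mn-1,1}(k_\nu)\,K_{mn,\nu}$ to write
$$\tilde{w}\tilde{u}(\underline{t}) = \tilde{p}(\underline{t})\cdot\tilde{k}(\underline{t}),\qquad \tilde{p}(\underline{t}) = \begin{pmatrix}\tilde{A} & \tilde{Y}\\ 0 & \alpha\end{pmatrix}\in P_{mn-1,1}(k_\nu),\ \tilde{k}(\underline{t})\in K_{mn,\nu}.$$
To pin down $\alpha$, I would read off the last row of $\tilde{w}\tilde{u}(\underline{t})$, which is exactly the $((m-n+1)n)$-th row of $\tilde{u}(\underline{t})$, namely $(0,\dots,0,1,\underline{b_{n-1}}(\underline{t}))$. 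Since the nonzero entries of $\underline{b_{n-1}}(\underline{t})$ are precisely $t_1,\dots,t_{n-1}$ placed at specific coordinates, the squared Euclidean norm of this row is $1+\sum_{j=1}^{n-1}t_j^2$; matching with the last row $\alpha\,e_{mn}^T\tilde{k}(\underline{t})$ of $\tilde{p}(\underline{t})\tilde{k}(\underline{t})$ determines $|\alpha|_\nu$ in terms of $1+\sum t_j^2$. Because $\det\tilde{w}\tilde{u}(\underline{t}) = \pm 1$ we also have $|\det\tilde{p}(\underline{t})|_\nu = 1$, so $\delta_{P_{mn-1,1}}(\tilde{p}(\underline{t})) = |\alpha|_\nu^{-mn}$.

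Applying the left-$P_{mn-1,1}$-equivariance of $f_{\omega_{\pi_\nu},s}$ then pulls out the factor $\omega_{\pi_\nu}^{-1}(\alpha)\,\delta_{P_{mn-1,1}}^{s+\frac{1}{2}}(\tilde{p}(\underline{t}))$, which with the Archimedean normalization of $|\cdot|_\nu$ used in the paper is precisely $\omega_{\pi_\nu}(1+\sum t_j^2)\,|1+\sum t_j^2|^{-mn(s+\frac{1}{2})}$. The residual factor $f_{\omega_{\pi_\nu},s}(\tilde{k}(\underline{t}))$ is the value of the standard section at a $K_{mn,\nu}$-element depending on $\tilde{\omega}$ and on $\underline{t}$; by the $K$-finiteness relation \eqref{section_is_K_fin} and the inner-product description \eqref{ind_space_inner_prod} this is exactly the quantity $c(\tilde{\omega},\underline{t})$ in the statement, and is independent of $s$ as required for a standard section. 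The main obstacle I expect is the careful bookkeeping of the Archimedean normalizations---reconciling $\omega_{\pi_\nu}^{-1}(\alpha)\,|\alpha|_\nu^{-mn(s+\frac{1}{2})}$ with the stated factor $\omega_{\pi_\nu}(1+\sum t_j^2)\,|1+\sum t_j^2|^{-mn(s+\frac{1}{2})}$ in both the real and complex cases---after which the proposition falls out by direct assembly.
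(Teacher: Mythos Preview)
Your proposal is correct and follows the same overall route as the paper: factor $\tilde{\varepsilon}\underline{t}^\Delta=(\tilde{w}\underline{t}^\Delta\tilde{w}^{-1})\cdot\tilde{w}\tilde{u}(\underline{t})$, pull out $|\det\underline{t}|^{ms+m/2}$ from the diagonal left factor, and then Iwasawa-decompose the remaining matrix to isolate the $P_{mn-1,1}$-part and the $K$-part.

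The difference is one of economy. You compute only the Euclidean norm of the last row of $\tilde{w}\tilde{u}(\underline{t})$, which suffices to pin down $|\alpha|$ and hence $\delta_{P_{mn-1,1}}^{s+1/2}(\tilde{p}(\underline{t}))=|\alpha|^{-mn(s+1/2)}$; the residual $K$-value is then packaged as $c(\tilde{\omega},\underline{t})$. The paper instead carries out a full Gram--Schmidt orthogonalization of an associated $n\times n$ block to compute the entire Iwasawa decomposition explicitly, eventually reading off the diagonal of $y_P$ and hence $\alpha=\sqrt{\varphi_1}=\sqrt{1+\sum t_j^2}$. Your shortcut is valid and gives exactly the same factors; the paper's longer computation yields the explicit form of the $K$-factor $x_K$, information not actually needed for the statement. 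The paper also begins by writing $t_j=r_je^{i\theta_j}$ and absorbing the unitary diagonal into a $c(\underline{\theta})$-factor before proceeding with real positive $t_j$, a step you skip since the elements of $T^-$ are already real positive by definition. Your closing remark about the main obstacle being the Archimedean normalization bookkeeping (reconciling $\omega_{\pi_\nu}^{-1}(\alpha)|\alpha|^{-mn(s+1/2)}$ with the stated form) is apt and matches where the paper's proof concentrates its effort.
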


\begin{proof}
	In any case $\nu=\mathbb{R}$ or $\nu=\mathbb{C}$, for all $1\leq j\leq n-1$, we can write $t_j=r_j e^{i\theta_j}$, where $0<r_1\leq r_2\leq ...\leq r_{n-1} \leq 1$ and $-\pi\leq \theta_j<\pi$. We write
	\begin{equation*}
	\underline{t}=\mathrm{diag}(r_1,\ldots,r_{n-1},1)\mathrm{diag}(e^{i\theta_1},\ldots,e^{i\theta_{n-1}},1).
	\end{equation*}
	By \Cref{k_inv_in_bad_places} we have
	\begin{equation*}
	f_{\omega_{\pi_\nu},s}\left(\tilde{\varepsilon} \underline{t}^\Delta\right)=c(\underline{\theta})f_{\omega_{\pi_\nu},s}\left(\tilde{\varepsilon} \underline{r}^\Delta \right),
	\end{equation*}
	where $\underline{\theta}=\mathrm{diag}(e^{i\theta_1},\ldots,e^{i\theta_{n-1}},1)$ and $c(\underline{\theta})$ is a holomorphic function in $q_\nu ^{\pm s}$.
	Therefore, we assume that  $0<t_1\leq t_2\leq \ldots\leq t_{n-1} \leq 1$.
	Following the proof of \Cref{sec_val_on_non_arch_det} we get that all the arguments hold except that this time $\tilde{w}\tilde{u}(\underline{t})\notin K_{mn,\nu}$. Hence, 
	\begin{equation}\label{sec_val_arc_place}
	f_{\omega_{\pi_\nu},s}\left(\tilde{\varepsilon}   \underline{t}^\Delta\right)= \left|\det \underline{t}\right|^{ms+\frac{m}{2}}f_{\omega_{\pi_\nu},s}\left(\tilde{w} \underline{t}^\Delta\tilde{w}^{-1}\tilde{w}\right).
	\end{equation}
	We apply \Cref{k_inv_in_bad_places} again
	\begin{equation}\label{sec_val_arc_place_wo_det}
	f_{\omega_{\pi_\nu},s}\left(\tilde{w} \underline{t}^\Delta\right)=c(\tilde{w})f_{\omega_{\pi_\nu},s}\left(\tilde{w} \underline{t}^\Delta \tilde{w}^{-1}\right),
	\end{equation}
	where $c(\tilde{w})$ is a holomorphic function in $q_\nu ^{\pm s}$. Now
	\begin{equation*}
	\tilde{w} \underline{t}^\Delta \tilde{w}^{-1}=\begin{pmatrix}
	I_{mn-1}&0\\x(\underline{t})&1
	\end{pmatrix},
	\end{equation*} 
	where the last row equals
	\begin{equation}\label{last_row_befor_iwa_dec}
	(x(\underline{t}),1)=(0_{(m-n+1)n},t_{n-1}e_{n-2},\ldots,t_3e_2,t_2e_1+t_1e_n,e_n).
	\end{equation} 
	We would like to find the Iwasawa decomposition  $\tilde{w} \underline{t}^\Delta \tilde{w}^{-1}=x_Px_K$, such that $x_P\in P_{mn-1,1}(k_\nu)$ and $x_K\in K_{mn,\nu}$. This implies 
	\begin{equation}\label{section_val_on_iwa_arch}
	f_{\omega_{\pi_\nu},s}\left(\tilde{w} \underline{t}^\Delta \tilde{w}^{-1}\right)=c(x_K)\left(1\otimes \omega_{\pi_{\nu}}\right)(x_P)\delta_{P_{mn-1,1}}^{s+\frac{1}{2}}(x_P).
	\end{equation}
	Most of the rows and columns of $\tilde{w} \underline{t}^\Delta \tilde{w}^{-1}$ are already the standard orthonormal basis of $k_\nu^{mn}$. Therefore, it is sufficient to find the Iwasawa decomposition
	\begin{equation}\label{iwasawa_dec_arch}
	\begin{pmatrix}
	I_{n-1}&0\\y(\underline{t})&1
	\end{pmatrix}=y_Py_K,
	\end{equation} 
	where the last row equals
	\begin{equation}\label{last_row_before_g_sc}
	(y(\underline{t}),1)=(t_{n-1},\ldots,t_1,1).
	\end{equation}
	We denote
	\begin{equation*}
	y_P^{-1}=\begin{pmatrix}
	X_{n,n-1}&\ldots&X_{n,0}\\\vdots&&\vdots\\X_{1,n-1}&&X_{1,0}
	\end{pmatrix}, \qquad \begin{pmatrix}
	I_{n-1}&0\\y(\underline{t})&1
	\end{pmatrix}=\begin{pmatrix}
	-u_1-\\\vdots\\-u_n-
	\end{pmatrix}, \qquad y_K=\begin{pmatrix}
	-v_n-\\\vdots\\-v_1-
	\end{pmatrix}.
	\end{equation*}
	In this notation \cref{iwasawa_dec_arch} can be written as the following system of equation. For all $1\leq i \leq n$,
	\begin{equation*}
	v_i=\sum_{r=0}^{n-1}X_{i,r}u_{n-r}.
	\end{equation*}  
	The Gram–Schmidt process gives for all $1\leq i \leq n$
	\begin{equation*}
	v_i^\prime=\sum_{r=0}^{n-1}X_{i,r}^\prime u_{n-r}.
	\end{equation*}
	This provides $y_K$ by $v_i=\frac{v_i^\prime}{\left|\left|v_i^\prime\right|\right|}$, and $y_P^{-1}$ by  $X_{i,r}=\frac{X_{i,r}^\prime}{\left|\left|v_i^\prime\right|\right|}$.
	We set $v_1^\prime=u_n$. For all $1\leq i \leq n$ we denote
	\begin{equation*}
	\varphi_i=1+\sum_{j=i}^{n-1} t_j^2.
	\end{equation*}
	We note that $\varphi_1=\left|\left|u_n\right|\right|^2=\left|\left|v_1^\prime\right|\right|^2$, $\varphi_n=1$, and
	\begin{equation}\label{norm_recursive_rel}
	\varphi_i+ t_{i-1}^2=\varphi_{i-1}.
	\end{equation}
	We show by induction that for $2\leq i\leq n$ we get
	\begin{align}
	v_i^\prime&=u_{n-i+1}+\frac{t_{i-1}}{\varphi_{i-1}}\left(\sum_{r=1}^{i-2}t_ru_{n-r}-u_n\right),\label{recursive_v_i} \\ 
	\left|\left|v_i^\prime\right|\right|^2&=\frac{\varphi_i}{\varphi_{i-1}}.\label{recursive_norm_v_i}
	\end{align}
	Indeed, by applying the Gram-Schmidt process for $i=2$ we have
	\begin{equation*}
	v_2^\prime=u_{n-1}-\frac{\left<u_{n-1},v_1^\prime\right>}{\left|\left|v_1^\prime\right|\right|^2}v_1^\prime=u_{n-1}-\frac{t_1}{\varphi_1}u_n.
	\end{equation*}
	This also implies,
	\begin{equation*}
	v_2^\prime=e_{n-1}-\frac{t_1}{\varphi_1}\left(\sum_{r=1}^{n-1}t_re_{n-r}+e_n\right)=\left(1-\frac{t_1^2}{\varphi_1}\right)e_{n-1}-\frac{t_1}{\varphi_1}\left(\sum_{r=2}^{n-1}t_re_{n-r}+e_n\right).
	\end{equation*}
	So,
	\begin{equation*}
	\left|\left|v_2^\prime\right|\right|^2=\left(1-\frac{t_1^2}{\varphi_1}\right)^2+\frac{t_1^2}{\varphi_1^2}\left(\sum_{r=2}^{n-1}t_r^2+1\right)=\frac{\varphi_2^2}{\varphi_1^2}+\frac{t_1^2}{\varphi_1^2}\varphi_2=\frac{\varphi_2}{\varphi_1}\left(\frac{\varphi_2+t_1^2}{\varphi_1}\right)=\frac{\varphi_2}{\varphi_1}.
	\end{equation*}
	Let $2<i\leq n$ and assume the induction hypothesis \cref{recursive_v_i,recursive_norm_v_i} is true for all $2\leq j <i$. Gram-Schmidt process gives
	\begin{equation}\label{gram_schmidt_v_i}
	v_i^\prime=u_{n-i+1}-\frac{\left<u_{n-i+1},v_1^\prime\right>}{\left|\left|v_1^\prime\right|\right|^2}v_1^\prime-\sum_{\ell=2}^{i-1}\frac{\left<u_{n-i+1},v_\ell^\prime\right>}{\left|\left|v_\ell^\prime\right|\right|^2}v_\ell^\prime.
	\end{equation}
	Now,
	\begin{equation}\label{first_summand_g_sc}
	\frac{\left<u_{n-i+1},v_1^\prime\right>}{\left|\left|v_1^\prime\right|\right|^2}v_1^\prime=\frac{t_{i-1}}{\varphi_1}u_n,
	\end{equation}
	and by the induction hypothesis we have for all $1< \ell < i$ 
	\begin{equation*}
	\left<u_{n-i+1},v_\ell^\prime\right>=\left<u_{n-i+1},u_{n-\ell+1}\right>+\frac{t_{\ell-1}}{\varphi_{\ell-1}}\left(\sum_{r=1}^{\ell-2}t_r\left<u_{n-i+1},u_{n-r}\right>-\left<u_{n-i+1},u_n\right>\right).
	\end{equation*}
	So,
	\begin{equation}\label{in_prod_g_sc}
	\left<u_{n-i+1},v_\ell^\prime\right>=\frac{t_{\ell-1}t_{i-1}}{\varphi_{\ell-1}}.
	\end{equation}
	By the induction hypothesis, \cref{first_summand_g_sc}, and \cref{in_prod_g_sc}, we can rewrite \cref{gram_schmidt_v_i}
	\begin{equation*}
	v_i^\prime=u_{n-i+1}-\frac{t_{i-1}}{\varphi_1}u_n-\sum_{\ell=2}^{i-1}\frac{t_{\ell-1}t_{i-1}}{\varphi_{\ell}}u_{n-\ell+1}+\sum_{\ell=2}^{i-1}\frac{t_{\ell-1}^2t_{i-1}}{\varphi_{\ell}\varphi_{\ell-1}}\left(\sum_{r=1}^{\ell-2}t_ru_{n-r}-u_n\right).
	\end{equation*}
	By rearranging and changing the order of summation we have
	\begin{equation}\label{linear_comb_v_i}
	v_i^\prime=u_{n-i+1}-\left(\frac{t_{i-1}}{\varphi_1}+\sum_{\ell=2}^{i-1}\frac{t_{\ell-1}^2t_{i-1}}{\varphi_{\ell}\varphi_{\ell-1}}\right)u_n-\sum_{r=1}^{i-2}\frac{t_{r}t_{i-1}}{\varphi_{r+1}}u_{n-r}+\sum_{r=1}^{i-3}t_ru_{n-r}\sum_{\ell=r+2}^{i-1}\frac{t_{\ell-1}^2t_{i-1}}{\varphi_{\ell}\varphi_{\ell-1}}.
	\end{equation}
	
	We use \cref{linear_comb_v_i} to find the coefficients $X_{i,r}^\prime$ for all $0\leq r\leq n-1$. It is immediate to see that
	\begin{equation*}
	\begin{cases}
	X_{i,r}^\prime=0,&i\leq r\leq n-1\\
	X_{i,i-1}^\prime=1,&\\
	X_{i,i-2}^\prime=\frac{t_{i-1}t_{i-2}}{\varphi_{i-1}},&
	\end{cases}
	\end{equation*} 
	For $1\leq r \leq i-3$,
	\begin{equation}\label{coef_of_linear_comb_v_i}
	X_{i,r}^\prime = t_rt_{i-1}\left(\frac{1}{\varphi_{r+1}}+\sum_{\ell=r+2}^{i-1}\frac{t_{\ell-1}^2}{\varphi_{\ell}\varphi_{\ell-1}}\right).
	\end{equation}
	Generally, for $1\leq a\leq n-2$,
	\begin{equation}\label{varphi_a_id}
	\frac{1}{\varphi_{a}}+\frac{t_{a}^2}{\varphi_{a+1}\varphi_{a}}=\frac{1}{\varphi_{a}}\left(1+\frac{t_{a}^2}{\varphi_{a+1}}\right)=\frac{1}{\varphi_{a}}\frac{\varphi_{a}}{\varphi_{a+1}}=\frac{1}{\varphi_{a+1}}.
	\end{equation}
	Thus, the sum in the parenthesis in \cref{coef_of_linear_comb_v_i} equals $\varphi_{i-1}^{-1}$ and $X_{i,r}^\prime = t_rt_{i-1}\varphi_{i-1}^{-1}$, for all $1\leq r \leq i-3$.
	This applies to $r=0$ as well, 
	\begin{equation}\label{coef_of_linear_comb_v_i_0}
	X_{i,0}^\prime = t_{i-1}\left(\frac{1}{\varphi_{1}}+\sum_{\ell=2}^{i-1}\frac{t_{\ell-1}^2}{\varphi_{\ell}\varphi_{\ell-1}}\right)=\frac{t_{i-1}}{\varphi_{i-1}}.
	\end{equation}
	This implies that \cref{recursive_v_i} holds true for all $2\leq i\leq n$.
	We now plug $u_j=e_j$ for all $1\leq j \leq n-1$ and \cref{last_row_before_g_sc} in \cref{recursive_v_i}:
	\begin{equation*}
	v_i^\prime=e_{n-i+1}+\frac{t_{i-1}}{\varphi_{i-1}}\left(\sum_{r=1}^{i-2}t_re_{n-r}-\sum_{r=1}^{n-1}t_re_{n-r}+e_n\right),
	\end{equation*}
	and rearrange
	\begin{equation*}
	v_i^\prime=\left(1-\frac{t_{i-1}^2}{\varphi_{i-1}}\right)e_{n-i+1}-\frac{t_{i-1}}{\varphi_{i-1}}\sum_{r=i}^{n-1}t_re_{n-r}+\frac{t_{i-1}}{\varphi_{i-1}}e_n.
	\end{equation*}
	Therefore,
	\begin{equation*}
	\left|\left|v_i^\prime\right|\right|^2=\left(1-\frac{t_{i-1}^2}{\varphi_{i-1}}\right)^2+\frac{t_{i-1}^2}{\varphi_{i-1}^2}\sum_{r=i}^{n-1}t_r^2+\frac{t_{i-1}^2}{\varphi_{i-1}^2}=\frac{\varphi_{i}^2}{\varphi_{i-1}^2}+\frac{t_{i-1}^2\varphi_{i}}{\varphi_{i-1}^2}=\frac{\varphi_{i}}{\varphi_{i-1}},
	\end{equation*}
	and
	\begin{equation*}
	v_i=e_{n-i+1}-\frac{t_{i-1}}{\varphi_{i}}\sum_{r=i}^{n-1}t_re_{n-r}+\frac{t_{i-1}}{\varphi_{i}}e_n.
	\end{equation*}
	Thereby, $y_P^{-1}$ is an upper triangular matrix with diagonal 
	\begin{equation*}
	\mathrm{diag}\left(X_{n,n-1},\ldots,X_{1,0}\right)=\mathrm{diag}\left(\frac{1}{\left|\left|v_n^\prime\right|\right|},\ldots,\frac{1}{\left|\left|v_1^\prime\right|\right|}\right)=\mathrm{diag}\left(\frac{\sqrt{\varphi_{n-1}}}{\sqrt{\varphi_{n}}},\ldots,\frac{1}{\sqrt{\varphi_1}}\right).
	\end{equation*} 
	Thus, $y_P$ is also an upper triangular matrix with diagonal 
	\begin{equation*}
	\mathrm{diag}\left(\frac{\sqrt{\varphi_{n}}}{\sqrt{\varphi_{n-1}}},\ldots,\sqrt{\varphi_1}\right).
	\end{equation*} 
	Now, by \cref{last_row_befor_iwa_dec} we get that $x_P$ is an upper triangular matrix with diagonal
	\begin{equation}\label{prab_mat_iwa_arch}
	\mathrm{diag}\left(0_{(m-n+1)n},\frac{\sqrt{\varphi_{n}}}{\sqrt{\varphi_{n-1}}}e_{n-2},\ldots,\frac{\sqrt{\varphi_{4}}}{\sqrt{\varphi_{3}}}e_2,\frac{\sqrt{\varphi_{3}}}{\sqrt{\varphi_{2}}}e_1+\frac{\sqrt{\varphi_{2}}}{\sqrt{\varphi_{1}}}e_n,\sqrt{\varphi_{1}}e_n\right).
	\end{equation}
	Now we can use \cref{prab_mat_iwa_arch} to evaluate \cref{section_val_on_iwa_arch}.
	\begin{equation*}
	f_{\omega_{\pi_\nu},s}\left(\tilde{w} \underline{t}^\Delta \tilde{w}^{-1}\right)=c(x_K)\omega_{\pi_{\nu}}(\sqrt{\varphi_{1}})\left|\sqrt{\varphi_{1}}\right|^{-mn(s+\frac{1}{2})}.
	\end{equation*}
	Plugging this result to \cref{sec_val_arc_place_wo_det} and then to \cref{sec_val_arc_place} gives
	\begin{equation*}
	f_{\omega_{\pi_\nu},s}\left(\tilde{\varepsilon}   \underline{t}^\Delta\right)= \left|\det \underline{t}\right|^{ms+\frac{m}{2}}c(\tilde{w})c(x_K)\omega_{\pi_{\nu}}(\sqrt{\varphi_{1}})\left|\sqrt{\varphi_{1}}\right|^{-mn(s+\frac{1}{2})}.
	\end{equation*}
	Recall that $x_K$ is the compact part in the Iwasawa decomposition of $\tilde{w} \underline{t}^\Delta \tilde{w}^{-1}$. So, by denoting $c(\tilde{\omega},\underline{t}):=c(\underline{\theta})c(\tilde{w})c(x_K)$ the proof is done.
\end{proof}

\begin{cor}\label{local_int_det_prop_arch}
	Assume that $\Re(s)>>0$. In the notation of \Cref{local_int_split_prop,sec_val_on_non_arch_det}, $I\left(f_{\omega_{\pi_\nu},s},v_{\pi_{\nu}}\right)(I_m)$ is a finite sum of integrals of the form
	\begin{equation}\label{local_int_det_eq_arch}
	\begin{split}
		\alpha\left(\tau,\phi,\phi'\right)\int \limits 	_{Z_n(k_\nu)\backslash T^-}&P(q_\nu^{-s}, q_\nu^s)\mu(\underline{t})c(\tilde{\omega},\underline{t})\left|\det \underline{t}\right|^{ms+\frac{m}{2}}\omega_{\pi_{\nu}}\left(1+\sum_{i=1}^{n-1}t_j^2\right)\\
		&\cdot \left|1+\sum_{i=1}^{n-1}t_j^2\right|^{-mn(s+\frac{1}{2})}\left(\pi_{\nu}(\underline{t})v_{\pi_{\nu}} ,\phi\right)d\underline{t}\cdot  \phi^{\prime},
	\end{split}
	\end{equation}
	where $P(q_\nu^{-s}, q_\nu^s)$ is a holomorphic function, $\mu(\underline{t})=\mu(K_{n,\nu}\underline{t}K_{n,\nu})$, and $c$ is the inner product of the space of $\rho_{\omega_{\pi_{\nu}},s}$ given in \cref{ind_space_inner_prod}.
\end{cor}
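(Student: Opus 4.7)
The plan is to mirror the proof of \Cref{local_int_det_prop}, substituting \Cref{sec_val_on_arch_det} for its non-Archimedean counterpart \Cref{sec_val_on_non_arch_det}. First, I would invoke \Cref{local_int_split_prop} to write $I\left(f_{\omega_{\pi_\nu},s},v_{\pi_{\nu}}\right)(I_m)$ as a finite sum of integrals of the form
\begin{equation*}
\alpha\left(\tau,\phi,\phi'\right)\int\limits_{Z_n(k_\nu)\backslash\gl[k_\nu]{n}}P(q_\nu^{-s},q_\nu^s) f_{\omega_{\pi_\nu},s}\left(\tilde{\varepsilon}\,t(I_m,g_\nu)\right)\left(\pi_\nu(g_\nu)v_{\pi_\nu},\phi\right)dg_\nu\cdot\phi',
\end{equation*}
where each $f_{\omega_{\pi_\nu},s}$ is now a standard section. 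This reduction is purely formal and does not distinguish between Archimedean and non-Archimedean places.

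Next, I would introduce the Cartan decomposition $g_\nu=A\underline{t}B$ modulo the center and apply the integration formula \Cref{kak_integral} to replace the $dg_\nu$-integral by an iterated integral over $Z_n(k_\nu)\backslash T^-$ and $K_{n,\nu}\times K_{n,\nu}$, picking up the Archimedean Jacobian $\mu(\underline{t})$ from \Cref{jacobian_of_kak_int_arch}. By \Cref{section_are_bi_inv}, the value $f_{\omega_{\pi_\nu},s}(\tilde{\varepsilon}\,t(I_m,A\underline{t}B))$ is a finite sum of terms of the form $c(A,B)f_{\omega_{\pi_\nu},s}(\tilde{\varepsilon}\,t(I_m,\underline{t}))$ with $c(A,B)$ independent of $s$. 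Using the $K_{n,\nu}$-finiteness of both $v_{\pi_\nu}$ and $\phi$, the matrix coefficient $(\pi_\nu(A\underline{t}B)v_{\pi_\nu},\phi)$ can be expanded in a finite basis of translates, so that the $A,B$-integration collapses into absolutely convergent constants and the surviving torus integrand involves only $(\pi_\nu(\underline{t})v_{\pi_\nu},\phi)$ (with possibly modified $v_{\pi_\nu}$ and $\phi$ that can be absorbed into the finite sum).

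Finally, I would plug in the explicit formula from \Cref{sec_val_on_arch_det},
\begin{equation*}
f_{\omega_{\pi_\nu},s}\left(\tilde{\varepsilon}\,\underline{t}^\Delta\right)=c(\tilde{\omega},\underline{t})\left|\det\underline{t}\right|^{ms+\frac{m}{2}}\omega_{\pi_\nu}\!\left(1+\sum_{j=1}^{n-1}t_j^2\right)\left|1+\sum_{j=1}^{n-1}t_j^2\right|^{-mn(s+\frac{1}{2})},
\end{equation*}
to arrive at the desired expression \cref{local_int_det_eq_arch}. The only genuine point of departure from the non-Archimedean argument is that in \Cref{sec_val_on_arch_det} the element $\tilde{w}\tilde{u}(\underline{t})$ no longer lies in $K_{mn,\nu}$, so its Iwasawa decomposition contributes the additional weight $\omega_{\pi_\nu}(1+\sum t_j^2)\,|1+\sum t_j^2|^{-mn(s+\frac{1}{2})}$ coming from the norm $\sqrt{\varphi_1}$ on the diagonal of the parabolic factor. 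I do not expect this step to present any conceptual obstacle: the hard analytic work has already been done in \Cref{sec_val_on_arch_det}, and what remains is bookkeeping of this weight factor alongside the $K$-finiteness and Cartan-decomposition machinery reused verbatim from the $p$-adic setting.
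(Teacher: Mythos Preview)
Your proposal is correct and follows precisely the paper's own approach: the paper's proof consists of the single sentence that the argument is identical to that of \Cref{local_int_det_prop} with \Cref{sec_val_on_arch_det} replacing \Cref{sec_val_on_non_arch_det}, which is exactly what you outline (and then unpack in detail). Your added commentary on the origin of the extra weight $\omega_{\pi_\nu}(1+\sum t_j^2)\,|1+\sum t_j^2|^{-mn(s+\frac{1}{2})}$ is accurate and goes slightly beyond what the paper records.
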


\begin{proof}
	The proof is the same as the proof of \Cref{local_int_det_prop}, except that this time we apply \Cref{sec_val_on_arch_det} instead of \Cref{sec_val_on_non_arch_det}.
\end{proof}

Next, we make use of the asymptotic behavior of matrix coefficients due to \cite{casselman1980jacquet}.
Embed $T^-$ in $\mathbb{C}^\Delta$: $\underline{t}\mapsto \left(\alpha(\underline{t})\right)_{\alpha\in \Delta}$. For each $s\in\mathbb{C}^\Delta$ define functions which are single-valued on $T^-$, multivalued on the complement of coordinate hyperplanes in $\mathbb{C}^\Delta$:
\begin{equation*}
	a^s\log^c a=\prod_{\alpha\in\Delta}\alpha(a)^{s_\alpha}\log^{c_\alpha}\alpha(a),
\end{equation*}
where $c\in \mathbb{N}^\Delta \subseteq \mathbb{C}^\Delta$, the set of integer vectors in $\mathbb{C}^\Delta$.
\begin{prop*}[Casselman, 1978]
	There exist finite sets $S\subseteq \mathbb{C}^\Delta, \mathcal{M}\subseteq \mathbb{N}^\Delta$ such that for every $v\in V$, $\tilde{v}\in \tilde{V}$, there exist functions $h_{s,c}\ (s\in S, c\in \mathcal{M})$ holomorphic in $T^-$ with
	\begin{equation}\label{asym_exp_arch}
		\left<\pi(a)v,\tilde{v}\right>=\sum_{s\in S,c\in \mathcal{M}} h_{s,c}a^s\log^c a,
	\end{equation}
	for $a\in T^-$.
\end{prop*}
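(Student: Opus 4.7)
The plan is to establish that matrix coefficients of $\pi$ are solutions of a holonomic system of differential equations on $T^-$ with regular singularities along the walls, and then to invoke the classical Fuchs--Frobenius theory in several variables. First, by admissibility of $\pi$ and Schur's lemma (applied to $K$-isotypic components), the center $\mathfrak{Z}(\mathfrak{g})$ of the universal enveloping algebra acts through a finite-dimensional quotient on the $(\mathfrak{g},K)$-submodule generated by $v$. Hence there is a cofinite ideal $I\subset \mathfrak{Z}(\mathfrak{g})$ annihilating a finite family of $\mathfrak{U}(\mathfrak{g})$-translates spanning this submodule, and the corresponding vector of matrix coefficients, viewed as a function on $T^-$, satisfies the linear system of differential equations arising from $I$.

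Second, one computes the radial component of this system. Using the Iwasawa-type decomposition $\mathfrak{g}=\bar{\mathfrak{n}}\oplus\mathfrak{m}\oplus\mathfrak{a}\oplus\mathfrak{n}$ and the PBW theorem, each $Z\in I$ becomes, after pairing against $K$-finite data on both sides, a differential operator in the coordinates $x_\alpha=e^{-\alpha(H)}$ on $T^-$ (where $a=e^H$). The central analytic input, due to Casselman and Mili\v{c}i\'c, is that the resulting system has only \emph{regular} singularities along each wall $x_\alpha=0$; equivalently, it is Fuchsian in the several-variable sense.

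Third, one applies the multivariable Frobenius/Fuchs method. Near the corner $x_\alpha=0$ for all $\alpha\in\Delta$, the indicial equation has a finite set $S\subset\mathbb{C}^\Delta$ of characteristic exponents (depending only on the infinitesimal character of $\pi$), and every solution admits a convergent expansion of the form $\sum_{s,c} h_{s,c}(a)\,a^s\log^c a$, where $c$ ranges over a finite set $\mathcal{M}\subset\mathbb{N}^\Delta$ recording the multiplicities of repeated exponents, and the coefficient functions $h_{s,c}$ are determined by a holomorphic recursion, hence holomorphic on all of $T^-$. Identifying $\langle \pi(a)v,\tilde{v}\rangle$ as one component of such a solution vector then yields the claimed expansion \eqref{asym_exp_arch}.

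The main obstacle is the regular-singularity assertion in the second step. Establishing it requires an explicit computation of the radial component of $\mathfrak{Z}(\mathfrak{g})$ and a careful control of the poles that arise when one inverts the Weyl denominator $\prod_{\alpha\in\Sigma^+}(1-e^{-2\alpha(H)})$; this was the principal technical contribution of Casselman--Mili\v{c}i\'c, proven by reducing the pole analysis to rank one via Harish-Chandra's $\mathrm{SL}_2$-calculations. The remainder of the argument is essentially formal once regular singularity is in hand, so in the current paper one would simply cite the result rather than reprove it.
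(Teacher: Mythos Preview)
The paper does not prove this proposition at all; it is stated as a quotation of Casselman's 1978 result (with a citation to \cite{casselman1980jacquet}) and is used as a black box to feed into Corollary~\ref{mc_bound_arch} and Corollary~\ref{local_int_det_prop_arch2}. Your final sentence anticipates exactly this: the regular-singularity analysis is the hard part, and the paper simply cites the result rather than reproving it.

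Your sketch of the underlying argument is a faithful outline of the Casselman--Mili\v{c}i\'c approach: admissibility forces a finite-dimensional $\mathfrak{Z}(\mathfrak{g})$-action, the radial reduction produces a Fuchsian system on $T^-$, and multivariable Frobenius theory gives the expansion with finite exponent set $S$ and log-multiplicity set $\mathcal{M}$. There is nothing wrong here, but be aware that this material plays no role in the paper beyond the citation, so for the purposes of comparison there is no ``paper's proof'' to weigh your approach against.
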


Similarly to the non-Archimedean case we can deduce in particular,
\begin{cor}\label{mc_bound_arch}
	There exist $\lambda,m_a\in \bbr_+$ such that
	\begin{equation*}
	\left|\left\langle \pi(a)v,\tilde{v}\right\rangle \right| \leq \lambda ||a|| ^{m_a}.
	\end{equation*}
\end{cor}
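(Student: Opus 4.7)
The plan is to deduce the polynomial bound as a direct corollary of the asymptotic expansion \eqref{asym_exp_arch}. Since $S$ and $\mathcal{M}$ are finite, $\langle \pi(a) v, \tilde{v}\rangle$ decomposes into a finite sum of terms $h_{s,c}(a) \cdot a^s \log^c a$, so it suffices to bound each summand by $\lambda_{s,c} \|a\|^{m_{s,c}}$ and then set $\lambda = \sum_{s,c} \lambda_{s,c}$ and $m_a = \max_{s,c} m_{s,c}$.

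The first technical step is a polynomial comparison between $\|a\|$ and the simple-root values $|\alpha(a)|^{\pm 1}$ for $\alpha \in \Delta$. For the diagonal torus of $\mathrm{GL}_n$ with $a = \mathrm{diag}(a_1, \ldots, a_n)$, the simple roots are $\alpha_i(a) = a_i/a_{i+1}$, and each $|a_i/a_j|$ is a telescoping product of at most $n-1$ simple-root values. Since $\|a\|=\max_{i,j}\{|a_{ij}|, |(a^{-1})_{ij}|\}$, this yields a constant $C = C(n)$ such that $|\alpha(a)|^{\pm 1} \leq \|a\|^C$ for every $\alpha \in \Delta$, and conversely $\|a\|$ is bounded by a fixed power of $\max_\alpha |\alpha(a)|^{\pm 1}$.

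With this comparison in place, each factor in a summand of \eqref{asym_exp_arch} is straightforward to estimate. The power factor satisfies
$$|a^s| = \prod_{\alpha \in \Delta} |\alpha(a)|^{\mathrm{Re}(s_\alpha)} \leq \|a\|^{C \sum_\alpha |\mathrm{Re}(s_\alpha)|}.$$
The logarithmic factor $|\log^c a| = \prod_\alpha |\log|\alpha(a)||^{c_\alpha}$ is dominated by $C_c(1 + \log\|a\|)^{|c|}$, which is $O(\|a\|^\epsilon)$ for any $\epsilon > 0$. The holomorphic coefficients $h_{s,c}$ on $T^-$ have at most polynomial growth, as is standard in the Casselman asymptotic expansion, and their contribution can be absorbed into the exponent. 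Summing the finitely many terms, and using continuity of $\pi$ to handle the bounded part of $T^-$, yields the required bound.

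The only genuine obstacle is the polynomial comparison in the second paragraph; once that is established, the rest of the argument is mechanical bookkeeping on the finite sum provided by \eqref{asym_exp_arch}.
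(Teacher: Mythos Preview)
Your proposal is correct and follows the same approach the paper intends: the paper states this corollary immediately after Casselman's asymptotic expansion with only the remark ``Similarly to the non-Archimedean case we can deduce in particular,'' giving no further details. Your argument supplies exactly the routine bookkeeping the paper omits---bounding each term $h_{s,c}(a)\,a^s\log^c a$ in the finite sum \eqref{asym_exp_arch} by a power of $\|a\|$---so there is no substantive difference in method.
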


\begin{prop}
	The integral $I\left(f_{\omega_{\pi_\nu},s},v_{\pi_{\nu}}\right)(I_m)$ absolutely converges in  $\Re(s)>>0$.
\end{prop}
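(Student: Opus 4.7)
The plan is to adapt the non-Archimedean argument of the previous subsection, replacing the Casselman asymptotic expansion by the cruder polynomial bound of \Cref{mc_bound_arch}. By \Cref{local_int_det_prop_arch}, the integral $I\left(f_{\omega_{\pi_\nu},s},v_{\pi_{\nu}}\right)(I_m)$ is a finite sum of integrals of the form \eqref{local_int_det_eq_arch}, so it suffices to prove absolute convergence of a single such summand for $\Re(s)\gg 0$.

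I would then estimate the integrand factor by factor. The holomorphic function $P(q_\nu^{-s},q_\nu^{s})$ is bounded on vertical strips. The quantity $c(\tilde{\omega},\underline{t})$ is the value of a continuous function on the compact group $K_{mn,\nu}$, hence bounded uniformly in $\underline{t}$. The unitarity of $\omega_{\pi_\nu}$ yields $\left|\omega_{\pi_\nu}\!\left(1+\sum_j t_j^{2}\right)\right|=1$, and since every $t_j$ is a positive real with $|t_j|\leq 1$, the quantity $1+\sum_j t_j^{2}$ lies in the interval $[1,n]$, so the factor $\left|1+\sum_j t_j^{2}\right|^{-mn(s+1/2)}$ is uniformly bounded on $T^{-}$ for every fixed $\Re(s)$. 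Finally, \Cref{mc_bound_arch} bounds the matrix coefficient $(\pi_\nu(\underline{t})v_{\pi_{\nu}},\phi)$ by $\lambda\|\underline{t}\|^{m_a}=\lambda|t_1|^{-m_a}$.

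What remains is to treat the product $\mu(\underline{t})\cdot|\det\underline{t}|^{ms+m/2}$. I would substitute the formula from \Cref{jacobian_of_kak_int_arch} for $\mu(\underline{t})$, parametrize $T^{-}$ by writing $t_j=e^{-\sigma_j}$ with $\sigma_1\geq\sigma_2\geq\cdots\geq\sigma_{n-1}\geq 0$, and unwind each hyperbolic sine into exponentials. The ordering constraint on the $\sigma_j$ may be dropped since the integrand is non-negative; after separation of variables the problem reduces to a product of one-dimensional integrals of the shape $\int_0^{\infty}e^{-\alpha_j\sigma_j}\,d\sigma_j$, whose exponents $\alpha_j$ depend linearly on $\Re(s)$ with positive slope. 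For $\Re(s)$ large enough each $\Re(\alpha_j)$ becomes strictly positive, giving absolute convergence.

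The main obstacle is purely combinatorial bookkeeping: one must carefully track how the exponents contributed by $\mu(\underline{t})$, by $|\det\underline{t}|^{ms+m/2}$, and by the polynomial bound $|t_1|^{-m_a}$ from the matrix coefficient combine after separation of variables, so as to verify that every resulting exponent is positive for $\Re(s)\gg 0$. No new analytic input beyond \Cref{mc_bound_arch} and \Cref{jacobian_of_kak_int_arch} is required.
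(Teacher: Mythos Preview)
Your proposal is correct and follows essentially the same route as the paper: reduce via \Cref{local_int_det_prop_arch}, bound the matrix coefficient using \Cref{mc_bound_arch}, plug in the formula for $\mu(\underline{t})$ from \Cref{jacobian_of_kak_int_arch}, and separate variables to obtain one-dimensional integrals that converge once $\Re(s)$ is large. Your write-up is in fact a bit more explicit than the paper in isolating the harmless factors (the unitary central character, the bounded term $1+\sum_j t_j^2\in[1,n]$, and the continuous $c(\tilde{\omega},\underline{t})$), but no new idea is involved.
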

\begin{proof}
	By \Cref{local_int_det_prop_arch}, the integral $I\left(f_{\omega_{\pi_\nu},s},v_{\pi_{\nu}}\right)(I_m)$ is a finite sum of integrals of the form \cref{local_int_det_eq_arch}. By \Cref{jacobian_of_kak_int_arch} we have  $\mu(\underline{t})=\sum_{\alpha\in \Sigma _+}\left| \sinh \alpha(H)\right|^{\dim \mathfrak{g}_\alpha}$.
	In addition, $||\underline{t}||=|t_1|^{-1}$. Thus, by \Cref{mc_bound_arch}, each of the integrals above is bounded in absolute value by
	\begin{equation*}
	\begin{split}
		\int \limits 	_{|t_1|\leq |t_2|\leq \ldots |t_{n-1}|\leq 1}&\prod_{i=1}^{n-1}e^{c_i t_i}|t_1|^{1-n-m_a+ms+m/2}|t_2|^{3-n+ms+m/2}\\&\cdots |t_{n-1}|^{n-3+ms+m/2}d^\times t_1  \cdots d^\times t_{n-1}.
	\end{split}
	\end{equation*}
	By separating variables, the last integral can be brought to the form
	\begin{equation*}
	\int \limits 	_{|t_1|\leq 1}e^{c_1 t_1}|t_1|^{1-n-m_a+ms+m/2}d^\times t_1\prod_{j=2}^{n-1}	\int \limits 	_{|t_j|\leq 1}e^{c_j t_j}|t_j|^{j+1-n+ms+m/2}d^\times t_j.
	\end{equation*}
	Hence, it is absolutely convergent iff $\Re(1-n-m_a+ms+m/2)\geq 0$.
\end{proof}
Moreover, the integral $I\left(f_{\omega_{\pi_\nu},s},v_{\pi_{\nu}}\right)(I_m)$  has meromorphic continuation. In order to see that we first apply \cref{asym_exp_arch} to \Cref{local_int_det_prop_arch} and obtain:
\begin{cor}\label{local_int_det_prop_arch2}
	Assume that $\Re(s)>>0$. In the notation of \Cref{local_int_split_prop,sec_val_on_arch_det}, $I\left(f_{\omega_{\pi_\nu},s},v_{\pi_{\nu}}\right)(I_m)$ is a finite sum of integrals of the form
	\begin{equation}\label{local_int_det_eq_arch2}
	\begin{split}
	\alpha\left(\tau,\phi,\phi'\right)\int \limits 	_{Z_n(k_\nu)\backslash T^-}&P(q_\nu^{-s}, q_\nu^s)\mu(\underline{t})c(\tilde{\omega},\underline{t})\left|\det \underline{t}\right|^{ms+\frac{m}{2}}\omega_{\pi_{\nu}}\left(1+\sum_{i=1}^{n-1}t_j^2\right)\\&\cdot \left|1+\sum_{i=1}^{n-1}t_j^2\right|^{-mn(s+\frac{1}{2})}\sum_{s\in S,c\in \mathcal{M}} h_{s,c}\underline{t}^s\log^c \underline{t} d\underline{t}\cdot  \phi^{\prime},
	\end{split}
	\end{equation}
	where $P(q_\nu^{-s}, q_\nu^s)$ is a holomorphic function, $\mu(\underline{t})=\sum_{\alpha\in \Sigma _+}\left| \sinh \alpha(H)\right|^{\dim \mathfrak{g}_\alpha}$, and $c$ is the inner product of the space of $\rho_{\omega_{\pi_{\nu}},s}$ given in \cref{ind_space_inner_prod}.
\end{cor}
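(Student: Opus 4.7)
The plan is to deduce this corollary directly from the previous one, Corollary~\ref{local_int_det_prop_arch}, by substituting Casselman's asymptotic expansion for the matrix coefficient that appears in the integrand. The previous corollary has already done the work of reducing to an integral over $Z_n(k_\nu)\backslash T^-$ and of extracting the dependence on $s$ from the section via \Cref{sec_val_on_arch_det}; the only remaining factor to unpack is the matrix coefficient $\bigl(\pi_\nu(\underline{t})v_{\pi_\nu},\phi\bigr)$.

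First I would appeal to Casselman's theorem (the proposition just before the statement), which furnishes finite sets $S\subseteq\mathbb{C}^\Delta$ and $\mathcal{M}\subseteq\mathbb{N}^\Delta$, together with holomorphic functions $h_{s,c}$ on $T^-$ depending on the pair of vectors, such that
\begin{equation*}
\bigl(\pi_\nu(\underline{t})v_{\pi_\nu},\phi\bigr)=\sum_{s\in S,\,c\in\mathcal{M}} h_{s,c}\,\underline{t}^{\,s}\log^c\underline{t}
\end{equation*}
holds for all $\underline{t}\in T^-$. Since $\pi_\nu$ is unitary, the inner product pairing identifies $\phi\in V_{\pi_\nu}(\tau)$ with an element of $\widetilde{V}_{\pi_\nu}$, so Casselman's expansion is genuinely applicable. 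Moreover, because $\phi$ and $\phi'$ in \Cref{local_int_split_prop} range over a finite orthonormal basis of the finite-dimensional isotypic space $V_\pi(\tau)$, and $\tau$ itself ranges over finitely many representations, the sets $S$ and $\mathcal{M}$ can be taken uniformly across the finitely many pairs $(v_{\pi_\nu},\phi)$ that actually contribute.

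Next, I would simply insert this expansion into the integrand of \cref{local_int_det_eq_arch} provided by Corollary~\ref{local_int_det_prop_arch}. Since the sum over $(s,c)\in S\times\mathcal{M}$ is finite, one can freely interchange it with the integral over $Z_n(k_\nu)\backslash T^-$; the outer finite sum of integrals therefore becomes a (larger, but still finite) sum of integrals of precisely the form \cref{local_int_det_eq_arch2}. The only bookkeeping required is to absorb the finite sum indexed by $(s,c)$ into the finite sum already present in Corollary~\ref{local_int_det_prop_arch}, which is routine.

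There is essentially no obstacle here: the substantive content — the expression of the section on the torus (\Cref{sec_val_on_arch_det}), the reduction to an integral over $T^-$ using \Cref{local_int_split_prop} and \Cref{section_are_bi_inv}, and the validity of the Cartan integration formula (\Cref{kak_integral} with \Cref{jacobian_of_kak_int_arch}) — is already in place from the preceding corollary. The interchange of summation and integration is justified by finiteness of the sum, not by any convergence argument, so absolute convergence for $\Re(s)\gg 0$ (which is hypothesized) is not at issue at this formal step. Thus the corollary is nothing more than a reformulation of \Cref{local_int_det_prop_arch} that makes the $\underline{t}$-dependence of the matrix coefficient explicit, positioning us to analyze meromorphic continuation in the next step, exactly parallel to the non-Archimedean treatment in the previous subsection.
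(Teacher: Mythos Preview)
Your proposal is correct and matches the paper's approach exactly: the paper states this corollary immediately after Casselman's asymptotic expansion with the one-line justification that one applies \cref{asym_exp_arch} to \Cref{local_int_det_prop_arch}, which is precisely what you do. Your added remarks on identifying $\phi$ with a dual vector via unitarity and on the uniformity of $S,\mathcal{M}$ over the finitely many pairs are reasonable clarifications but not strictly needed.
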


\begin{prop}
	The integral $I\left(f_{\omega_{\pi_\nu},s},v_{\pi_{\nu}}\right)(I_m)$ has meromorphic continuation for  $\Re(s)>>0$.
\end{prop}
\begin{proof}
	By \Cref{local_int_det_prop}, $I\left(f_{\omega_{\pi_\nu},s},v_{\pi_{\nu}}\right)(I_m)$ is a finite sum of integrals of the form
	\begin{equation*}
	\alpha\left(\tau,\phi,\phi'\right)\beta\int \limits 	_{Z_n(k_\nu)\backslash T_\Theta^-(\varepsilon)}P(q_\nu^{-s}, q_\nu^s)\mu(\underline{t})c(X(\tilde{\varepsilon},\underline{t}))\left|\det \underline{t}\right|^{ms+\frac{m}{2}}\sum_{s\in S,c\in \mathcal{M}} h_{s,c}\underline{t}^s\log^c \underline{t} d\underline{t}\cdot  \phi^{\prime}.
	\end{equation*}
	where, $P(q_\nu^{-s}, q_\nu^s)$ is a holomorphic function,  $c(X(\tilde{\varepsilon},\underline{t}))$ does not depend on $s$, $\mu(\underline{t})= \sum_{\alpha\in \Sigma _+}\left| \sinh \alpha(H)\right|^{\dim \mathfrak{g}_\alpha}$ (by \Cref{jacobian_of_kak_int_arch}).
	We have $||\underline{t}||=|t_1|^{-1}$. Therefore, each of the integrals above equals
	\begin{equation*}
	\begin{split}
		\int \limits 	_{|t_1|\leq |t_2|\leq \ldots |t_{n-1}|\leq 1}&\prod_{i=1}^{n-1}e^{c_i t_i}|t_1|^{1-n+ms+m/2}|t_2|^{3-n+ms+m/2}\cdots |t_{n-1}|^{n-3+ms+m/2}\\&\cdot \prod_{i=1}^{r_\Theta}\chi_i(t_i)\left|t_{i}\right|^{p_{i}}\log^{p'_{i}}\left|t_{i}\right|d^\times t_1 d^\times t_2 \cdots d^\times t_{n-1}.
	\end{split}
	\end{equation*}
	By separating variables we find that the last integral equals
	\begin{equation}\label{merom_cont_int_sep_arch}
	\prod_{j=1}^{n-1}	\int \limits 	_{|t_j|\leq 1}e^{c_j t_j}\chi_j(t_j)|t_j|^{\alpha_j+ms}\log^{p'_{j}}\left|t_{j}\right|d^\times t_j.
	\end{equation}
	Each of the classic integrals in \cref{merom_cont_int_sep_arch} has meromorphic continuation and so does $I\left(f_{\omega_{\pi_\nu},s},v_{\pi_{\nu}}\right)(I_m)$.
\end{proof}

\section*{Acknowledgment}

I am grateful to my advisor, David Soudry, for his guidance, patience, support, and for many insightful discussions. 
I would also like to thank David Ginzburg for suggesting the integral construction in eq. (1.1), and for his kind help and advice.
My sincere gratitude goes to Solomon Friedberg and Elad Zelingher for their kind encouragement and practical suggestions.
This work was supported by the Israel science foundation.
\bibliographystyle{alpha}
\bibliography{references}

\Addresses
\end{document}